\title[Boundedness of multilinear commutators of fractional integral operators]{Some extensions of classes involving pair of weights  related to the boundedness of multilinear commutators associated to generalized fractional integral operators}
\author{}
\date{}
\theoremstyle{plain}
   \newtheorem{teo}{Theorem}
   \newtheorem{coro}[teo]{Corollary}
   \newtheorem{lema}[teo]{Lemma}
   \newtheorem{propo}[teo]{Proposition}
\theoremstyle{definition}
\theoremstyle{remark}
 \newtheorem{obs}{Remark}
\numberwithin{equation}{section}
\numberwithin{teo}{section}
\definecolor{aquamarine}{rgb}{0.5, 1.0, 0.83}
\definecolor{americanrose}{rgb}{1.0, 0.01, 0.24}
\definecolor{arsenic}{rgb}{0.23, 0.27, 0.29}
\definecolor{blizzardblue}{rgb}{0.67, 0.9, 0.93}
	\definecolor{blush}{rgb}{0.87, 0.36, 0.51}
	\definecolor{celestialblue}{rgb}{0.29, 0.59, 0.82}
	\definecolor{chocolate(web)}{rgb}{0.82, 0.41, 0.12}
\newcounter{BPR}
\begin{document}

	\author[F. Berra]{Fabio Berra}
	\address{Fabio Berra, CONICET and Departamento de Matem\'{a}tica (FIQ-UNL),  Santa Fe, Argentina.}
	\email{fberra@santafe-conicet.gov.ar}
	
	\author[G. Pradolini]{Gladis Pradolini}
	\address{Gladis Pradolini, CONICET and Departamento de Matem\'{a}tica (FIQ-UNL),  Santa Fe, Argentina.}
	\email{gladis.pradolini@gmail.com}
%
	\author[J. Recchi]{Jorgelina Recchi}
	\address{Jorgelina Recchi, Departamento de Matemática, Universidad Nacional del Sur (UNS), Instituto de Matematica (INMABB), Universidad Nacional del Sur-CONICET,  Bahía Blanca, Argentina.}
	\email{jrecchi@gmail.com}
	
	\thanks{The author were supported by CONICET, UNL, ANPCyT and UNS}
	
	\subjclass[2010]{26A33, 26D10}
	
	\keywords{Multilinear commutators, fractional operators, Lipschitz spaces, weights}

\maketitle

	\begin{abstract}
	We deal with the boundedness properties of higher order
commutators related to some generalizations of the multilinear fractional integral operator of order $m$, $I_\alpha ^m$,
from a product of weighted Lebesgue spaces into adequate weighted Lipschitz spaces, extending some previous estimates for the linear case. Our study includes two different types of commutators and sufficient conditions on the weights in order to guarantee the continuity properties described above. We also exhibit the optimal range of the parameters involved. The optimality
is understood in the sense that the parameters defining the corresponding spaces belong to a
certain region, being the weights trivial outside of it. We further show examples of weights for the class which cover the mentioned
area.
	\end{abstract}

\section{Introduction}\label{seccion: introduccion}

Many classical operators in Harmonic Analysis whose continuity properties were extensively studied, have shown to behave in a suitable way when their multilinear versions act in the corresponding multilinear spaces. For example, in \cite{L-O-P-T-T} the authors proved that both, the multilinear Calderón-Zygmund operators and their commutators with \rm{BMO} symbols, are bounded from a product of weighted Lebesgue spaces to an associated weighted Lebesgue space, with weights belonging to the $A_{\vec{p}}$ multilinear class. This article established the starting point of the weighted theory in this general context. Motivated by the result in \cite{L-O-P-T-T}, similar estimates for multilinear fractional integral operators were obtained in \cite{Moen09}, and in \cite{CX10} for their commutators with \rm{BMO} symbols. The weights involved in both articles belong to a class that generalizes those given in \cite{L-O-P-T-T} and in \cite{Muckenhoupt-Wheeden74} for the linear case.

Regarding the two-weighted theory, also in \cite{Moen09} K. Moen gave a complete discussion showing sufficient bump conditions in order to guarantee the continuity properties of the multilinear fractional integral operator acting between a wider class of Lebesgue spaces than those obtained in the one-weight theory. The obtained results generalize the linear version given, for example, in \cite{SW92} and \cite{P94pot}.

In \cite{BPR22} the authors studied the two-weighted boundedness properties of the multilinear fractional integral operator between a product of weighted Lebesgue spaces into appropiate weighted Lipschitz spaces associated to a parameter $\delta$. They characterized the classes of
weights related to this problem, showing also the optimal range of the
parameters involved. The optimality is understood in the sense that the parameters defining
the corresponding spaces belong to a certain region, being the weights trivial outside of it. These results extend the corresponding proved in \cite{Pradolini01} for the linear case.
 
Our main interest in this article is the study of the boundedness properties for higher order commutators related to some generalizations of the multilinear fractional integral operator of order $m$, $I_\alpha^m$, defined by
	\[I_\alpha^m \vec{f}(x)=\int_{(\mathbb{R}^n)^m} \frac{\prod_{i=1}^m f_i(y_i)}{(\sum_{i=1}^m|x-y_i|)^{mn-\alpha}}\,d\vec{y},\]
	where $0<\alpha<mn$, $\vec{f}=(f_1,f_2,\dots, f_m)$ and $\vec{y}=(y_1,y_2,\dots, y_m)$. These operators are given by
	\[T_\alpha^m\vec{f}(x)=\int_{(\mathbb{R}^n)^m} K_\alpha(x,\vec{y})\prod_{i=1}^m f_i(y_i)\,d\vec{y},\]
	where $K_\alpha$ satisfies certain size  and regularity conditions (see Section~\ref{seccion: preliminares}). Particularly, the size condition allows us to conclude that $|T_\alpha^m\vec{f}|\leq CI_\alpha^m \vec{g}$, where $\vec{g}=(|f_1|,\dots,|f_m|)$. This estimation guarantees the  boundedness  of $T_\alpha^m$ with different weights, between a product of  Lebesgue spaces into a related Lebesgue space when the class of multilinear weights involved is an extension of that given in \cite{Moen09} for the continuity properties of $I_\alpha^m$.
	Nevertheless, this argument cannot be applied for other spaces such that $\mathrm{BMO}$, Lipschitz or  Morrey because of the fact that they have not a growth property such as Lebesgue spaces. So, we shall focus our attention in  studying the boundedness properties of the commutators of $T_{\alpha}^m$ acting between a product of weighted Lebesgue spaces into certain weighted versions of the aforementioned spaces. In the linear case this problem was studied in \cite{PRR21} for higher order commutators including the case of the operator $I_\alpha$, which had been previously given in  \cite{Pradolini01} for the two-weighted setting in the linear context. For similar problems involving other weighted type of Lipschitz spaces see \cite{HSV}, \cite{Prado01cal} and \cite{PR}.  On the other hand, in the multilinear case and for $T_\alpha^m=I_{\alpha}^m$ (that is $K_\alpha(x,\vec{y})=(\sum_{i=1}^m|x-y_i|)^{\alpha-mn}$), generalizations of two-weighted problems can be found in \cite{BPR22} and \cite{BPR22(2)} (see also \cite{AHIV} for the unweighted problem).
	
	 In this paper we study the boundedness of commutators of fractional operators, including  $I_{\alpha}^m$, between a product of weighted Lebesgue spaces and weighted generalizations of those introduced in \cite{Pee69}. For a weight $w$, the latter are denoted by  $\mathbb{L}_w(\delta)$ and defined as the collection of locally integrable functions $f$ for which the inequality 
	\[\frac{\|w\mathcal{X}_B\|_\infty}{|B|^{1+\delta/n}}\int_B|f(x)-f_B|\,dx\leq C\]
	holds for every ball $B\subset \mathbb{R}^n$, where $f_B=|B|^{-1}\int_B f$.

	We shall consider two different types of higher order commutators of $T_\alpha^m$.
	Given an $m$-tuple of functions $\mathbf{b}=(b_1,\dots,b_m)$, where each component belongs to $L^1_{\rm{loc}}$, the first multilinear commutator we shall be dealing with,  $T_{\alpha,\mathbf{b}}^m$, is given by the expression
\[T_{\alpha,\mathbf{b}}^m\vec{f}(x)=\sum_{j=1}^m T_{\alpha,b_j}^m\vec{f}(x),\]
where $T_{\alpha,b_j}^m$ is formally defined by
\[T_{\alpha,b_j}^m\vec{f}(x)=\int_{(\mathbb{R}^n)^m}(b_j(x)-b_j(y_j))K_\alpha(x,\vec{y})\prod_{i=1}^m f_i(y_i)\,d\vec{y}.\]
On the other hand, the second type of commutator that we shall consider can be expressed as
\[\mathcal{T}_{\alpha,\mathbf{b}}^m\vec{f}(x)=\int_{(\mathbb{R}^n)^m}K_\alpha(x,\vec{y})\prod_{i=1}^m(b_i(x)-b_i(y_i))f_i(y_i)\,d\vec{y}.\]
The last two equalities above are a consequence of their definitions given in Section~\ref{seccion: preliminares}, and both commutators were introduced in \cite{PT03} and \cite{PPTT}, respectively.

We shall also be dealing with multilinear symbols with components belonging  to the classical Lipschitz spaces $\Lambda(\delta)$ (for more information see Section~\ref{seccion: preliminares}).

Let $\vec{p}=(p_1,p_2,\dots,p_m)$ be a vector of exponents such that  $1\leq p_i\leq \infty$ for every $i$. Let $\beta$, $\delta$ and $\tilde\delta$ be real constants. Given $w$, $\vec{v}=(v_1,v_2,\dots,v_m)$ and $\vec{p}$, we say that $(w,\vec{v})\in \mathbb{H}_m(\vec{p}, \beta,\tilde\delta)$ if there exists a positive constant $C$ such that the inequality
	\begin{equation*}
	    \frac{\|w\mathcal{X}_B\|_\infty}{|B|^{(\tilde \delta-\delta)/n}}\prod_{i=1}^m\left(\int_{\mathbb{R}^n}\frac{v_i^{-p_i'}}{(|B|^{1/n}+|x_B-y|)^{(n-\beta_i+\delta/m)p_i'}}\,dy\right)^{1/p_i'}\leq C
	\end{equation*}
	holds for every ball $B=B(x_B,R)$, with the obvious changes when $p_i=1$. The numbers $\beta_i$ satisfy $\sum_{i=1}^m \beta_i=\beta$ and also $0<\beta_i<n$, for every $i$ (see Section~\ref{seccion: preliminares} for further details related to these classes of weights).  
	
We shall now state our main results. From now on, $1/p=\sum_{i=1}^m1/p_i$. See Section~\ref{seccion: preliminares} for details.
	
\begin{teo}\label{teo: acotacion Lp Lipschitz para T_alpha,b (suma)}
Let $0<\alpha<mn$ and $T_\alpha^m$ be a multilinear fractional operator with kernel $K_\alpha$ satisfying \eqref{eq: condicion de tamaño} and \eqref{eq: condicion de suavidad}. Let $0<\delta<\min\{\gamma,mn-\alpha\}$, $\tilde\alpha=\alpha+\delta$ and $\vec{p}$ a vector of exponents that satisfies $p>n/\tilde\alpha$. Let  $\mathbf{b}=(b_1,\dots,b_m)$ be a vector of symbols such that $b_i\in\Lambda(\delta)$, for $1\leq i \leq m$. Let $\tilde\delta\leq \delta$ and  $(w,\vec{v})$ be a pair of weights belonging  to the class $\mathbb{H}_m(\vec{p},\tilde\alpha,\tilde\delta)$ such that $v_i^{-p_i'}\in\mathrm{RH}_m$, for every $i$ such that $1<p_i\leq \infty$. Then the  multilinear commutator $T_{\alpha,\mathbf{b}}^m$ is bounded from $\prod_{i=1}^mL^{p_i}(v_i^{p_i})$ to $\mathbb{L}_w(\tilde\delta)$, that is, there exists a positive constant $C$ such that the inequality
\[\frac{\|w\mathcal{X}_B\|_\infty}{|B|^{1+\tilde\delta/n}}\int_B |T_{\alpha,\mathbf{b}}^m\vec{f}(x)-(T_{\alpha,\mathbf{b}}^m\vec{f})_B|\,dx\leq C\prod_{i=1}^m\|f_iv_i\|_{p_i}\]
holds for every ball $B$ and every $\vec{f}$ such that $f_iv_i\in L^{p_i}$, for $1\leq i\leq m$.
\end{teo}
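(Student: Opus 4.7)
Since $T_{\alpha,\mathbf{b}}^m = \sum_{j=1}^m T_{\alpha,b_j}^m$, by the triangle inequality it is enough to prove the estimate for a single $T_{\alpha,b_j}^m$; fix such $j$. For a ball $B = B(x_B, R)$, decompose $f_i = f_i^0 + f_i^\infty$ with $f_i^0 = f_i\mathcal{X}_{2B}$; expanding the product and using the multilinearity of $T_{\alpha,b_j}^m$ splits $T_{\alpha,b_j}^m\vec{f}$ into $2^m$ pieces indexed by subsets $A \subseteq \{1,\ldots,m\}$, with $y_i$ constrained to $2B$ when $i\in A$ and to $(2B)^c$ when $i\notin A$. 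In the oscillation estimate we may subtract a different constant $c_A$ in each piece, so the task reduces to proving
\[\int_B \bigl|T_{\alpha,b_j}^m(\vec{f}^A)(x) - c_A\bigr|\,dx \leq C\,\frac{|B|^{1+\tilde\delta/n}}{\|w\mathcal{X}_B\|_\infty}\prod_{i=1}^m\|f_iv_i\|_{p_i}\]
for each $A$, with a suitable choice of $c_A$.

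For the \emph{local term} $A=\{1,\ldots,m\}$, choose $c_A=0$. Since $b_j\in\Lambda(\delta)$ and $K_\alpha$ satisfies the size condition \eqref{eq: condicion de tamaño}, for $x\in B$ and $y_j\in 2B$ one has $|b_j(x)-b_j(y_j)|\leq CR^\delta$, whence $|T_{\alpha,b_j}^m(\vec{f}^0)(x)|\leq CR^\delta I_\alpha^m(|\vec{f}^0|)(x)$. Next apply $(\sum_i|x-y_i|)^{mn-\alpha}\geq\prod_i|x-y_i|^{n-\gamma_i}$ with $\gamma_i=\beta_i-\delta/m$ (so that $\sum\gamma_i=\alpha$, with $\beta_i$ from the $\mathbb{H}_m(\vec{p},\tilde\alpha,\tilde\delta)$ condition; this is admissible thanks to $\delta<\tilde\alpha$), apply Hölder with $(p_i,p_i')$ in each $y_i$-integral to introduce the weights $v_i^{-p_i'}$, and apply Hölder in $x\in B$ with $m$ equal exponents. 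This reduces the desired bound to products of $L^m(B)$-norms of weighted fractional potentials of $v_i^{-p_i'}\mathcal{X}_{2B}$. The $\mathrm{RH}_m$ hypothesis on each $v_i^{-p_i'}$ then transfers these $L^m$-averages to $L^1$-averages, and the $\mathbb{H}_m$ condition supplies the remaining scaling: the identity $1+\tilde\delta/n=(1+\delta/n)+(\tilde\delta-\delta)/n$ shows precisely how the factor $R^\delta$ from the commutator and the factor $|B|^{(\tilde\delta-\delta)/n}/\|w\mathcal{X}_B\|_\infty$ from $\mathbb{H}_m$ combine to give the right power of $|B|$.

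For the \emph{global terms} $A\subsetneq\{1,\ldots,m\}$, take $c_A=T_{\alpha,b_j}^m(\vec{f}^A)(x_B)$ and decompose the integrand of $T_{\alpha,b_j}^m(\vec{f}^A)(x)-c_A$ via
\[\begin{split}&(b_j(x)-b_j(y_j))K_\alpha(x,\vec{y})-(b_j(x_B)-b_j(y_j))K_\alpha(x_B,\vec{y})\\&\qquad=(b_j(x)-b_j(x_B))K_\alpha(x,\vec{y})+(b_j(x_B)-b_j(y_j))\bigl(K_\alpha(x,\vec{y})-K_\alpha(x_B,\vec{y})\bigr).\end{split}\]
The first summand is treated as in the local case, but with $y_i$ ranging over $(2B)^c$ for $i\notin A$, where $|x-y_i|\sim|B|^{1/n}+|x_B-y_i|$; this produces directly the integrals appearing in the $\mathbb{H}_m$ condition. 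For the second summand, the smoothness condition \eqref{eq: condicion de suavidad} on $K_\alpha$ provides an extra decay factor $(|x-x_B|/\sum_i|x-y_i|)^\gamma$, which combined with $|b_j(x_B)-b_j(y_j)|\leq C(\sum_i|x_B-y_i|)^\delta$ yields an integrable kernel on $(2B)^c$ precisely because $\delta<\gamma$. A dyadic decomposition of $(2B)^c$ around $x_B$ then reduces the $k$-th annular contribution to a local-type estimate on the enlarged ball $2^{k+1}B$ multiplied by a geometric factor $2^{-k(\gamma-\delta)}$, and the estimate is closed by summing over $k\geq 1$.

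\emph{Main obstacle.} The delicate part is the bookkeeping that aligns the scaling of $\mathbb{H}_m(\vec{p},\tilde\alpha,\tilde\delta)$ — in particular the $\delta/m$ shift in the exponents of $v_i^{-p_i'}$ and the factor $|B|^{(\tilde\delta-\delta)/n}/\|w\mathcal{X}_B\|_\infty$ — with the natural bounds produced by the AM--GM plus Hölder decomposition of the kernel. The $\mathrm{RH}_m$ hypothesis is what bridges the gap: it converts the $L^m$-averages of $v_i^{-p_i'}$ arising when Hölder is applied with equal exponents $m$ in the $x$-integration into the $L^1$-averages that appear in $\mathbb{H}_m$. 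Additional care is required when some $p_i\in\{1,\infty\}$, where Hölder is replaced by its natural variant.
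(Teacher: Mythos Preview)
Your reduction to a single $j$ and the handling of the mixed/global pieces $A\subsetneq\{1,\dots,m\}$ are close in spirit to what the paper does (the paper subtracts an average rather than a point value, and it dispenses with the dyadic decomposition by invoking the mixed condition~\eqref{eq: condicion mezclada para sigma} directly, but these are cosmetic differences).

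The genuine gap is in the purely local piece $A=\{1,\dots,m\}$. After the AM--GM step $(\sum_i|x-y_i|)^{mn-\alpha}\ge\prod_i|x-y_i|^{n-\gamma_i}$ and H\"older in $y_i$, you are left with factors of the form
\[
\left(\int_{2B}\frac{v_i^{-p_i'}(y_i)}{|x-y_i|^{(n-\gamma_i)p_i'}}\,dy_i\right)^{1/p_i'},\qquad x\in B,
\]
and these integrals in general \emph{diverge} at $y_i=x$: convergence requires $(n-\gamma_i)p_i'<n$, i.e.\ $\gamma_i>n/p_i$, and nothing in the hypotheses forces this (the $\beta_i$ in the $\mathbb{H}_m$ condition are only constrained by $0<\beta_i<n$ and $\sum_i\beta_i=\tilde\alpha$; even $\gamma_i=\beta_i-\delta/m>0$ is not guaranteed for each individual $i$, so your ``admissible thanks to $\delta<\tilde\alpha$'' does not suffice). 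The point is that AM--GM converts the mild \emph{joint} diagonal singularity of the multilinear kernel into $m$ separate singularities, each of which can be non-integrable against the weight. Hence the ``$L^m(B)$-norms of weighted fractional potentials of $v_i^{-p_i'}\mathcal{X}_{2B}$'' that you plan to control via $\mathrm{RH}_m$ may fail to exist, and in any case $\mathrm{RH}_m$ says nothing about fractional potentials of $v_i^{-p_i'}$, only about $v_i^{-p_i'}$ itself.

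The paper avoids this entirely. It first keeps the symbol estimate in the sharper form $|b_j(x)-b_j(y_j)|\lesssim|x-y_j|^\delta$ (rather than $R^\delta$), which upgrades the kernel from order $\alpha$ to order $\tilde\alpha$ and reduces the local piece to $\int_B I_{\tilde\alpha,m}(|\vec f^{\,0}|)$. This is then controlled by Lemma~\ref{lema: acotacion local de Ialpha,m}, whose proof does \emph{not} separate the kernel but instead invokes the genuine multilinear mapping property $I_{\tilde\alpha_0,m_2}\colon\prod_j L^{r_j}\to L^q$ of Moen with carefully engineered auxiliary exponents $r_j$ depending on $p_i$ and on $m_2=\#\{i:p_i>1\}$. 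The $\mathrm{RH}_m$ hypothesis enters at exactly one place: to pass from $(\int_{2B}v_i^{-m_2p_i'})^{1/(m_2p_i')}$ back to $(\int_{2B}v_i^{-p_i'})^{1/p_i'}$, after which the local condition~\eqref{eq: condicion local} closes the estimate.
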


Concerning the commutator $\mathcal{T}_{\alpha,\mathbf{b}}^m$ we have the following result.

\begin{teo}\label{teo: acotacion Lp Lipschitz para T_alpha,b (producto)}
Let $0<\alpha<mn$ and $T_\alpha^m$ be a multilinear fractional operator with kernel $K_\alpha$ satisfying \eqref{eq: condicion de tamaño} and \eqref{eq: condicion de suavidad}. Let  $0<\delta<\min\{\gamma,(mn-\alpha)/m\}$, $\tilde\alpha=\alpha+m\delta$ and $\vec{p}$ a vector of exponents that satisfies $p>n/\tilde\alpha$. Let  $\mathbf{b}=(b_1,\dots,b_m)$ be a vector of symbols such that $b_i\in\Lambda(\delta)$, for $1\leq i \leq m$. Let $\tilde\delta\leq \delta$ and  $(w,\vec{v})$ be a pair of weights belonging  to the class $\mathbb{H}_m(\vec{p},\tilde\alpha,\tilde\delta)$ such that $v_i^{-p_i'}\in\mathrm{RH}_m$, for every $1<p_i\leq \infty$. Then the  multilinear commutator $\mathcal{T}_{\alpha,\mathbf{b}}^m$ is bounded from $\prod_{i=1}^mL^{p_i}(v_i^{p_i})$ to $\mathbb{L}_w(\tilde\delta)$, that is, there exists a positive constant $C$ such that the inequality
\[\frac{\|w\mathcal{X}_B\|_\infty}{|B|^{1+\tilde\delta/n}}\int_B |\mathcal{T}_{\alpha,\mathbf{b}}^m\vec{f}(x)-(\mathcal{T}_{\alpha,\mathbf{b}}^m\vec{f})_B|\,dx\leq C\prod_{i=1}^m\|f_iv_i\|_{p_i}\]
holds for every ball $B$ and every $\vec{f}$ such that $f_iv_i\in L^{p_i}$, for $1\leq i\leq m$.
\end{teo}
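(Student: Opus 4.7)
The strategy parallels that of Theorem~\ref{teo: acotacion Lp Lipschitz para T_alpha,b (suma)} for the sum-type commutator, with an additional combinatorial layer accounting for the product $\prod_{i=1}^m(b_i(x)-b_i(y_i))$. First I would fix a ball $B=B(x_B,R)$ and use, for each $i$, the identity
\[b_i(x)-b_i(y_i)=\bigl(b_i(x)-(b_i)_{2B}\bigr)-\bigl(b_i(y_i)-(b_i)_{2B}\bigr).\]
Expanding the product leads to
\[\mathcal{T}_{\alpha,\mathbf{b}}^m\vec{f}(x)=\sum_{S\subseteq\{1,\dots,m\}}(-1)^{m-|S|}B_S(x)\,T_\alpha^m(\vec{h}^S)(x),\]
where $B_S(x)=\prod_{i\in S}(b_i(x)-(b_i)_{2B})$, $h_i^S=f_i$ for $i\in S$, and $h_i^S=(b_i-(b_i)_{2B})f_i$ for $i\notin S$. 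Since every $b_i\in\Lambda(\delta)$, for $x\in B$ one has $|B_S(x)|\lesssim R^{|S|\delta}$ and, telescoping the product, $|B_S(x)-B_S(x_B)|\lesssim R^{|S|\delta}$.

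Next, I would split $h_i^S=h_i^{S,0}+h_i^{S,\infty}$ with $h_i^{S,0}=h_i^S\mathcal{X}_{2B}$, refining the expansion into a double sum over pairs $(S,\sigma)$, $\sigma\in\{0,\infty\}^m$. The constant in the oscillation is chosen as
\[c=\sum_{S}(-1)^{m-|S|}B_S(x_B)\,T_\alpha^m(\vec{h}^{S,\vec\infty})(x_B),\]
so that the ``global'' $(S,\vec\infty)$-pieces contribute the regularity term
\[B_S(x)\bigl[T_\alpha^m(\vec{h}^{S,\vec\infty})(x)-T_\alpha^m(\vec{h}^{S,\vec\infty})(x_B)\bigr]+\bigl[B_S(x)-B_S(x_B)\bigr]T_\alpha^m(\vec{h}^{S,\vec\infty})(x_B),\]
which is to be estimated via the smoothness condition \eqref{eq: condicion de suavidad}; the terms with $\sigma\ne\vec\infty$ are controlled directly by $I_\alpha^m$ (via the size bound \eqref{eq: condicion de tamaño}) applied to the corresponding pieces of $\vec{h}^S$, no subtraction being needed since at least one variable is localized to $2B$.

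The core calculation is that, in every sub-term, replacing $|b_i(y_i)-(b_i)_{2B}|$ by $R^\delta$ on $2B$ and by $(2^kR)^\delta$ on $2^{k+1}B\setminus 2^k B$, together with the prefactor $R^{|S|\delta}$ coming from $B_S$, yields an overall gain of $R^{m\delta}$. This is precisely the reason the correct hypothesis is $\mathbb{H}_m(\vec{p},\alpha+m\delta,\tilde\delta)$ and not $\mathbb{H}_m(\vec{p},\alpha+\delta,\tilde\delta)$ as in the sum-type case. Hölder's inequality in each $y_i$-variable, combined with $v_i^{-p_i'}\in\mathrm{RH}_m$ (which passes from a product of single-variable norms on an annulus to a joint control with an $\mathrm{RH}_m$-exponent), and the $\mathbb{H}_m$-condition itself, deliver the desired estimate for each term, while the assumption $\tilde\delta\le\delta$ ensures summability of the resulting geometric series in $k$.

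The main obstacle is the bookkeeping of the $4^m$ sub-terms indexed by pairs $(S,\sigma)$: in each of them the power of $R$ comes from a different combination of the $|S|\delta$ factor from $B_S$, the Lipschitz oscillations of the factors $b_i-(b_i)_{2B}$ on distant annuli, and, for the global piece, the kernel regularity of order $\gamma$. Verifying uniformly that every such combination fits within the single class $\mathbb{H}_m(\vec{p},\alpha+m\delta,\tilde\delta)$ is the delicate step; no individual computation is hard, but the exponents must line up simultaneously for all $(S,\sigma)$ for the argument to close.
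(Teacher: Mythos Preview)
Your route is genuinely different from the paper's. The paper never introduces $(b_i)_{2B}$: it keeps the product $\prod_i(b_i(x)-b_i(y_i))$ intact, splits only $\vec f=\vec f_1+\vec f_2$, and for the local piece bounds $\prod_i|b_i(x)-b_i(y_i)|\lesssim(\sum_i|x-y_i|)^{m\delta}$, converting $K_\alpha$ into $K_{\tilde\alpha}$ and invoking Lemma~\ref{lema: acotacion local de Ialpha,m (producto)} directly. For the remaining piece it estimates the pointwise oscillation as a kernel difference plus a product difference, the latter handled by the telescoping identity of Lemma~\ref{lema: estimacion diferencia de productos}; thus only two families $I_1^\sigma,I_2^\sigma$ appear rather than your $4^m$ terms, and no subset $S$ ever enters.

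Your outline has a concrete gap at the fully local piece $\sigma=\vec 0$. There your bounds $|B_S(x)|\lesssim R^{|S|\delta}$ and $|b_i(y_i)-(b_i)_{2B}|\lesssim R^\delta$ on $2B$ collapse every $(S,\vec0)$-term to $R^{m\delta}I_\alpha^m(|\vec f\,|\mathcal X_{(2B)^m})$, \emph{not} to $I_{\tilde\alpha}^m$; and since $R\gtrsim\sum_i|x-y_i|$ on $(2B)^m$ one has $R^{m\delta}I_\alpha^m\gtrsim I_{\tilde\alpha}^m$, the wrong direction. A direct H\"older-in-each-variable estimate then requires $\int_{2B}|x-y_i|^{-(n-\alpha_i)p_i'}v_i^{-p_i'}\,dy_i<\infty$, which diverges at $y_i=x$ unless $\alpha_i>n/p_i$; summing forces $\alpha>n/p$, strictly stronger than the assumed $p>n/\tilde\alpha$. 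The analogue of Lemma~\ref{lema: acotacion local de Ialpha,m (producto)} for $I_\alpha^m$ likewise needs $p>n/\alpha$. The fix is to resum the $\sigma=\vec 0$ terms over $S$ \emph{before} estimating (recovering $\mathcal T_{\alpha,\mathbf b}^m\vec f_1$) and then use $|b_i(x)-b_i(y_i)|\lesssim|x-y_i|^\delta$ to pass to $I_{\tilde\alpha}^m$---which is exactly the paper's move. Your ``no subtraction'' heuristic is sound for genuinely mixed $\sigma$ (the kernel is essentially constant in $x\in B$ once some $y_j\notin 2B$), but that reasoning does not cover $\sigma=\vec0$.
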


Note that the restriction on the parameter $\delta$ implicated is different in each theorem due to the nature of the considered commutators. 

When we deal with $w=\prod_{i=1}^m v_i$, which is the natural substitute of the one weight theory in the linear case, we shall say that $\vec{v}\in \mathbb{H}_m({\vec{p},\beta,\tilde\delta})$. Then we have the following lemma.

\begin{lema}\label{lema: consecuencia de pesos iguales}
     Let $0<\beta<mn$, $\tilde\delta\in\mathbb{R}$,  and $\vec{p}$ a vector of exponents. If $\vec{v}\in \mathbb{H}_m({\vec{p},\beta,\tilde\delta})$, then $\tilde\delta=\beta-n/p$. 
\end{lema}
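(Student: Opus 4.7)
My plan is to extract a lower bound for the left-hand side of the defining inequality of $\mathbb{H}_m(\vec{p},\beta,\tilde\delta)$ that scales as a specific power of $R:=|B|^{1/n}$, and then conclude from the uniform boundedness of this quantity over all balls that this power must vanish.

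The key tool is the generalized H\"older inequality with exponents $p_1',\dots,p_m'$ and aggregate exponent $r:=(m-1/p)^{-1}$, which is well defined since $\sum_{i=1}^m 1/p_i' = m-1/p = 1/r$. Setting $g_i(y):=v_i(y)^{-1}(R+|x_B-y|)^{-(n-\beta_i+\delta/m)}$, H\"older yields
\[\left(\int_{\mathbb{R}^n}\Bigl(\prod_{i=1}^m g_i(y)\Bigr)^{r}\,dy\right)^{1/r}\leq\prod_{i=1}^m\left(\int_{\mathbb{R}^n}g_i^{p_i'}\,dy\right)^{1/p_i'},\]
whose right-hand side is exactly the product of integrals in the definition of the class. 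Since $\sum_i(n-\beta_i+\delta/m)=mn-\beta+\delta$ and $w=\prod v_i$, one has $\prod_i g_i(y)=w(y)^{-1}(R+|x_B-y|)^{-(mn-\beta+\delta)}$.

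I next lower-bound the integral on the left by restricting it to $B=B(x_B,R)$, where $R+|x_B-y|\simeq R$, and using $w(y)\leq\|w\mathcal{X}_B\|_\infty$ on $B$ to get $\int_B w^{-r}\,dy\geq |B|\,\|w\mathcal{X}_B\|_\infty^{-r}$. Combining these estimates gives
\[\prod_{i=1}^m\left(\int_{\mathbb{R}^n} g_i^{p_i'}\,dy\right)^{1/p_i'}\geq c\,R^{-(mn-\beta+\delta)}|B|^{1/r}\|w\mathcal{X}_B\|_\infty^{-1},\]
and since $|B|^{1/r}=|B|^{m-1/p}\simeq R^{mn-n/p}$, the product is bounded below by $c\,R^{\beta-n/p-\delta}\|w\mathcal{X}_B\|_\infty^{-1}$ with $c$ a universal constant.

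Plugging this bound into the defining inequality for $\mathbb{H}_m(\vec{p},\beta,\tilde\delta)$ and using $|B|^{(\tilde\delta-\delta)/n}\simeq R^{\tilde\delta-\delta}$, the factors of $\|w\mathcal{X}_B\|_\infty$ cancel, leaving $c\,R^{\beta-n/p-\tilde\delta}\leq C$ for every $R>0$. Letting $R\to 0$ and $R\to\infty$ forces $\beta-n/p-\tilde\delta=0$, which is the claim. The only real obstacle is spotting that the correct way to couple the $m$ factors is through the generalized H\"older inequality with aggregate exponent $r=(m-1/p)^{-1}$; once this is in place the weight $w$ cancels out entirely and only the rigid power of $R$ survives.
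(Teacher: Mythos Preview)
Your argument is correct. The paper does not actually prove this lemma; it writes ``The proof of this result follows similar arguments as those of Theorem~1.3 in~\cite{BPR22} and we shall omit it.'' Your use of the generalized H\"older inequality to collapse the product $\prod_i \|g_i\|_{p_i'}$ into a single $L^r$ norm of $w^{-1}(R+|x_B-\cdot|)^{-(mn-\beta+\delta)}$ is precisely the mechanism that appears elsewhere in the paper (see the proof of Theorem~\ref{teo: no ejemplos para la clase H_m(p,beta,delta)}\eqref{item: teo: no ejemplos para la clase H_m(p,beta,delta) - item b}, where the same H\"older step with $\xi=m-1/p$ is used). The subsequent restriction to $B$, the pointwise bound $w^{-1}\geq \|w\mathcal{X}_B\|_\infty^{-1}$, and the scaling conclusion are all sound.

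One minor remark: your formulation implicitly assumes $r<\infty$, i.e., that not all $p_i=1$. In the borderline case $\vec{p}=(1,\dots,1)$ one has $1/r=0$ and the H\"older step becomes the trivial bound $\|\prod_i g_i\|_\infty\leq \prod_i\|g_i\|_\infty$; restricting to $B$ then gives $\|w^{-1}\mathcal{X}_B\|_\infty\geq \|w\mathcal{X}_B\|_\infty^{-1}$ and the same power of $R$ emerges. It would be worth adding one sentence to cover this case explicitly, but the argument goes through unchanged.
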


The proof of this result follows similar arguments as those of Theorem 1.3 in \cite{BPR22} and we shall omit it. As a consequence of this lemma we can prove that if $\tilde\delta<\tau=(\beta-mn)(1-1/m)+\delta/m$, then condition \eqref{eq: condicion H_m(p,alfa,delta)} can be reduced to the class $A_{\vec{p},\infty}$, defined as the collection of multilinear weights $\vec{v}=(v_1,\dots,v_m)$ for which the inequality  
\begin{equation}\label{eq: clase Ap,infinito}
[\vec{v}]_{A_{\vec{p},\infty}}=\sup_{B\subset \mathbb{R}^n} \left\|\mathcal{X}_B\prod_{i=1}^m v_i\right\|_\infty\prod_{i=1}^m\left(\frac{1}{|B|}\int_B v_i^{-p_i'}\right)^{1/p_i'}<\infty
\end{equation}
holds (see Corollary~\ref{coro: suficiencia clase Ap,infinito}).

The paper is organized as follows. In Section~\ref{seccion: preliminares} we give the main definitions required in the sequel. In Section~\ref{seccion: auxiliares} we state and prove some auxiliary results that will be useful for the proof of the main theorems, which are contained in Section~\ref{seccion: conmutador suma} and~\ref{seccion: conmutador producto}. Finally in Section~\ref{seccion: clase de pesos} we prove some properties of the class $\mathbb{H}_m(\vec{p},\beta,\tilde\delta)$ and show the optimality of the associated parameters.

\section{Preliminaries and definitions}\label{seccion: preliminares}

Throughout the paper $C$ will denote an absolute constant that may change in every occurrence. By $A\lesssim B$ we mean that there exists a positive constant $c$ such that $A\leq c B$.  We say that $A\approx B$ when $A\lesssim B$ and $B\lesssim A$. 

By $m\in \mathbb{N}$ we denote the multilinear parameter involved in our estimates. Given a set $E$, with $E^m$ we shall denote the cartesian product of $E$ $m$ times. We shall be dealing with operators given by the expression
\begin{equation}\label{eq: operador T_alpha^m}
    T_{\alpha}^m\vec{f}(x)=\int_{(\mathbb{R}^n)^m}K_\alpha(x,\vec{y})\prod_{i=1}^mf_i(y_i)\,d\vec{y},
\end{equation}
for $0<\alpha<mn$, 
where $\vec{f}=(f_1,\dots,f_m)$, $\vec{y}=(y_1,\dots,y_m)$ and $K_\alpha$ is a kernel satisfying the size condition 
\begin{equation}\label{eq: condicion de tamaño}
    |K_\alpha(x,\vec{y})|\lesssim \frac{1}{(\sum_{i=1}^m|x-y_i|)^{mn-\alpha}}
\end{equation}
and an additional smoothness condition given by
\begin{equation}\label{eq: condicion de suavidad}
    |K_\alpha(x,\vec{y})-K_\alpha(x',\vec{y})|\lesssim \frac{|x-x'|^\gamma}{(\sum_{i=1}^m|x-y_i|)^{mn-\alpha+\gamma}},
\end{equation}
for some $0<\gamma\leq 1$, whenever $\sum_{i=1}^m|x-y_i|>2|x-x'|$. It is easy to check that $T_{\alpha}^m=I_\alpha^m$ defined above, when we consider  $K_\alpha(x,\vec{y})=(\sum_{i=1}^m |x-y_i|)^{\alpha-mn}$. 

We shall introduce two versions of commutators of the operators above. For a specified linear operator $T$ and a function $b\in L^{1}_{\mathrm{loc}}$ we recall that the classical commutator $T_b$ or $[b,T]$ is given by the expression
\[[b,T]f=bTf-T(bf).\]

When we deal with multilinear functions and symbols, it will be necessary to emphasize how we proceed to perform the commutation. If $b\in L^{1}_{\mathrm{loc}}$, $T$ is a multilinear operator and $\vec{f}=(f_1,f_2\dots,f_m)$ we write 
\[[b,T]_j(\vec{f})=bT(\vec{f})-T((f_1,\dots,bf_j,\dots,f_m)),\]
that is, $[b,T]_j$ is obtained by commuting $b$ with the $j$-th entry of $\vec{f}$. 

The first version of the commutator is defined as follows. Given an $m$-tuple $\mathbf{b}=(b_1,\dots,b_m)$, with $b_i\in  L^{1}_{\mathrm{loc}}$ for every $i$, we define the multilinear commutator of $T_{\alpha}^m$ by the expression
\[T_{\alpha,\mathbf{b}}^m\vec{f}(x)=\sum_{j=1}^m T_{\alpha,b_j}^m\vec{f}(x),\]
where
\[T_{\alpha,b_j}^m\vec{f}(x)=[b_j, T_{\alpha}^m]_{j}\vec f(x).\]
 
As a consequence of \eqref{eq: operador T_alpha^m} it is not difficult to see that
\[T_{\alpha,b_j}^m\vec{f}(x)=\int_{(\mathbb{R}^n)^m}(b_j(x)-b_j(y_j))K_\alpha(x,\vec{y})\prod_{i=1}^m f_i(y_i)\,d\vec{y}.\]

We now introduce the second type of commutator of $T_{\alpha}^m$. The
 multilinear product commutator  $\mathcal{T}_{\alpha,\mathbf{b}}^m$ is defined iteratively as follows
 \[\mathcal{T}_{\alpha,\mathbf{b}}^m\vec{f} = [b_m,\dots [b_{2},[b_1, T_\alpha^m]_1]_{2}\dots ]_m\vec{f}.\]
 The expression above does not involve a simple notation, so we shall provide an alternative way to denote this commutator (see Section~\ref{seccion: auxiliares}). 
 
 By means of \eqref{eq: operador T_alpha^m} we can also obtain an integral representation for this operator (see Proposition~\ref{propo: representacion de conmutador producto} below), namely
 \[\mathcal{T}_{\alpha,\mathbf{b}}^m\vec{f}(x)=\int_{(\mathbb{R}^n)^m}K_\alpha(x,\vec{y})\prod_{i=1}^m(b_i(x)-b_i(y_i))f_i(y_i)\,d\vec{y}.\]
 
 By a weight we understand any positive and locally integrable function. 
 
 Given $\delta \in \mathbb{R}$ and a weight $w$ we say that a locally integrable function $f\in  \mathbb{L}_{w}(\delta)$ if there exists a positive constant $C$ such that the inequality 
\begin{equation}\label{eq: definicion clase Lipschitz norma inf}
\frac{\|w\mathcal{X}_B\|_\infty}{|B|^{1+\delta/n}}\int_B|f(x)-f_B|\,dx\leq C
\end{equation} 
holds for every ball $B$, where $f_B=|B|^{-1}\int_B f$. The smallest constant $C$ in \eqref{eq: definicion clase Lipschitz norma inf} will be denoted by $\|f\|_{\mathbb{L}_w(\delta)}$.
If $\delta=0$ the space $\mathbb{L}_{w}(\delta)$ coincides with a weighted version of the BMO spaces introduced in \cite{Muckenhoupt-Wheeden74}, the classical Lipschitz functions  when $0<\delta<1$ and the Morrey spaces when $-n<\delta<0$. These classes of functions were also studied in \cite{Pradolini01}. 
 
Regarding the symbols, we shall be dealing with the $\Lambda(\delta)$ Lipschitz spaces given, for $0<\delta<1$, by the collection of functions $b$ verifying 
\[|b(x)-b(y)|\le C|x-y|^{\delta}.\]
The smallest constant for this inequality to hold will be denoted by $\|b\|_{\Lambda(\delta)}$. For a given $\mathbf{b}=(b_1,\dots,b_m)$, with $b_i\in \Lambda(\delta)$ for every $1\leq i\leq m$, we define
	\[\|\mathbf{b}\|_{(\Lambda(\delta))^m}=\max_{1\leq i\leq m}\|b_i\|_{\Lambda(\delta)}.\]

 Let $S_m=\{0,1\}^m$. Given a set $B$ and $\sigma=(\sigma_1,\sigma_2,\dots,\sigma_m)\in S_m$, we define 
	\[B^{\sigma_i}=\left\{
	\begin{array}{ccl}
	B,&\textrm{ if }&\sigma_i=1\\
	\mathbb{R}^n\backslash B,&\textrm{ if }&\sigma_i=0.
	\end{array}
	\right.\]
	With the notation $\mathbf{B}^\sigma$ we will understand the cartesian product $B^{\sigma_1}\times B^{\sigma_2}\times\dots\times B^{\sigma_m}$. 
	
	For $\sigma=(\sigma_1,\sigma_2,\dots,\sigma_m)$ we also define
	\[\bar{\sigma}_i=\left\{\begin{array}{ccl}
	   1  & \mathrm{ if } & \sigma_i=0;  \\
	   0  & \mathrm{ if } & \sigma_i=1,
	\end{array}
	\right.
	\]
	for every $1\leq i\leq m$ and $|\sigma|=\sum_{i=1}^m\sigma_i$.

	We now describe the classes of weights involved in our estimates. Let $\delta$ be a fixed real constant. If $1\leq p_i\leq \infty$ for every $i$, the $m$-tuple $\vec{p}=(p_1,p_2,\dots,p_m)$   will be called a vector of exponents.  We shall also denote $1/p=\sum_{i=1}^m 1/p_i$. Let $\beta$ and $\tilde\delta$ be real constants. Given $w$, $\vec{v}=(v_1,v_2,\dots,v_m)$ and a vector of exponents $\vec{p}$, we say that $(w,\vec{v})\in \mathbb{H}_m(\vec{p}, \beta,\tilde\delta)$ if there exists a positive constant $C$ such that the inequality
	\begin{equation}\label{eq: condicion H_m(p,alfa,delta)}
	    \frac{\|w\mathcal{X}_B\|_\infty}{|B|^{(\tilde \delta-\delta)/n}}\prod_{i=1}^m\left(\int_{\mathbb{R}^n}\frac{v_i^{-p_i'}}{(|B|^{1/n}+|x_B-y|)^{(n-\beta_i+\delta/m)p_i'}}\,dy\right)^{1/p_i'}\leq C
	\end{equation}
	holds for every ball $B=B(x_B,R)$. The numbers $\beta_i$ satisfy $\sum_{i=1}^m \beta_i=\beta$ and also $0<\beta_i<n$, for every $i$, which leads to $0<\beta<mn$. We shall also see that the parameters $\tilde\delta$ and $\beta$ are related to $\delta$. When $p_i=1$ for some $i$ the integral above is understood as
	\[\left\|\frac{v_i^{-1}}{(|B|^{1/n}+|x_B-\cdot|)^{n-\beta_i+\delta/m}}\right\|_\infty.\]
	Let $\mathcal{I}_1=\{1\leq i\leq m: p_i=1\}$ and $\mathcal{I}_2=\{1\leq i\leq m: 1<p_i\leq \infty\}$. Condition~\eqref{eq: condicion H_m(p,alfa,delta)} implies that
	\begin{equation}\label{eq: condicion local}
	     \frac{\|w\mathcal{X}_B\|_\infty}{|B|^{\tilde\delta/n+1/p-\beta/n}}\prod_{i\in\mathcal{I}_1}\|v_i^{-1}\mathcal{X}_B\|_\infty\prod_{i\in\mathcal{I}_2}\left(\frac{1}{|B|}\int_B v_i^{-p_i'}\right)^{1/p_i'}\leq C
	\end{equation}
	and
	\begin{equation}\label{eq: condicion global}
	    \frac{\|w\mathcal{X}_B\|_\infty}{|B|^{(\tilde \delta-\delta)/n}}\prod_{i\in\mathcal{I}_1}\left\|\frac{v_i^{-1}\mathcal{X}_{\mathbb{R}^n\backslash B}}{|x_B-\cdot|^{n-\beta_i+\frac{\delta}{m}}}\right\|_\infty \,\prod_{i\in\mathcal{I}_2}\left(\int_{\mathbb{R}^n\backslash B}\frac{v_i^{-p_i'}}{|x_B-\cdot|^{(n-\beta_i+\frac{\delta}{m})p_i'}}\right)^{1/p_i'}\leq C.
	\end{equation}
	We shall refer to the inequalities above as the local and global conditions, respectively.

	Furthermore, given $\sigma\in S_m$ we can estimate the $i$-th factor in \eqref{eq: condicion H_m(p,alfa,delta)} depending whether $\sigma_i=0$ or $\sigma_i=1$. Thus we have that condition \eqref{eq: condicion H_m(p,alfa,delta)} implies 
	\begin{equation}\label{eq: condicion mezclada para sigma}
	\frac{\|w\mathcal{X}_B\|_\infty}{|B|^{(\tilde\delta-\delta)/n+\theta(\sigma)}}\prod_{i: \sigma_i=1}\|v_i^{-1}\mathcal{X}_B\|_{p_i'}\prod_{i:\sigma_i=0}\left\|\frac{v_i^{-p_i'}\mathcal{X}_{\mathbb{R}^n\backslash B}}{|x_B-\cdot|^{n-\beta_i+\delta/m}}\right\|_{p_i'}\leq C,
	\end{equation}
	where $\theta(\sigma)=\sum_{i:\sigma_i=1} 1-\beta_i/n+\delta/(mn)$.

We recall that a weight $w$ belongs to the \textit{reverse H\"{o}lder} class $\mathrm{RH}_s$, $1<s<\infty$, if there exists a positive constant $C$ such that the inequality
\[\left(\frac{1}{|B|}\int_B w^s\right)^{1/s}\leq \frac{C}{|B|}\int_B w\]
holds for every ball $B$ in $\mathbb{R}^n$. The smallest constant for which the inequality above holds is denoted by $[w]_{\mathrm{RH}_s}$. It is not difficult to see that $\mathrm{RH}_t\subset \mathrm{RH}_s$ whenever $1<s<t$. We say that $w\in \mathrm{RH}_{\infty}$ if 
\[\sup_B w\le  \frac{C}{|B|}\int_B w,\]
for some positive constant $C$.	

\section{Auxiliary results}\label{seccion: auxiliares}

We devote this section to state and prove some facts that will be useful in our estimates. 

The following proposition establishes an alternative way to denote the product commutator $\mathcal{T}_{\alpha,\mathbf{b}}^m$.

\begin{propo}\label{propo: representacion de conmutador producto}
Let $T_\alpha^m$ be a multilinear operator as in \eqref{eq: operador T_alpha^m} and $\mathbf{b}=(b_1,\dots,b_m)$ where $b_i\in L^1_{\mathrm{loc}}$ for $1\leq i\leq m$. Then we have that
\[\mathcal{T}_{\alpha,\mathbf{b}}^m\vec{f}(x)=\sum_{\sigma\in S_m} (-1)^{m-|\sigma|}\left(\prod_{i=1}^m b_i^{\sigma_i}(x)\right)T_{\alpha}^m(f_1b_1^{\bar\sigma_1},\dots,f_m b_m^{\bar\sigma_m})(x).\]
Furthermore, we have the integral representation
\[\mathcal{T}_{\alpha,\mathbf{b}}^m\vec{f}(x)=\int_{(\mathbb{R}^n)^m}K_\alpha(x,\vec{y})\prod_{i=1}^m(b_i(x)-b_i(y_i))f_i(y_i)\,d\vec{y}.\]
\end{propo}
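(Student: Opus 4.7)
The plan is to prove both claims by a single induction on the number of nested commutators, first establishing the integral representation and then reading off the sum formula by distributing a product of differences.

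First I would set $T_0=T_\alpha^m$ and, for $1\le k\le m$, iteratively define $T_k=[b_k,T_{k-1}]_k$, so that $T_m=\mathcal{T}_{\alpha,\mathbf{b}}^m$. I claim, by induction on $k$, that
\[
T_k\vec{f}(x)=\int_{(\mathbb{R}^n)^m}K_\alpha(x,\vec{y})\Bigl(\prod_{i=1}^{k}(b_i(x)-b_i(y_i))\Bigr)\prod_{i=1}^{m}f_i(y_i)\,d\vec{y}.
\]
The case $k=0$ is \eqref{eq: operador T_alpha^m}, with the empty product equal to $1$.

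For the inductive step, I would unfold the definition $[b_k,T_{k-1}]_k\vec{f}(x)=b_k(x)T_{k-1}(\vec{f})(x)-T_{k-1}(f_1,\dots,b_kf_k,\dots,f_m)(x)$. Applying the inductive hypothesis to each summand, pulling the function $b_k(x)$ (which depends only on $x$) inside the integral, and combining the two integrals, the $k$-th slot contributes the new factor $(b_k(x)-b_k(y_k))$ while the remaining factors are unchanged. This yields the formula for $T_k$, and setting $k=m$ gives the integral representation of $\mathcal{T}_{\alpha,\mathbf{b}}^m$.

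To obtain the sum formula, I would then expand the product of differences by distributivity: for each $i$ the factor $(b_i(x)-b_i(y_i))$ contributes either $b_i(x)$ (encoded by $\sigma_i=1$) or $-b_i(y_i)$ (encoded by $\sigma_i=0$), giving
\[
\prod_{i=1}^{m}(b_i(x)-b_i(y_i))=\sum_{\sigma\in S_m}(-1)^{m-|\sigma|}\Bigl(\prod_{i=1}^{m}b_i^{\sigma_i}(x)\Bigr)\prod_{i=1}^{m}b_i^{\bar\sigma_i}(y_i),
\]
since $\sum_i\bar\sigma_i=m-|\sigma|$. Substituting into the integral representation, interchanging the finite sum with the integral, and pulling the $x$-dependent factors out, the integral that remains in the $\sigma$-term is exactly $T_\alpha^m(f_1b_1^{\bar\sigma_1},\dots,f_mb_m^{\bar\sigma_m})(x)$, which is the stated identity.

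There is no real obstacle here: the argument is purely algebraic/formal. The only point requiring a word of care is that all manipulations (pulling $b_k(x)$ inside the integral, swapping the finite sum $\sum_\sigma$ with the integral) are valid for any $\vec{f}$ and $\mathbf{b}$ for which the relevant integrals converge absolutely, so the identities hold pointwise a.e.\ under the standing local integrability assumptions on the $b_i$ and on the $f_i$.
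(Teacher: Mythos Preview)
Your proof is correct. The integral-representation induction is exactly the paper's second induction, and your derivation of the sum formula from it is sound: expanding $\prod_i(b_i(x)-b_i(y_i))$ over $S_m$ and reading off each term as $T_\alpha^m(f_1b_1^{\bar\sigma_1},\dots,f_mb_m^{\bar\sigma_m})(x)$ is a clean algebraic step.

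The only organizational difference from the paper is that the authors prove the two claims by two separate inductions on $k$ (one establishing the sum formula directly at level $k$, the other establishing the integral formula), whereas you run a single induction to get the integral representation and then obtain the sum formula in one stroke via the distributive expansion. Your route is slightly more economical; the paper's route has the minor advantage that the sum formula is established without ever writing an integral, so it holds verbatim for any multilinear $T$ (not just one given by a kernel). Both arguments are elementary and essentially equivalent.
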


\begin{proof}
Let us introduce some notation in order to make our calculations simpler. Fix a symbol $\mathbf{b}=(b_1,\dots,b_m)$ and let $F_k$ be the operator resulting after perform $k$ iterative commutings, $1\leq k\leq m$, that is
\[F_k \vec{f} =[b_k,\dots [b_{2},[b_1, T_\alpha^m]_1]_{2}\dots ]_k \vec{f}.\]
Given $\sigma\in S_m$, let us also denote with
\[\vec{g}_{\mathbf{b},\sigma}^{\,k}=(g_1,g_2,\dots,g_m)\]
where
\[g_i=\left\{\begin{array}{ccl}
     f_ib_i^{\bar{\sigma}_i} & \mathrm{ if } & 1\leq i\leq k, \\
     f_i & \mathrm{ if } & k<i\leq m.
\end{array}
\right.
\]
We shall proceed by induction in order to show that
\begin{equation}\label{eq: propo: representacion de conmutador producto - eq1}
F_k \vec{f} = \sum_{\sigma\in S_k} (-1)^{k-|\sigma|}\left(\prod_{i=1}^k b_i^{\sigma_i}\right)T_{\alpha}^m \left(\vec{g}_{\mathbf{b},\sigma}^{\,k}\right).
\end{equation}

Notice that the case $k=1$ is immediate after performing one commutation only. Let us assume that the expression above holds for $k$ and we shall prove it for $k+1$. By the definition and the inductive hypothesis we have
\begin{align*}
    F_{k+1}\vec{f}&=[b_{k+1}, F_k]_{k+1}\vec{f}\\
    &=b_{k+1}\sum_{\sigma\in S_k} (-1)^{k-|\sigma|}\left(\prod_{i=1}^k b_i^{\sigma_i}\right)T_{\alpha}^m \left(\vec{g}_{\mathbf{b},\sigma}^{\,k}\right)\\
    &\qquad-\sum_{\sigma\in S_k} (-1)^{k-|\sigma|}\left(\prod_{i=1}^k b_i^{\sigma_i}\right)T_{\alpha}^m (f_1b_1^{\bar{\sigma}_1},\dots,f_kb_k^{\bar{\sigma}_k},f_{k+1}b_{k+1},f_{k+2},\dots,f_m).
\end{align*}
Observe that $\{\theta\in S_{k+1}\}=\{\theta\in S_{k+1}: \theta_{k+1}=1\}\cup \{\theta\in S_{k+1}: \theta_{k+1}=0\}$. By rewriting the sums above we get
\begin{align*}
    F_{k+1}\vec{f}&=\sum_{\theta\in S_{k+1}, \theta_{k+1}=1}(-1)^{k-(|\theta|-1)}\left(\prod_{i=1}^{k+1}b_i^{\theta_i}\right)T_{\alpha}^m \left(\vec{g}_{\mathbf{b},\theta}^{\,k+1}\right)\\
    &\qquad +\sum_{\theta\in S_{k+1},\theta_{k+1}=0} (-1)^{k+1-|\theta|}\left(\prod_{i=1}^{k+1} b_i^{\theta_i}\right)T_{\alpha}^m\left(\vec{g}_{\mathbf{b},\theta}^{\,k+1}\right)\\
    &= \sum_{\theta\in S_{k+1}}(-1)^{k+1-|\theta|}\left(\prod_{i=1}^{k+1}b_i^{\theta_i}\right)T_{\alpha}^m \left(\vec{g}_{\mathbf{b},\theta}^{\,k+1}\right).
\end{align*}
Therefore \eqref{eq: propo: representacion de conmutador producto - eq1} holds for every $1\leq k\leq m$, and the case $k=m$ allows us to conclude the desired estimate.

In order to show the integral representation, we shall prove that
\begin{equation}\label{eq: propo: representacion de conmutador producto - eq2}
F_k \vec{f} = \int_{(\mathbb{R}^n)^m}K_\alpha(x,\vec{y})\prod_{i=1}^k (b_i(x)-b_i(y_i))\prod_{i=1}^m f_i(y_i)\,d\vec{y}.
\end{equation}
We proceed again by induction on $k$. If $k=1$ we have that
\begin{align*}
F_1\vec{f}(x)&=b_1(x)T_\alpha^m\vec{f}(x)-T_\alpha^m((b_1f_1,f_2,\dots,f_m))(x)\\
&=\int_{(\mathbb{R}^n)^m}K_\alpha(x,\vec{y})(b_1(x)-b_1(y_1))\prod_{i=1}^mf_i(y_i)\,d\vec{y}.
\end{align*}
Now assume the representation holds for $k$ and let us prove it for $k+1$. Indeed, by using the definition of $F_k$ and the inductive hypothesis we get
\begin{align*}
    F_{k+1}\vec{f}(x)&=[b_{k+1}, F_k]_{k+1}\vec{f}(x)\\
    &=b_{k+1}(x)F_k\vec{f}(x)-F_k(f_1,\dots,f_k,f_{k+1}b_{k+1},f_{k+2},\dots,f_m)(x)\\
    &=b_{k+1}(x)\int_{(\mathbb{R}^n)^m}K_{\alpha}(x,\vec{y})\prod_{i=1}^k(b_i(x)-b_i(y_i))\prod_{i=1}^mf_i(y_i)\,d\vec{y}\\
    &\qquad - \int_{(\mathbb{R}^n)^m}K_\alpha(x,\vec{y})b_{k+1}(y_{k+1})\prod_{i=1}^k(b_i(x)-b_i(y_i))\prod_{i=1}^mf_i(y_i)\,d\vec{y}\\
    &=\int_{(\mathbb{R}^n)^m}K_\alpha(x,\vec{y})\prod_{i=1}^{k+1}(b_i(x)-b_i(y_i))\prod_{i=1}^mf_i(y_i)\,d\vec{y}.
\end{align*}
This shows that the representation holds for $k+1$ also. By putting $k=m$ we get the integral representation for $\mathcal{T}_{\alpha,\mathbf{b}}^m$.
\end{proof}

\begin{lema}\label{lema: estimacion diferencia de productos}
Let $m\in\mathbb{N}$ and $a_i,b_i$ and $c_i$ be real numbers for $1\leq i\leq m$. Then
\[\prod_{i=1}^m (a_i-b_i)-\prod_{i=1}^m (c_i-b_i)=\sum_{j=1}^m (a_j-c_j)\prod_{i<j}(a_i-b_i)\prod_{i>j}(c_i-b_i).\]
\end{lema}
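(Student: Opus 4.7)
The plan is to prove the identity by a telescoping argument, which avoids any case analysis or induction. For each $j=0,1,\dots,m$ I would define the ``hybrid'' product
\[P_j=\prod_{i=1}^{j}(a_i-b_i)\prod_{i=j+1}^{m}(c_i-b_i),\]
with the convention that empty products equal $1$. The two boundary values are precisely the two terms on the left-hand side of the claimed identity: $P_0=\prod_{i=1}^m(c_i-b_i)$ and $P_m=\prod_{i=1}^m(a_i-b_i)$. Hence
\[\prod_{i=1}^{m}(a_i-b_i)-\prod_{i=1}^{m}(c_i-b_i)=P_m-P_0=\sum_{j=1}^{m}\bigl(P_j-P_{j-1}\bigr).\]

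The second step is to compute each summand. For a fixed $j\in\{1,\dots,m\}$, the products $P_j$ and $P_{j-1}$ differ only in their $j$-th factor: $P_j$ has $(a_j-b_j)$ while $P_{j-1}$ has $(c_j-b_j)$. Factoring out the common parts yields
\[P_j-P_{j-1}=\Bigl(\prod_{i<j}(a_i-b_i)\Bigr)\bigl[(a_j-b_j)-(c_j-b_j)\bigr]\Bigl(\prod_{i>j}(c_i-b_i)\Bigr)=(a_j-c_j)\prod_{i<j}(a_i-b_i)\prod_{i>j}(c_i-b_i).\]
Summing over $j=1,\dots,m$ gives exactly the right-hand side of the statement.

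There is no real obstacle here; the only thing to be careful about is the bookkeeping on the indices (placing the $a$-factors on the left of $j$ and the $c$-factors on the right, rather than the reverse, which would change the sign convention in the formula) and the handling of the empty products for $j=1$ and $j=m$. An alternative would be induction on $m$, splitting off the last factor and applying the inductive hypothesis to a product of length $m-1$, but the telescoping approach is more transparent and directly matches the shape of the target expression.
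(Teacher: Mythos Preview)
Your telescoping argument is correct and complete. The paper, by contrast, proves the lemma by induction on $m$: it peels off the $(k+1)$-st factor, writes $(a_{k+1}-b_{k+1})=(a_{k+1}-c_{k+1})+(c_{k+1}-b_{k+1})$, and applies the inductive hypothesis to the remaining product of length $k$. Your approach is genuinely different and, as you note, more direct: the hybrid products $P_j$ make the structure of the right-hand side visible immediately, with no need for a base case or an inductive regrouping. The inductive route has the minor advantage of being the ``default'' technique a reader might expect, but your telescoping version is shorter and arguably clearer, since each summand $(a_j-c_j)\prod_{i<j}(a_i-b_i)\prod_{i>j}(c_i-b_i)$ appears in one step rather than being assembled across the induction.
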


\begin{proof}
We proceed by induction on $m$. If $m=1$ it is immediate, both sides are equal to $a_1-c_1$ since the products on the right-hand side are equal to 1.

Assume that the equality holds for $m=k$ and let us prove it for $m=k+1$. We have that
\begin{align*}
\prod_{i=1}^{k+1} (a_i-b_i)-\prod_{i=1}^{k+1} (c_i-b_i)&=(a_{k+1}-b_{k+1})\prod_{i=1}^{k} (a_i-b_i)-\prod_{i=1}^{k+1} (c_i-b_i)\\
&=(a_{k+1}-c_{k+1})\prod_{i=1}^{k} (a_i-b_i)\\
&\qquad +(c_{k+1}-b_{k+1})\left(\prod_{i=1}^{k} (a_i-b_i)-\prod_{i=1}^{k} (c_i-b_i)\right).
\end{align*}
By using the inductive hypothesis we arrive to
\begin{align*}
  \prod_{i=1}^{k+1} (a_i-b_i)-\prod_{i=1}^{k+1} (c_i-b_i)&=(a_{k+1}-c_{k+1})\prod_{i=1}^{k} (a_i-b_i)\\
  &\qquad+(c_{k+1}-b_{k+1})\sum_{j=1}^k (a_j-c_j)\prod_{i<j}(a_i-b_i)\prod_{i>j}(c_i-b_i)\\
  &=\sum_{j=1}^{k+1} (a_j-c_j)\prod_{i<j}(a_i-b_i)\prod_{i>j}(c_i-b_i),
\end{align*}
so the result also holds for $m=k+1$. This completes the proof.
\end{proof}

The next lemma establishes a useful relation between $A_{\vec{p}}$ and $A_{\vec{p},q}$ classes that we shall need in the sequel. Given $\vec{p}=(p_1,\dots,p_m)$ with $1/p=\sum_{i=1}^{m}1/{p_i}$ and  $1\leq p_i\leq \infty$ for every $i$, we say that $\vec{w}=(w_1,\dots,w_m)\in A_{\vec{p}}$ if
\[\sup_B \left(\frac{1}{|B|}\int_B \prod_{i=1}^m w_i^{p/p_i}\right)^{1/p}\prod_{i\in\mathcal{I}_1}\left\|w_i^{-1}\mathcal{X}_B\right\|_\infty\prod_{i\in \mathcal{I}_2}\left(\frac{1}{|B|}\int_B w_i^{1-p_i'}\right)^{1/p_i'}<\infty,\]
and this supremum is denoted by $[\vec{w}]_{A_{\vec{p}}}$. 

On the other hand, given $0<q<\infty$ and $\vec{p}$ as above, we say that $\vec{w}=(w_1,\dots,w_m)\in A_{\vec{p},q}$ if
\[\sup_B \left(\frac{1}{|B|}\int_B \prod_{i=1}^m w_i^q\right)^{1/q}\prod_{i\in\mathcal{I}_1}\left\|w_i^{-1}\mathcal{X}_B\right\|_\infty\prod_{i\in \mathcal{I}_2}\left(\frac{1}{|B|}\int_B w_i^{-p_i'}\right)^{1/p_i'}<\infty,\]
and this supremum is denoted by $[\vec{w}]_{A_{\vec{p},q}}$. When $q=\infty$ this class corresponds to $A_{\vec{p},\infty}$ given in \eqref{eq: clase Ap,infinito}.

\begin{lema}\label{lema: relacion entre Ap y Ap,q}
    Let $\vec{p}=(p_1,\dots,p_m)$ be a vector of exponents, $0<q<\infty$ and $\vec{w}=(w_1,\dots,w_m)$. Assume that
    \begin{equation}\label{eq: lema: relacion entre Ap y Ap,q - eq1}
        \frac{1}{p_i}+\frac{1}{mq}-\frac{1}{mp}>0
    \end{equation}
    for every $1\leq i\leq m$.
    We define $\lambda_m=1/(mp)'+1/(mq)$,
    \[\ell_i=\left\{\begin{array}{ccl}
        1 & \textrm{ if } & i\in \mathcal{I}_1  \\
        (\lambda_m p_i')' & \textrm{ if } &  i\in \mathcal{I}_2, 
    \end{array}\right.\]
    $\ell$ such that $1/\ell=\sum_{i=1}^m 1/\ell_i$ and $\vec{z}=(z_1,\dots,z_m)$ where $z_i=w_i^{q\ell_i/\ell}$, for each $i$. Then $\vec{w}\in A_{\vec{p},q}$ if and only if $\vec{z}\in A_{\vec{\ell}}$.
\end{lema}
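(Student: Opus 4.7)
\medskip

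\noindent\textbf{Proof plan.} The strategy is purely computational: raise the $A_{\vec{\ell}}$ condition for $\vec{z}$ to an appropriate power and verify, factor by factor, that it reproduces exactly the $A_{\vec{p},q}$ condition for $\vec{w}$. The central algebraic identity to establish is
\[\ell=q\lambda_m,\qquad\text{equivalently}\qquad \frac{1}{\ell}=\frac{1}{q\lambda_m}.\]
To see this, I would first note that for $i\in\mathcal I_2$ one has $\ell_i'=\lambda_m p_i'$, hence $1/\ell_i=1-1/(\lambda_m p_i')$, while $\ell_i=1$ (and $1/\ell_i=1$) for $i\in\mathcal I_1$. Summing,
\[\frac{1}{\ell}=m-\frac{1}{\lambda_m}\sum_{i\in\mathcal I_2}\frac{1}{p_i'}=m-\frac{1}{\lambda_m}\Bigl(m-\frac{1}{p}\Bigr),\]
where I used $\sum_{i\in\mathcal I_2}1/p_i=1/p-|\mathcal I_1|$. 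Expanding $\lambda_m=1-1/(mp)+1/(mq)$ gives $m(\lambda_m-1)=1/q-1/p$, and a short manipulation of the previous display then yields $1/\ell=1/(q\lambda_m)$.

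Second, I would verify that $\vec{z}$ is admissible for the $A_{\vec{\ell}}$ class, i.e.\ that $\ell_i\geq 1$ for every $i$. This amounts to $\lambda_m p_i'>1$ for $i\in\mathcal I_2$, which unwinds precisely to the hypothesis $1/p_i+1/(mq)-1/(mp)>0$ in \eqref{eq: lema: relacion entre Ap y Ap,q - eq1}.

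Third, with $\ell=q\lambda_m$ in hand, I would raise the candidate $A_{\vec{\ell}}$ quantity for $\vec{z}$ to the power $\ell/q=1/\lambda_m$ and match factors. Since $z_i^{\ell/\ell_i}=w_i^q$, the average factor transforms as
\[\Bigl(\tfrac{1}{|B|}\int_B\prod_{i=1}^m z_i^{\ell/\ell_i}\Bigr)^{1/\ell\cdot\ell/q}=\Bigl(\tfrac{1}{|B|}\int_B\prod_{i=1}^m w_i^q\Bigr)^{1/q}.\]
For $i\in\mathcal I_1$, $z_i=w_i^{q/\ell}$ gives $\|z_i^{-1}\mathcal X_B\|_\infty^{\ell/q}=\|w_i^{-1}\mathcal X_B\|_\infty$. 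For $i\in\mathcal I_2$, using $\ell_i(\lambda_m p_i'-1)=\lambda_m p_i'$ one computes
\[z_i^{1-\ell_i'}=w_i^{q\ell_i(1-\lambda_m p_i')/\ell}=w_i^{-p_i'},\]
and the exponent on the outside satisfies $\ell/(q\ell_i')=1/(\lambda_m p_i'\lambda_m^{-1})\cdot\lambda_m^{-1}\cdot\lambda_m=1/p_i'$, so
\[\Bigl(\tfrac{1}{|B|}\int_B z_i^{1-\ell_i'}\Bigr)^{(1/\ell_i')(\ell/q)}=\Bigl(\tfrac{1}{|B|}\int_B w_i^{-p_i'}\Bigr)^{1/p_i'}.\]
Taking the supremum over balls gives $[\vec{z}]_{A_{\vec{\ell}}}^{\ell/q}=[\vec{w}]_{A_{\vec{p},q}}$, which establishes the two-sided equivalence and completes the proof.

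The only real obstacle is bookkeeping: getting the identity $\ell=q\lambda_m$ and correctly tracking the exponent $\ell/q=1/\lambda_m$ through the four types of factors ($L^q$ average, $L^\infty$-norm for $i\in\mathcal I_1$, and $L^{p_i'}$ average for $i\in\mathcal I_2$). Once that algebra is in place, the rest is mechanical.
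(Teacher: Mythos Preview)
Your proposal is correct and follows essentially the same route as the paper: both arguments establish the key identity $1/\ell=1/(q\lambda_m)$ via $1/\ell=m-(m-1/p)/\lambda_m$, check that \eqref{eq: lema: relacion entre Ap y Ap,q - eq1} guarantees $\ell_i>1$ for $i\in\mathcal I_2$, verify $z_i^{1-\ell_i'}=w_i^{-p_i'}$ and $1/\ell_i'=q/(\ell p_i')$, and conclude that the $A_{\vec\ell}$ expression for $\vec z$ is the $q/\ell$-th power of the $A_{\vec p,q}$ expression for $\vec w$. Your write-up is slightly more explicit about the $i\in\mathcal I_1$ factor, but the argument is the same.
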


\begin{proof}
We first notice that condition \eqref{eq: lema: relacion entre Ap y Ap,q - eq1} guarantees that $\ell_i>1$ for $i\in\mathcal{I}_2$. It is immediate from the definition that
\[\prod_{i=1}^m z_i^{\ell/\ell_i}=\prod_{i=1}^m w_i^q.\]
Observe that
\[\frac{1}{\ell}=\sum_{i=1}^m\frac{1}{\ell_i}=\sum_{i\in \mathcal{I}_1}1+\sum_{i\in\mathcal{I}_2}\frac{1}{(\lambda_mp_i')'}=m-\frac{1}{\lambda_m}\left(m-\frac{1}{p}\right)=\frac{1}{q\lambda_m}.\]
Also notice that for $i\in\mathcal{I}_2$ 
\[z_i^{1-\ell_i'}=w_i^{q\ell_i(1-\ell_i')/\ell}=w_i^{-\ell_i'/\lambda_m}=w_i^{-p_i'}\]
and
\[\frac{1}{\ell_i'}=\frac{1}{\lambda_mp_i'}=\frac{q}{\ell p_i'}.\]
These identities imply that
\[\left(\frac{1}{|B|}\int_B \prod_{i=1}^m z_i^{\ell/\ell_i}\right)^{1/\ell}\prod_{i\in\mathcal{I}_1}\left\|z_i^{-1}\mathcal{X}_B\right\|_\infty\prod_{i\in \mathcal{I}_2}\left(\frac{1}{|B|}\int_B z_i^{1-\ell_i'}\right)^{1/\ell_i'}\]
can be rewritten as
\[ \left[\left(\frac{1}{|B|}\int_B \prod_{i=1}^m w_i^q\right)^{1/q}\prod_{i\in\mathcal{I}_1}\left\|w_i^{-1}\mathcal{X}_B\right\|_\infty\prod_{i\in \mathcal{I}_2}\left(\frac{1}{|B|}\int_B w_i^{-p_i'}\right)^{1/p_i'}\right]^{q/\ell},\]
from where the equivalence follows.
\end{proof}

\begin{obs}
When $m=1$ condition \eqref{eq: lema: relacion entre Ap y Ap,q - eq1} trivially holds, and we get that $w\in A_{p,q}$ if and only if $w^q\in A_{1+q/p'}$, a well-known relation between $A_p$ and $A_{p,q}$ classes.
\end{obs}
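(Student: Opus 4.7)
The plan is to specialize Lemma~\ref{lema: relacion entre Ap y Ap,q} to the case $m=1$ and carry out the resulting arithmetic. First, when $m=1$ the vector $\vec{p}$ has a single component $p_1=p$, so condition \eqref{eq: lema: relacion entre Ap y Ap,q - eq1} reads $1/p+1/q-1/p=1/q>0$, which holds automatically from the hypothesis $0<q<\infty$. Hence the assumptions of the lemma reduce to nothing beyond $w$ being a weight and $0<q<\infty$, and we are free to apply it.

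Next I would compute the parameters that the lemma produces when $m=1$. We get $\lambda_1=1/p'+1/q$, so
\[\lambda_1 p'=1+\frac{p'}{q},\]
and therefore
\[\ell_1=(\lambda_1 p')'=\frac{1+p'/q}{p'/q}=1+\frac{q}{p'}.\]
Since there is only one factor, $1/\ell=1/\ell_1$, i.e. $\ell=\ell_1=1+q/p'$. The corresponding weight is $z_1=w^{q\ell_1/\ell}=w^q$. (If $p=1$, one reads $\ell_1=1$ directly from the definition and the classical convention $A_1$ instead of $A_{1+q/p'}$, which I would note in passing.)

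Finally I would invoke Lemma~\ref{lema: relacion entre Ap y Ap,q}, which gives $\vec{w}\in A_{\vec{p},q}$ if and only if $\vec{z}\in A_{\vec\ell}$. In the one-dimensional situation the multilinear classes $A_{\vec p,q}$ and $A_{\vec\ell}$ coincide with the classical $A_{p,q}$ and $A_\ell$ classes (the products in their definitions collapse to a single factor and the exponent $p/p_i=1$ becomes trivial). Substituting the values computed above yields exactly
\[w\in A_{p,q}\iff w^q\in A_{1+q/p'},\]
which is the classical Muckenhoupt–Wheeden identification. No step here is really an obstacle; the only point requiring a bit of care is verifying that in the definitions of $A_{\vec p}$ and $A_{\vec p,q}$ given just before the lemma, the single-weight case truly reproduces the classical $A_p$ and $A_{p,q}$ conditions, which is a direct inspection.
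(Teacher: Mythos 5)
Your proposal is correct and follows exactly the route the paper intends: the remark is simply Lemma~\ref{lema: relacion entre Ap y Ap,q} specialized to $m=1$, and your computations ($\lambda_1=1/p'+1/q$, $\ell_1=(\lambda_1p')'=1+q/p'$, $z_1=w^q$) together with the observation that the multilinear classes collapse to the classical $A_{p,q}$ and $A_\ell$ conditions are precisely what is needed. The parenthetical treatment of the case $p=1$ (where $1+q/p'=1$, so the statement reads $w^q\in A_1$) is a welcome extra check that the paper leaves implicit.
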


\section{Proof of Theorem~\ref{teo: acotacion Lp Lipschitz para T_alpha,b (suma)}}\label{seccion: conmutador suma}

We devote this section to prove Theorem~\ref{teo: acotacion Lp Lipschitz para T_alpha,b (suma)}. We shall begin with an auxiliary lemma that will be useful for this purpose.

\begin{lema}\label{lema: acotacion local de Ialpha,m}
Let $0<\alpha<mn$, $0<\delta<mn-\alpha$, $\tilde\alpha=\alpha+\delta$ and $\vec{p}$ a vector of exponents that satisfies $p>n/\tilde\alpha$. Let $\tilde\delta\leq \delta$ and  $(w,\vec{v})$ be a pair of weights belonging  to the class $\mathbb{H}_m(\vec{p},\tilde\alpha,\tilde\delta)$ such that $v_i^{-p_i'}\in\mathrm{RH}_m$ for every $i\in\mathcal{I}_2$. Then there exists a positive constant $C$ such that for every ball $B$ and every $\vec{f}$ such that $f_iv_i\in L^{p_i}$, $1\leq i\leq m$, we have that
\[\int_B |I_{\tilde\alpha,m}\vec{g}(x)|\,dx\leq C\frac{|B|^{1+\tilde\delta/n}}{\|w\mathcal{X}_B\|_\infty}\prod_{i=1}^m\|f_iv_i\|_{p_i},\]
where $\vec{g}=(f_1\mathcal{X}_{2B},f_2\mathcal{X}_{2B},\dots,f_m\mathcal{X}_{2B})$. 
\end{lema}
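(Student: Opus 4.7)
The plan is to reduce $I_{\tilde\alpha}^m$ pointwise to a product of linear fractional integrals, then apply Hölder's inequality twice—once in the inner variables to decouple the weights, and once in the outer variable to decouple the factors—closing each factor with the weighted $L^{p_i}$ norms and the reverse Hölder bump. Starting from the size condition \eqref{eq: condicion de tamaño} and the elementary bound $\sum_{j=1}^m |x-y_j|\ge|x-y_i|$, for any choice of numbers $\beta_1,\dots,\beta_m$ with $\sum\beta_i=\tilde\alpha$ and $0<\beta_i<n$ we get the pointwise estimate
\[|I_{\tilde\alpha}^m\vec g(x)|\lesssim \prod_{i=1}^m I_{\beta_i}(|f_i|\mathcal{X}_{2B})(x).\]
Integrating over $B$ and invoking the extended Hölder inequality with common exponent $r_i=m$ (so that $\sum 1/r_i=1$) reduces the problem to estimating $\|I_{\beta_i}(|f_i|\mathcal{X}_{2B})\|_{L^m(B)}$ for each $i$.

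For $i\in\mathcal{I}_2$, I would split $f_i=(f_iv_i)v_i^{-1}$ in the $y$-variable via Hölder with exponents $(p_i,c_i)$, where $c_i=mp_i'$, invoke the unweighted strong-type boundedness $I_{\beta_i}\colon L^{a_i}(\mathbb{R}^n)\to L^{q_i}(\mathbb{R}^n)$ with $1/a_i=1/p_i+1/c_i$ and $1/q_i=1/a_i-\beta_i/n$, then use the nesting $\|\cdot\|_{L^m(B)}\le|B|^{1/m-1/q_i}\|\cdot\|_{L^{q_i}}$ (valid when $q_i\ge m$), and finally the hypothesis $v_i^{-p_i'}\in\mathrm{RH}_m$ to bump
\[\|v_i^{-1}\mathcal{X}_{2B}\|_{c_i}\lesssim |B|^{1/c_i-1/p_i'}\|v_i^{-1}\mathcal{X}_{2B}\|_{p_i'}.\]
Bookkeeping of the $|B|$-powers telescopes to yield
\[\|I_{\beta_i}(f_i\mathcal{X}_{2B})\|_{L^m(B)}\lesssim |B|^{1/m-1+\beta_i/n}\|f_iv_i\|_{p_i}\|v_i^{-1}\mathcal{X}_{2B}\|_{p_i'}.\]
For $i\in\mathcal{I}_1$ I would first factor out $\|v_i^{-1}\mathcal{X}_{2B}\|_\infty$ pointwise, then apply the weak-type endpoint $I_{\beta_i}\colon L^1\to L^{n/(n-\beta_i),\infty}$ combined with the Kolmogorov-type embedding $\|\cdot\|_{L^m(B)}\le |B|^{1/m-(n-\beta_i)/n}\|\cdot\|_{L^{n/(n-\beta_i),\infty}}$ to get the same $|B|$-power $|B|^{1/m-1+\beta_i/n}$ multiplied by $\|v_i^{-1}\mathcal{X}_{2B}\|_\infty\|f_iv_i\|_1$.

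Multiplying the $m$ estimates yields the total exponent $\sum_i(1/m-1+\beta_i/n)=1-m+\tilde\alpha/n$, so that
\[\int_B|I_{\tilde\alpha}^m\vec g|\lesssim |B|^{1-m+\tilde\alpha/n}\Bigl(\prod_{i\in\mathcal{I}_1}\|v_i^{-1}\mathcal{X}_{2B}\|_\infty\Bigr)\Bigl(\prod_{i\in\mathcal{I}_2}\|v_i^{-1}\mathcal{X}_{2B}\|_{p_i'}\Bigr)\prod_{i=1}^m\|f_iv_i\|_{p_i}.\]
Applying the local consequence \eqref{eq: condicion local} of $(w,\vec v)\in\mathbb{H}_m(\vec p,\tilde\alpha,\tilde\delta)$ on $2B$ bounds the weight products by $C|B|^{\tilde\delta/n+m-\tilde\alpha/n}/\|w\mathcal{X}_{2B}\|_\infty$, producing the factor $|B|^{1+\tilde\delta/n}/\|w\mathcal{X}_{2B}\|_\infty$; monotonicity $\|w\mathcal{X}_B\|_\infty\le\|w\mathcal{X}_{2B}\|_\infty$ then delivers the claimed estimate.

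The main obstacle is choosing the $\beta_i$'s compatibly with all constraints: they must satisfy $\sum\beta_i=\tilde\alpha$, lie in $(0,n)$, and meet the nesting requirements $\beta_i\ge n(m-1)/(mp_i)$ for $i\in\mathcal{I}_2$ and $\beta_i>n(m-1)/m$ for $i\in\mathcal{I}_1$. Summing these lower bounds (and using $1/p_i=1$ for $i\in\mathcal{I}_1$) gives the necessary inequality $\tilde\alpha>n(m-1)/(mp)$, which is strictly weaker than the hypothesis $p>n/\tilde\alpha$ because $(m-1)/m<1$; the excess can be distributed among the $\beta_i$'s while keeping each strictly below $n$. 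This is precisely where the assumptions $p>n/\tilde\alpha$ and $v_i^{-p_i'}\in\mathrm{RH}_m$ enter in an essential way—the former provides the slack for the choice of $\beta_i$'s, while the latter produces the bump from $c_i=p_i'$ to $c_i=mp_i'$ which makes the fractional integral mapping into $L^{q_i}$ with $q_i\ge m$ strong-type rather than weak-type.
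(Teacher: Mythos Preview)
Your factorization into a product of linear fractional integrals is a natural idea and the bookkeeping of exponents is correct, but there is a genuine gap in the constraint analysis at the end. For the strong-type bound $I_{\beta_i}\colon L^{a_i}\to L^{q_i}$ you invoke when $i\in\mathcal{I}_2$, you need not only $q_i\ge m$ (which gives the lower bound $\beta_i\ge n(m-1)/(mp_i)$ you state) but also $q_i<\infty$, i.e.\ $\beta_i<n/a_i$. Since $a_i>1$ this is a \emph{strictly} tighter upper bound than the $\beta_i<n$ you assume. Summing these true upper bounds gives $\sum_i U_i=n+n(m-1)/(mp)$, and the hypothesis $\tilde\alpha>n/p$ does \emph{not} force $\tilde\alpha$ below this threshold. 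Concretely, take $m=2$ and $p_1=p_2=\infty$: then $a_i=2$, each $\beta_i$ must lie below $n/2$, so $\tilde\alpha<n$ is forced---yet the lemma allows any $\tilde\alpha\in(0,2n)$.

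The fix is straightforward but missing from your sketch: when $\beta_i>n/a_i$ you can bypass the Hardy--Littlewood--Sobolev step entirely and estimate $I_{\beta_i}(|f_i|\mathcal{X}_{2B})(x)$ pointwise on $B$ by H\"older in $y$, since $\int_{2B}|x-y|^{(\beta_i-n)a_i'}\,dy$ converges precisely when $\beta_i>n/a_i$; this yields the same power $|B|^{1/m-1+\beta_i/n}$ after the $\mathrm{RH}_m$ bump. With this extra case in hand the admissible range for each $\beta_i$ with $i\in\mathcal{I}_2$ becomes essentially $[n(m-1)/(mp_i),n)$ and the feasibility argument goes through. The paper avoids this complication by keeping the multilinear structure intact: it strips off the $p_i=1$ and $p_i=\infty$ factors, passes to a multilinear fractional integral $I_{\tilde\alpha_0,m_2}$ on the remaining coordinates, and invokes Moen's multilinear bound $I_{\tilde\alpha_0,m_2}\colon\prod L^{r_j}\to L^q$ in one stroke, with separate elementary arguments for the edge cases $m_2\le 1$.
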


\begin{proof}
We shall follow similar lines as in the proof of Lemma 3.1 in \cite{BPR22}. We include a sketch for the sake of completeness. 

We shall split the set $\mathcal I_2$ into $\mathcal I_2^1$ and $\mathcal I_2^2$ where
		\[\mathcal{I}_2^1=\{i\in \mathcal{I}_2 : 1<p_i<\infty\}\quad \textrm{ and } \quad\mathcal{I}_2^2=\{i \in \mathcal{I}_2: p_i=\infty\}.\]
		Let $m_i=\#\mathcal{I}_i$, for $i=1,2$ and $m_2^j=\#\mathcal{I}_2^j$, also for $j=1,2$. Then $m=m_1+m_2=m_1+m_2^1+m_2^2$. Then, by denoting $\tilde B=2B$, for $x\in B$ we have that
		\begin{align*}
		|I_{\tilde\alpha,m}\vec{g}(x)|&\leq \int_{\tilde B^m }\frac{\prod_{i=1}^m|f_i(y_i)|}{(\sum_{i=1}^m|x-y_i|)^{mn-\tilde\alpha}}\,d\vec{y}\\
		&\leq \left(\prod_{i\in\mathcal{I}_2^2}\|f_iv_i\|_\infty\right)\int_{\tilde B^m}\frac{\prod_{i\in \mathcal{I}_1\cup\mathcal{I}_2^1}|f_i(y_i)|\prod_{i\in \mathcal{I}_2^2}v_i^{-1}(y_i)}{(\sum_{i=1}^m|x-y_i|)^{mn-\tilde\alpha}}\,d\vec{y}\\
		&\leq \left(\prod_{i\in\mathcal{I}_2^2}\|f_iv_i\|_\infty\right)\left(\prod_{i\in \mathcal{I}_1}\|f_i\mathcal{X}_{\tilde B}\|_1\right) \int_{\tilde B^{m_2}}\frac{\prod_{i\in \mathcal{I}_2^1}|f_i(y_i)|\prod_{i\in \mathcal{I}_2^2}v_i^{-1}(y_i)}{(\sum_{i\in \mathcal{I}_2}|x-y_i|)^{mn-\tilde\alpha}}\,d\vec{y}\\
		&=\left(\prod_{i\in\mathcal{I}_2^2}\|f_iv_i\|_\infty\right)\left(\prod_{i\in \mathcal{I}_1}\|f_i\mathcal{X}_{\tilde B}\|_1\right)I(x,B).
		\end{align*}
		Since $p>n/\tilde\alpha$ we have that
		\[\tilde\alpha>n/p=n\sum_{i=1}^m\frac{1}{p_i}=m_1n+\frac{n}{p^*},\]
		where $1/p^*=\sum_{i\in\mathcal{I}_2}1/p_i$. Then we can split $\tilde\alpha=\tilde\alpha^1+\tilde\alpha^2$, where $\tilde\alpha^1>m_1n$ and $\tilde\alpha^2>n/p^*$. Therefore
		\[mn-\tilde\alpha=m_2n-\tilde\alpha^2+m_1n-\tilde\alpha^1.\]
		Let us sort the sets $\mathcal{I}_2^1$ and $\mathcal{I}_2^2$ increasingly, so
		\[\mathcal{I}_2^1=\left\{i_1,i_2,\dots,i_{m_2^1}\right\} \quad \textrm{ and } \quad \mathcal{I}_2^2=\left\{i_{m_2^1+1},i_{m_2^1+2},\dots,i_{m_2}\right\}.\]
		We now define $\vec{g}=(g_1,\dots,g_{m_2})$, where
		\[g_j=\left\{\begin{array}{ccl}
		|f_{i_j}|&\textrm{ if }&1\leq j\leq m_2^1;\\
		v_{i_j}^{-1}&\textrm{ if }&m_2^1+1\leq j\leq m_2.
		\end{array}
		\right.
		\]
		Then we can proceed in the following way
		\begin{align*}
		I(x,B)&\lesssim \int_{\tilde B^{m_2}}\frac{\prod_{i\in \mathcal{I}_2^1}|f_i(y_i)|\prod_{i\in \mathcal{I}_2^2}v_i^{-1}(y_i){(\sum_{i\in \mathcal{I}_2}|x-y_i|)^{{\tilde\alpha^1-nm_1}}}}{(\sum_{i\in \mathcal{I}_2}|x-y_i|)^{m_2n-\tilde\alpha^2}}\,d\vec{y}\\
		&\lesssim |\tilde B|^{\tilde\alpha^1/n-m_1}\int_{\tilde B^{m_2}}\frac{\prod_{i\in \mathcal{I}_2^1}|f_i(y_i)|\prod_{i\in \mathcal{I}_2^2}v_i^{-1}(y_i)}{(\sum_{i\in \mathcal{I}_2}|x-y_i|)^{m_2n-\tilde\alpha^2}}\,d\vec{y}\\
		&=|\tilde B|^{\tilde\alpha^1/n-m_1}\int_{\tilde B^{m_2}}\frac{\prod_{j=1}^{m_2}g_j(y_{i_j})}{(\sum_{j=1}^{m_2}|x-y_{i_j}|)^{m_2n-\tilde\alpha^2}}\,d\vec{y}\\
		&\lesssim |\tilde B|^{\tilde\alpha^1/n-m_1}I_{\tilde\alpha^2,m_2}(\vec{g}\mathcal{X}_{\tilde B^{m_2}})(x).
		\end{align*}
		Next we define the vector of exponents $\vec{r}=(r_1,\dots,r_{m_2})$ in the following way
		\[r_j=\left\{\begin{array}{ccr}
		m_2p_{i_j}/(m_2-1+p_{i_j})&\textrm{ if }&1\leq j\leq m_2^1;\\
		m_2&\textrm{ if }&m_2^1+1\leq j\leq m_2.
		\end{array}
		\right.
		\]
		This definition yields
		\begin{align*}
		\frac{1}{r}&=\sum_{j=1}^{m_2}\frac{1}{r_j}=\sum_{j=1}^{m_2^1}\left(\frac{1}{m_2}+\frac{m_2-1}{m_2p_{i_j}}\right)+\sum_{j=m_2^1+1}^{m_2}\frac{1}{m_2}=\frac{m_2^1}{m_2}+\frac{m_2-1}{m_2{p^*}}+\frac{m_2^2}{m_2}=1+\frac{m_2-1}{m_2p^*}.
		\end{align*}
		
		Notice that $1/r>1/p^*$ and  also  $n/p^*<\tilde\alpha^2$ by construction. Then there exists an auxiliary number $\tilde\alpha_0$ such that $n/p^*<\tilde\alpha_0<n/r$. Indeed, if $\tilde\alpha^2<n/r$ we can directly pick $\tilde\alpha_0=\tilde\alpha^2$. Otherwise $\tilde\alpha_0<\tilde\alpha^2$. We shall first assume that $m_2\geq 2$. We set
		\[\frac{1}{q}=\frac{1}{r}-\frac{\tilde\alpha_0}{n}.\]
		Then $0<1/q<1$ since
		\[\frac{1}{r}-1=\left(1-\frac{1}{m_2}\right)\frac{1}{p^*}<\frac{1}{p^*}<\frac{\tilde\alpha_0}{n}.\]
				By using the well-known continuity property $I_{\tilde\alpha_0,m_2}\colon \prod_{j=1}^{m_2} L^{r_j}\to L^q$ with respect to the Lebesgue measure (see, for example, \cite{Moen09}) we obtain 
		\begin{align*}
		\int_B I(x,B)\,dx&\lesssim |\tilde B|^{\tilde\alpha^1/n-m_1+(\tilde\alpha^2-\tilde\alpha_0)/n}\left(\int_B|I_{\tilde\alpha_0,m_2}(\vec{g}\mathcal{X}_{\tilde B^{m_2}})(x)|^q\,dx\right)^{1/q}|B|^{1/q'}\\
		&\lesssim |\tilde B|^{(\tilde\alpha-\tilde\alpha_0)/n-m_1+1/q'}\left(\int_{\mathbb{R}^n}|I_{\tilde\alpha_0,m_2}(\vec{g}\mathcal{X}_{\tilde B^{m_2}})(x)|^q\,dx\right)^{1/q}\\
		&\lesssim |\tilde B|^{(\tilde\alpha-\tilde\alpha_0)/n-m_1+1/q'}\prod_{j=1}^{m_2}\|g_j\mathcal{X}_{\tilde B}\|_{r_j}.
		\end{align*}
		Observe that $r_j<p_{i_j}$ for every $1\leq j\leq m_2^1$. Since $v_i^{-p_i'}\in \mathrm{RH}_m\subseteq \mathrm{RH}_{m_2}$ for every $i\in\mathcal{I}_2$, by applying H\"{o}lder inequality and then the reverse Hölder condition on these weights we get
		\begin{align*}
		\prod_{j=1}^{m_2}\|g_j\mathcal{X}_{\tilde B}\|_{r_j}&=\prod_{i\in\mathcal{I}_2^1}\left(\int_{\tilde B}|f_i|^{r_i}v_i^{r_i}v_i^{-r_i}\right)^{1/r_i}\prod_{i\in \mathcal{I}_2^2}\left(\int_{\tilde B}v_i^{-m_2}\right)^{1/m_2}\\
		&\leq \prod_{i\in\mathcal{I}_2^1}\|f_iv_i\|_{p_i}\left(\int_{\tilde B} v_i^{-m_2p_i'}\right)^{1/(m_2p_i')}\prod_{i\in \mathcal{I}_2^2}\left(\int_{\tilde B}v_i^{-m_2}\right)^{1/m_2}\\
		&\leq |\tilde B|^{m_2^1/m_2-1/(m_2p^*)+m_2^2/m_2}\prod_{i\in\mathcal{I}_2^1}\left[v_i^{-p_i'}\right]_{\mathrm{RH}_{m_2}}\|f_iv_i\|_{p_i}\left(\frac{1}{|\tilde B|}\int_{\tilde B} v_i^{-p_i'}\right)^{1/p_i'}\\
		&\quad \times \prod_{i\in \mathcal{I}_2^2}\left[v_i^{-1}\right]_{\mathrm{RH}_{m_2}}\left(\frac{1}{|\tilde B|}\int_{\tilde B}v_i^{-1}\right)\\
		&\lesssim |\tilde B|^{1-1/(m_2p^*)}\prod_{i\in\mathcal{I}_2^1}\|f_iv_i\|_{p_i}\left(\frac{1}{|\tilde B|}\int_{\tilde B} v_i^{-p_i'}\right)^{1/p_i'}\prod_{i\in \mathcal{I}_2^2}\left(\frac{1}{|\tilde B|}\int_{\tilde B}v_i^{-1}\right).
		\end{align*}
		By combining the estimates above with condition \eqref{eq: condicion local}, we finally arrive to
		\begin{align*}
		\int_B |I_{\tilde\alpha,m}\vec{g}(x)|\,dx&\leq \left(\prod_{i\in\mathcal{I}_2^2}\|f_iv_i\|_\infty\right)\left(\prod_{i\in \mathcal{I}_1}\|f_i\mathcal{X}_{\tilde B}\|_1\right)\int_B I(x,B)\,dx\\
		&\lesssim \left(\prod_{i\in\mathcal{I}_2^2}\|f_iv_i\|_\infty\right)\left(\prod_{i\in \mathcal{I}_1}\|f_i\mathcal{X}_{\tilde B}\|_1\right)|\tilde B|^{(\tilde\alpha-\tilde\alpha_0)/n-m_1+1/q'+1-1/(m_2p^*)}\\
		&\quad \times \prod_{i\in\mathcal{I}_2^1}\|f_iv_i\|_{p_i}\left(\frac{1}{|\tilde B|}\int_{\tilde B} v_i^{-p_i'}\right)^{1/p_i'}\prod_{i\in \mathcal{I}_2^2}\left(\frac{1}{|\tilde B|}\int_{\tilde B}v_i^{-1}\right)\\
		&\lesssim \left(\prod_{i=1}^m \|f_iv_i\|_{p_i}\right)\prod_{i\in \mathcal{I}_2}\left(\frac{1}{|\tilde B|}\int_{\tilde B} v_i^{-p_i'}\right)^{1/p_i'}\prod_{i\in\mathcal{I}_1} \left\|v_i^{-1}\mathcal{X}_{\tilde B}\right\|_\infty \\
		&\quad \times |\tilde B|^{(\tilde\alpha-\tilde\alpha_0)/n-m_1+1/q'+1-1/(m_2p^*)}\\
		&\lesssim  \|w\mathcal{X}_{\tilde B}\|_\infty^{-1}\, |\tilde B|^{\tilde\delta/n-\tilde\alpha/n+1/p+(\tilde\alpha-\tilde\alpha_0)/n-m_1+1/q'+1-1/(m_2p^*)}\left(\prod_{i=1}^m \|f_iv_i\|_{p_i}\right)\\
		&\lesssim  \|w\mathcal{X}_{B}\|_\infty^{-1}|B|^{1+\tilde\delta/n}\left(\prod_{i=1}^m \|f_iv_i\|_{p_i}\right).
		\end{align*}
		Thus we can achieve the desired estimate provided $m_2\geq 2$. We shall now consider $0\leq m_2<2$. There are only three possible cases:  
		\begin{enumerate}
			\item $m_2=0$. In this case we have $m_2^1=m_2^2=0$ and this implies $\vec{p}=(1,1,\dots,1)$. This situation is not possible, because $p>n/\tilde\alpha$.
			\item $m_2^1=0$ and $m_2^2=1$. In this case $1/p=m-1$. Condition $p>n/\tilde\alpha$ implies $\tilde\alpha>(m-1)n$. Let $i_0$ be the index such that $p_{i_0}=\infty$. By using Fubini's theorem, we can proceed in the following way
			\[\int_B \int_{\tilde B^m}\frac{\prod_{i=1}^m|f_i(y_i)|}{(\sum_{i=1}^m|x-y_i|)^{mn-\tilde\alpha}}\,d\vec{y}\,dx=\int_{\tilde B^m}\prod_{i=1}^m|f_i(y_i)|\left(\int_B \left(\sum_{i=1}^m|x-y_i|\right)^{\tilde\alpha-mn}\,dx\right)\,d\vec{y}.\]
			Since
			\begin{align*}
			\int_B \left(\sum_{i=1}^m|x-y_i|\right)^{\tilde\alpha-mn}\,dx&\lesssim \int_0^{4R}\rho^{\tilde\alpha-mn}\rho^{n-1}\,d\rho\\
			&\lesssim  |B|^{\tilde\alpha/n-m+1},
			\end{align*}
			by  \eqref{eq: condicion local}, we get
			\begin{align*}
			\int_B |I_{\tilde\alpha,m}\vec{g}(x)|\,dx&\lesssim |B|^{\tilde\alpha/n-m+2}\left(\prod_{i=1}^m\|f_iv_i\|_{p_i}\right)\left(\prod_{i\in \mathcal{I}_1}\left\|v_i^{-1}\mathcal{X}_{\tilde B}\right\|_\infty\right) \left(\frac{1}{|\tilde B|}\int_{\tilde B}v_{i_0}^{-1}\right)\\
			&\lesssim \left(\prod_{i=1}^m\|f_iv_i\|_{p_i}\right)\frac{|\tilde B|^{\tilde\alpha/n-m+2+\tilde\delta/n-\tilde\alpha/n+1/p}}{\|w\mathcal{X}_{\tilde B}\|_\infty}\\
			&\lesssim \left(\prod_{i=1}^m\|f_iv_i\|_{p_i}\right)\frac{|B|^{1+\tilde\delta/n}}{\|w\mathcal{X}_{ B}\|_\infty}.
			\end{align*}
			\item $m_2^1=1$ and $m_2^2=0$. If $i_0$ denotes the index for which $1<p_{i_0}<\infty$, the condition $p>n/\tilde\alpha$ implies that
			\[\frac{\tilde\alpha}{n}>\frac{1}{p}=m-1+\frac{1}{p_{i_0}},\]
			and thus $\tilde\alpha>(m-1)n$. We repeat the estimate given in the previous case. This yields
			\begin{align*}
			\int_B |I_1\vec{f}(x)|\,dx&\lesssim |B|^{\tilde\alpha/n-m+1+1/p_{i_0}'}\left(\prod_{i=1}^m\|f_iv_i\|_{p_i}\right)\left(\prod_{i\in \mathcal{I}_1}\left\|v_i^{-1}\mathcal{X}_{\tilde B}\right\|_\infty\right)\left(\frac{1}{|\tilde B|}\int_{\tilde B}v_{i_0}^{-p_{i_0}'}\right)^{1/p_{i_0}'}\\
			&\lesssim \left(\prod_{i=1}^m\|f_iv_i\|_{p_i}\right)\frac{|\tilde B|^{\tilde\alpha/n-m+1+1/p_{i_0}'+\tilde\delta/n-\tilde\alpha/n+1/p}}{\|w\mathcal{X}_{\tilde B}\|_\infty}\\
			&\lesssim \left(\prod_{i=1}^m\|f_iv_i\|_{p_i}\right)\frac{|B|^{1+\tilde\delta/n}}{\|w\mathcal{X}_{ B}\|_\infty}.
			\end{align*}
		\end{enumerate}
		We covered all the possible cases for $m_2$ and the proof is complete.\qedhere
\end{proof}

\medskip

\begin{proof}[Proof of Theorem~\ref{teo: acotacion Lp Lipschitz para T_alpha,b (suma)} ]
It will be enough to prove that
\begin{equation}\label{eq: teo: acotacion Lp Lipschitz para T_alpha,b (suma) - eq1}
\frac{\|w\mathcal{X}_B\|_\infty}{|B|^{1+\tilde\delta/n}}\int_B |T_{\alpha,b_j}^{m}\vec{f}(x)-c_j|\,dx\leq C\prod_{i=1}^m\|f_iv_i\|_{p_i},
\end{equation}
for some positive constant $c_j$ and every ball $B$, for each $j$ and with $C$ independent of $B$ and $j$. Indeed, if \eqref{eq: teo: acotacion Lp Lipschitz para T_alpha,b (suma) - eq1} holds we take $c=\sum_{j=1}^m c_j$ and therefore
\begin{align*}
\frac{\|w\mathcal{X}_B\|_\infty}{|B|^{1+\tilde\delta/n}}\int_B |T_{\alpha,\mathbf{b}}^{m}\vec{f}(x)-c|\,dx&\leq \sum_{j=1}^m \frac{\|w\mathcal{X}_B\|_\infty}{|B|^{1+\tilde\delta/n}}\int_B |T_{\alpha,b_j}^{m}\vec{f}(x)-c_j|\,dx\\
&\leq Cm\prod_{i=1}^m\|f_iv_i\|_{p_i}
\end{align*}
and the proof would be complete. Then we shall proceed to prove \eqref{eq: teo: acotacion Lp Lipschitz para T_alpha,b (suma) - eq1}.

Fix $1\leq j\leq m$ and a ball $B=B(x_B, R)$. We decompose $\vec{f}=(f_1,f_2,\dots,f_m)$ as $\vec{f}=\vec{f}_1+\vec{f}_2$, where $\vec{f}_1=(f_1\mathcal{X}_{2B},f_2\mathcal{X}_{2B},\dots,f_m\mathcal{X}_{2B})$. We take 
\[c_j=\left(T_{\alpha,b_j}^{m}\vec{f}_2\right)_B=\frac{1}{|B|}\int_B T_{\alpha,b_j}^{m}\vec{f}_2(z)\,dz.\]
We also notice that
\begin{equation}\label{eq: teo: acotacion Lp Lipschitz para T_alpha,b (suma) - eq2}
\frac{1}{|B|}\int_B T_{\alpha,b_j}^{m}\vec{f}_2(x)\,dx=\sum_{\sigma\in S_m,\sigma\neq \bf{1}}\frac{1}{|B|}\int_B\int_{(\mathbf{2B})^\sigma} (b_j(x)-b_j(y_j))K_\alpha(x,\vec{y})\prod_{i=1}^mf_i(y_i)\,d\vec{y}\,dx.
\end{equation}

In order to prove \eqref{eq: teo: acotacion Lp Lipschitz para T_alpha,b (suma) - eq1} we write
\begin{align*}
    \frac{\|w\mathcal{X}_B\|_\infty}{|B|^{1+\tilde\delta/n}}\int_B |T_{\alpha,b_j}^{m}\vec{f}(x)-c_j|\,dx&\leq\frac{\|w\mathcal{X}_B\|_\infty}{|B|^{1+\tilde\delta/n}}\left(\int_B |T_{\alpha,b_j}^{m}\vec{f}_1(x)|\,dx+\frac{1}{|B|}\int_B|T_{\alpha,b_j}^{m}\vec{f}_2(x)-c_j|\,dx\right)\\
    &=\frac{\|w\mathcal{X}_B\|_\infty}{|B|^{1+\tilde\delta/n}}\left(I+\frac{1}{|B|}II\right).
\end{align*}
Let us first estimate $I$. By applying Lemma~\ref{lema: acotacion local de Ialpha,m} we get 
\begin{align*}
I=\int_B |T_{\alpha,b_j}^{m}\vec{f}_1(x)\,dx|&\leq\int_B\int_{(2B)^m}|b_j(x)-b_j(y_j)|\,|K_\alpha(x,\vec{y})|\prod_{i=1}^m|f_i(y_i)|\,d\vec{y}\,dx\\
&\lesssim\|b_j\|_{\Lambda(\delta)}\int_B\int_{(2B)^m}|K_{\tilde\alpha}(x,\vec{y})|\prod_{i=1}^m|f_i(y_i)|\,d\vec{y}\,dx\\
&\lesssim\|\mathbf{b}\|_{(\Lambda(\delta))^m}\int_B |I_{\tilde\alpha,m}\vec{f}_1(x)|\,dx\\
&=\|\mathbf{b}\|_{(\Lambda(\delta))^m}\frac{|B|^{1+\tilde\delta/n}}{\|w\mathcal{X}_B\|_\infty}\prod_{i=1}^m\|f_iv_i\|_{p_i}.
\end{align*}
Consequently,
\[\frac{\|w\mathcal{X}_B\|_\infty}{|B|^{1+\tilde\delta/n}}\,\,I\lesssim \|\mathbf{b}\|_{(\Lambda(\delta))^m}\prod_{i=1}^m\|f_iv_i\|_{p_i}.\]

We now turn our attention to $II$. By \eqref{eq: teo: acotacion Lp Lipschitz para T_alpha,b (suma) - eq2} we can write
\begin{align*}
II&\leq\sum_{\sigma\in S_m,\sigma\neq \bf{1}}\int_B\int_B\int_{(\mathbf{2B})^\sigma}\left|(b_j(x)-b_j(y_j))K_\alpha(x,\vec{y})-(b_j(z)-b_j(y_j))K_\alpha(z,\vec{y})\right|\\
&\qquad \times\prod_{i=1}^m|f_i(y_i)|\,d\vec{y}\,dx\,dz\\
&\leq \sum_{\sigma\in S_m,\sigma\neq \bf{1}}\int_B\int_B\int_{(\mathbf{2B})^\sigma}\left|(b_j(x)-b_j(y_j))(K_\alpha(x,\vec{y})-K_\alpha(z,\vec{y}))\right|\prod_{i=1}^m|f_i(y_i)|\,d\vec{y}\,dx\,dz\,\,\\
&\quad + \sum_{\sigma\in S_m,\sigma\neq \bf{1}}\int_B\int_B\int_{(\mathbf{2B})^\sigma}\left|(b_j(x)-b_j(z))K_\alpha(z,\vec{y})\right|\prod_{i=1}^m|f_i(y_i)|\,d\vec{y}\,dx\,dz\\
&= \sum_{\sigma\in S_m,\sigma\neq \bf{1}} (I_1^\sigma+I_2^\sigma).
\end{align*} 
We shall estimate each sum separately. Fix $\sigma\in S_m,\sigma\neq \bf{1}$. We start with $I_1^\sigma$. Since we are assuming $\sigma\neq \bf{1}$, condition \eqref{eq: condicion de suavidad} implies that
\begin{align*}
  |K_\alpha(x,\vec{y})-K_\alpha(z,\vec{y})|&\lesssim \frac{|x-z|^\gamma}{(\sum_{i=1}^m |x-y_i|)^{mn-\alpha+\gamma}}\\
  &\lesssim \frac{|B|^{\gamma/n}}{(\sum_{i=1}^m |x-y_i|)^{mn-\alpha+\gamma}}.
\end{align*}
Therefore we have that
\begin{align*}
I_1^\sigma&\lesssim\|b_j\|_{\Lambda(\delta)}|B|^{\gamma/n}\int_B\int_B \int_{(\mathbf{2B})^\sigma}\frac{|x-y_j|^\delta\prod_{i=1}^m|f_i(y_i)|}{(\sum_{i=1}^m|x-y_i|)^{mn-\alpha+\gamma}}\,d\vec{y}\,dx\,dz\\
&\lesssim\|b_j\|_{\Lambda(\delta)} |B|^{1+\gamma/n}\int_B\int_{(\mathbf{2B})^\sigma}\frac{\prod_{i=1}^m|f_i(y_i)|}{(\sum_{i=1}^m|x-y_i|)^{mn-\tilde\alpha+\delta+\gamma-\delta}}\,d\vec{y}\,dx\\
&\lesssim \|\mathbf{b}\|_{(\Lambda(\delta))^m}|B|^{1+\delta/n}\int_B\int_{(\mathbf{2B})^\sigma}\frac{\prod_{i=1}^m|f_i(y_i)|}{(\sum_{i=1}^m|x-y_i|)^{mn-\tilde\alpha+\delta}}\,d\vec{y}\,dx\\
&=\|\mathbf{b}\|_{(\Lambda(\delta))^m}|B|^{1+\delta/n}\int_B J_1(x,\sigma)\,dx.
\end{align*} 
\color{black}
By\refstepcounter{BPR}\label{pag: estimacion de J_1(x,sigma)} separating the factors in $J_1$ and applying Hölder inequality we arrive to
\begin{align*}
J_1(x,\sigma)&\lesssim \left(\prod_{i:\sigma_i=1} \int_{2B}\frac{|f_i(y_i)|}{|2B|^{1-\tilde \alpha_i/n+\delta/(mn)}}\,dy_i\right)\left(\prod_{i: \sigma_i=0} \int_{\mathbb{R}^n\backslash 2B}\frac{|f_i(y_i)|}{|x-y_i|^{n-\tilde\alpha_i+\delta/m}}\,dy_i\right)\\
&\lesssim \prod_{i=1}^m\|f_iv_i\|_{p_i}\left(\prod_{i:\sigma_i=1} \left\|\frac{v_i^{-1}\mathcal{X}_{ 2B}}{|2B|^{1-\tilde\alpha_i/n+\delta/(mn)}}\right\|_{p_i'}\right)\left(\prod_{i:\sigma_i=0} \left\|\frac{v_i^{-1}\mathcal{X}_{\mathbb{R}^n\backslash 2B}}{|x-\cdot|^{n-\tilde\alpha_i+\delta/m}}\right\|_{p_i'}\right)\\
&=\left(\prod_{i=1}^m\|f_iv_i\|_{p_i}\right)|2B|^{-\sum_{i:\sigma_i=1}(1-\tilde\alpha_i/n+\delta/(mn))}\prod_{i: \sigma_i=1} \|v_i^{-1}\mathcal{X}_{2B}\|_{p_i'}\\
&\qquad \times \prod_{i:\sigma_i=0}\left\|\frac{v_i^{-1}\mathcal{X}_{\mathbb{R}^n\backslash 2B}}{|x-\cdot|^{n-\tilde\alpha_i+\delta/m}}\right\|_{p_i'}\\
&\lesssim \left(\prod_{i=1}^m\|f_iv_i\|_{p_i}\right) \frac{|2B|^{(\tilde\delta-\delta)/n}}{\|w\mathcal{X}_{2B}\|_\infty},
\end{align*}
by virtue of condition \eqref{eq: condicion mezclada para sigma}. Thus
\begin{equation}\label{eq: teo: acotacion Lp Lipschitz para T_alpha,b (suma) - eq3}
I_1^\sigma\lesssim\|\mathbf{b}\|_{(\Lambda(\delta))^m}\left(\prod_{i=1}^m\|f_iv_i\|_{p_i}\right)\frac{|B|^{2+\tilde\delta/n}}{\|w\mathcal{X}_B\|_\infty}.
\end{equation}
We now proceed to estimate $I_2^\sigma$. We have that 
\begin{align*}
I_2^\sigma&\lesssim \|b_j\|_{\Lambda(\delta)}|B|^{\delta/n}\int_B\int_B\int_{(\mathbf{2B})^\sigma}\frac{\prod_{i=1}^m|f_i(y_i)|}{(\sum_{i=1}^m |z-y_i|)^{mn-\alpha}}\,d\vec{y}\,dx\,dz\\
&\lesssim \|\mathbf{b}\|_{(\Lambda(\delta))^m}|B|^{2+\delta/n}\left(\prod_{i:\sigma_i=1} \int_{2B} \frac{|f_i(y_i)|}{|2B|^{1-\alpha_i/n}}\,dy_i\right)\left(\prod_{i:\sigma_i=0} \int_{\mathbb{R}^n\backslash 2B}\frac{|f_i(y_i)|}{|x_B-y_i|^{n-\alpha_i}}\,dy_i\right).
\end{align*}
Since $\tilde\alpha_i=\alpha_i+\delta_i/m$ for each $i$, by applying Hölder inequality we can write
\begin{align*}
\prod_{i:\sigma_i=1} \int_{2B} \frac{|f_i(y_i)|}{|2B|^{1-\alpha_i/n}}\,dy_i&\lesssim \prod_{i:\sigma_i=1}\frac{\|f_iv_i\|_{p_i}}{|2B|^{\sum_{i:\sigma_i=1}(1/p_i-\alpha_i/n)}}\left\|v_i^{-1}\mathcal{X}_{2B}\right\|_{p_i'}\\
&= \prod_{i:\sigma_i=1}\frac{\|f_iv_i\|_{p_i}}{|2B|^{\sum_{i:\sigma_i=1}(1/p_i-\tilde\alpha_i/n+\delta/(mn))}}\left\|v_i^{-1}\mathcal{X}_{2B}\right\|_{p_i'}
\end{align*}
and
\[\prod_{i:\sigma_i=0} \int_{\mathbb{R}^n\backslash 2B}\frac{|f_i(y_i)|}{|x_B-y_i|^{n-\alpha_i}}\,dy_i\lesssim \|f_iv_i\|_{p_i}\left\|\frac{v_i^{-1}\mathcal{X}_{\mathbb{R}^n\backslash 2B}}{(|B|^{1/n}+|x_B-\cdot|)^{n-\tilde\alpha_i+\delta/m}}\right\|_{p_i'}.\]
By combining these estimates and using condition \eqref{eq: condicion mezclada para sigma} we arrive to
\begin{equation}\label{eq: teo: acotacion Lp Lipschitz para T_alpha,b (suma) - eq4}
I_2^\sigma\lesssim \|\mathbf{b}\|_{(\Lambda(\delta))^m}|B|^{2+\delta/n}\prod_{i=1}^m \|f_iv_i\|_{p_i}\frac{|2B|^{(\tilde\delta-\delta)/n}}{\|w\mathcal{X}_B\|_\infty}=\|\mathbf{b}\|_{(\Lambda(\delta))^m}\frac{|B|^{2+\tilde\delta/n}}{\|w\mathcal{X}_B\|_\infty}\prod_{i=1}^m \|f_iv_i\|_{p_i}.
\end{equation}
Therefore, by applying the estimates obtained in \eqref{eq: teo: acotacion Lp Lipschitz para T_alpha,b (suma) - eq3} and \eqref{eq: teo: acotacion Lp Lipschitz para T_alpha,b (suma) - eq4} we conclude that
\[\frac{1}{|B|}II\lesssim \|\mathbf{b}\|_{(\Lambda(\delta))^m}\frac{|B|^{1+\tilde\delta/n}}{\|w\mathcal{X}_B\|_\infty}\prod_{i=1}^m \|f_iv_i\|_{p_i}.\]
This completes the proof of \eqref{eq: teo: acotacion Lp Lipschitz para T_alpha,b (suma) - eq1} and we are done.
\end{proof}

\section{Proof of Theorem~\ref{teo: acotacion Lp Lipschitz para T_alpha,b (producto)}}\label{seccion: conmutador producto}

We devote this section to prove Theorem~\ref{teo: acotacion Lp Lipschitz para T_alpha,b (producto)}. We shall first establish an auxiliary lemma, which is essentially the boundedness given in Lemma~\ref{lema: acotacion local de Ialpha,m} with different parameters. The proof can be achieved by following the same steps and we shall omit it.

\begin{lema}\label{lema: acotacion local de Ialpha,m (producto)}
Let $0<\alpha<mn$, $0<\delta<(n-\alpha)/m$, $\tilde\alpha=\alpha+m\delta$ and $\vec{p}$ a vector of exponents that satisfies $p>n/\tilde\alpha$. Let $\tilde\delta\leq \delta$ and  $(w,\vec{v})$ be a pair of weights belonging  to the class $\mathbb{H}_m(\vec{p},\tilde\alpha,\tilde\delta)$ such that $v_i^{-p_i'}\in\mathrm{RH}_m$ for every $i\in\mathcal{I}_2$. Then there exists a positive constant $C$ such that for every ball $B$ and every $\vec{f}$ such that $f_iv_i\in L^{p_i}$, $1\leq i\leq m$, we have that
\[\int_B |I_{\tilde\alpha,m}\vec{g}(x)|\,dx\leq C\frac{|B|^{1+\tilde\delta/n}}{\|w\mathcal{X}_B\|_\infty}\prod_{i=1}^m\|f_iv_i\|_{p_i},\]
where $\vec{g}=(f_1\mathcal{X}_{2B},f_2\mathcal{X}_{2B},\dots,f_m\mathcal{X}_{2B})$. 
\end{lema}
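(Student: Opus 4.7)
The plan is to run the proof of Lemma~\ref{lema: acotacion local de Ialpha,m} essentially verbatim: the argument there never uses the specific relation $\tilde\alpha=\alpha+\delta$, only that (i) $0<\tilde\alpha<mn$, (ii) $p>n/\tilde\alpha$, (iii) $v_i^{-p_i'}\in\mathrm{RH}_m$ for $i\in\mathcal{I}_2$, and (iv) $(w,\vec v)\in\mathbb{H}_m(\vec p,\tilde\alpha,\tilde\delta)$ so that the local condition \eqref{eq: condicion local} is available with the exponent $\tilde\alpha$. All four items hold here with $\tilde\alpha=\alpha+m\delta$: item (i) follows from $\delta<(n-\alpha)/m$, which gives $\tilde\alpha<n<mn$, while (ii)--(iv) are direct hypotheses. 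Hence the entire chain of estimates transfers unchanged, with every symbol $\tilde\alpha$ now standing for $\alpha+m\delta$ instead of $\alpha+\delta$.

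More concretely, I would split $\mathcal{I}_2=\mathcal{I}_2^1\cup\mathcal{I}_2^2$ according to whether $p_i$ is finite, pull the $L^\infty$ norms $\|f_iv_i\|_\infty$ for $i\in\mathcal{I}_2^2$ and the factors $\|f_i\mathcal{X}_{2B}\|_1$ for $i\in\mathcal{I}_1$ outside the integral, and reduce the estimate to controlling
\[I(x,B)=\int_{(2B)^{m_2}}\frac{\prod_{i\in\mathcal{I}_2^1}|f_i(y_i)|\prod_{i\in\mathcal{I}_2^2}v_i^{-1}(y_i)}{\bigl(\sum_{i\in\mathcal{I}_2}|x-y_i|\bigr)^{mn-\tilde\alpha}}\,d\vec y.\]
Using $p>n/\tilde\alpha$, write $\tilde\alpha=\tilde\alpha^1+\tilde\alpha^2$ with $\tilde\alpha^1>m_1n$ and $\tilde\alpha^2>n/p^*$, enlarge the denominator (since $x,y_i\in 2B$) to produce the factor $|2B|^{\tilde\alpha^1/n-m_1}$, and arrive at
\[I(x,B)\lesssim |2B|^{\tilde\alpha^1/n-m_1}\,I_{\tilde\alpha^2,m_2}\!\bigl(\vec g\,\mathcal{X}_{(2B)^{m_2}}\bigr)(x),\]
where $\vec g$ is the $m_2$-tuple defined as in the proof of Lemma~\ref{lema: acotacion local de Ialpha,m}.

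Choosing the auxiliary exponents $r_j$ and the parameter $\tilde\alpha_0\in(n/p^*,n/r)$ exactly as before, the classical off-diagonal continuity $I_{\tilde\alpha_0,m_2}\colon\prod_j L^{r_j}\to L^q$ (see \cite{Moen09}) followed by Hölder's inequality produces a product of $L^{m_2 p_i'}$ averages of $v_i^{-1}$; these are reduced to $L^{p_i'}$ averages via $v_i^{-p_i'}\in\mathrm{RH}_m\subseteq\mathrm{RH}_{m_2}$. Collecting all exponents and applying~\eqref{eq: condicion local} (with the present $\tilde\alpha=\alpha+m\delta$ and $\tilde\delta$) yields the claimed bound. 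The degenerate cases $m_2=0$ and $m_2=1$ (the latter subdivided into $m_2^1=1,m_2^2=0$ and $m_2^1=0,m_2^2=1$) are handled exactly as in the proof of Lemma~\ref{lema: acotacion local de Ialpha,m}, using Fubini together with the elementary bound $\int_B(\sum_i|x-y_i|)^{\tilde\alpha-mn}\,dx\lesssim|B|^{\tilde\alpha/n-m+1}$ and then invoking~\eqref{eq: condicion local}.

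There is no substantive new obstacle; the only thing to verify is that the algebraic combination of the exponents collapses to $|B|^{1+\tilde\delta/n}/\|w\mathcal{X}_B\|_\infty$, which it does precisely because~\eqref{eq: condicion local} is phrased in terms of the generic parameters $\tilde\alpha$ and $\tilde\delta$ of the class $\mathbb{H}_m(\vec p,\tilde\alpha,\tilde\delta)$. Replacing $\alpha+\delta$ by $\alpha+m\delta$ throughout therefore requires no modification of the arithmetic, which is exactly why the authors omit the proof.
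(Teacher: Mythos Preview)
Your proposal is correct and matches the paper's approach exactly: the authors themselves omit the proof, stating that it ``can be achieved by following the same steps'' as Lemma~\ref{lema: acotacion local de Ialpha,m}. Your observation that the argument there depends only on $0<\tilde\alpha<mn$, $p>n/\tilde\alpha$, the $\mathrm{RH}_m$ hypothesis, and the local condition~\eqref{eq: condicion local} from $\mathbb{H}_m(\vec p,\tilde\alpha,\tilde\delta)$---and not on the specific decomposition $\tilde\alpha=\alpha+\delta$---is precisely the point.
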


\begin{proof}[Proof of Theorem~\ref{teo: acotacion Lp Lipschitz para T_alpha,b (producto)}]
It will be enough to prove that
\begin{equation}\label{eq: teo: acotacion Lp Lipschitz para T_alpha,b (producto) - eq1}
\frac{\|w\mathcal{X}_B\|_\infty}{|B|^{1+\tilde\delta/n}}\int_B |\mathcal{T}_{\alpha,\mathbf{b}}^{m}\vec{f}(x)-c|\,dx\leq C\prod_{i=1}^m\|f_iv_i\|_{p_i},
\end{equation}
for some constant $c$ and every ball $B$, with $C$ independent of $B$ and $\vec{f}$. 

Fix a ball $B=B(x_B, R)$. By proceeding as in the proof of Theorem~\ref{teo: acotacion Lp Lipschitz para T_alpha,b (suma)}, we split  $\vec{f}=\vec{f}_1+\vec{f}_2$, where $\vec{f}_1=(f_1\mathcal{X}_{2B},f_2\mathcal{X}_{2B},\dots,f_m\mathcal{X}_{2B})$. We take 
\[c=\left(\mathcal{T}_{\alpha,\mathbf{b}}^{m}\vec{f}_2\right)_B=\frac{1}{|B|}\int_B \mathcal{T}_{\alpha,\mathbf{b}}^{m}\vec{f}_2(z)\,dz.\]
By Proposition~\ref{propo: representacion de conmutador producto}, for $z\in B$ we have that
\begin{equation}\label{eq: teo: acotacion Lp Lipschitz para T_alpha,b (producto) - eq2}
 \mathcal{T}_{\alpha,\mathbf{b}}^{m}\vec{f}_2(z)=\sum_{\sigma\in S_m,\sigma\neq \bf{1}}\int_{(\mathbf{2B})^\sigma} K_\alpha(z,\vec{y})\prod_{i=1}^m(b_i(z)-b_i(y_i))f_i(y_i)\,d\vec{y}.
\end{equation}

Thus
\begin{align*}
    \frac{\|w\mathcal{X}_B\|_\infty}{|B|^{1+\tilde\delta/n}}\int_B |\mathcal{T}_{\alpha,\mathbf{b}}^{m}\vec{f}(x)-c|\,dx&\leq\frac{\|w\mathcal{X}_B\|_\infty}{|B|^{1+\tilde\delta/n}}\int_B |\mathcal{T}_{\alpha,\mathbf{b}}^{m}\vec{f}_1(x)|\,dx\\
    &\qquad +\frac{\|w\mathcal{X}_B\|_\infty}{|B|^{2+\tilde\delta/n}}\int_B\int_B|\mathcal{T}_{\alpha,\mathbf{b}}^{m}\vec{f}_2(x)-\mathcal{T}_{\alpha,\mathbf{b}}^{m}\vec{f}_2(z)|\,dz\,dx\\
    &=\frac{\|w\mathcal{X}_B\|_\infty}{|B|^{1+\tilde\delta/n}}\left(I+II\right).
\end{align*}
Let us first estimate $I$. By applying Proposition~\ref{propo: representacion de conmutador producto}, \eqref{eq: condicion de tamaño} and Lemma~\ref{lema: acotacion local de Ialpha,m (producto)} we get 
\begin{align*}
I=\int_B |\mathcal{T}_{\alpha,\mathbf{b}}^{m}\vec{f}_1(x)|\,dx&\leq\int_B\int_{(2B)^m}\,|K_\alpha(x,\vec{y})|\prod_{i=1}^m|b_i(x)-b_i(y_i)|\,|f_i(y_i)|\,d\vec{y}\,dx\\
&\leq C\prod_{i=1}^m\|b_i\|_{\Lambda(\delta)}\int_B\int_{(2B)^m}|K_{\tilde\alpha}(x,\vec{y})|\prod_{i=1}^m|f_i(y_i)|\,d\vec{y}\,dx\\
&\leq C\|\mathbf{b}\|_{(\Lambda(\delta))^m}^m\int_B |I_{\tilde\alpha,m}\vec{f}_1(x)|\,dx\\
&\leq C\|\mathbf{b}\|_{(\Lambda(\delta))^m}^m\frac{|B|^{1+\tilde\delta/n}}{\|w\mathcal{X}_B\|_\infty}\prod_{i=1}^m\|f_iv_i\|_{p_i}.
\end{align*}
Consequently,
\[\frac{\|w\mathcal{X}_B\|_\infty}{|B|^{1+\tilde\delta/n}}\,\,I\leq C \|\mathbf{b}\|_{(\Lambda(\delta))^m}^m\prod_{i=1}^m\|f_iv_i\|_{p_i}.\]

We continue with the estimate of $II$. We shall see that
\begin{equation}\label{eq: teo: acotacion Lp Lipschitz para T_alpha,b (producto) - eq3}
|\mathcal{T}_{\alpha,\mathbf{b}}^{m}\vec{f}_2(x)-\mathcal{T}_{\alpha,\mathbf{b}}^{m}\vec{f}_2(z)|\leq C\|\mathbf{b}\|_{(\Lambda(\delta))^m}^m\frac{|B|^{\tilde\delta/n}}{\|w\mathcal{X}_B\|_\infty}\prod_{i=1}^m\|f_iv_i\|_{p_i},
\end{equation}
for every $x,z\in B$. This would imply that $II\leq C\prod_{i=1}^m\|f_iv_i\|_{p_i}$.

By \eqref{eq: teo: acotacion Lp Lipschitz para T_alpha,b (producto) - eq2}, for $x\in B$ we can write
\begin{align*}
    |\mathcal{T}_{\alpha,\mathbf{b}}^{m}&\vec{f}_2(x)-\mathcal{T}_{\alpha,\mathbf{b}}^{m}\vec{f}_2(z)|\\
    &\leq \sum_{\sigma\in S_m,\sigma\neq\mathbf{1}}\int_{(\mathbf{2B})^\sigma}\left|K_\alpha(x,\vec{y})\prod_{i=1}^m(b_i(x)-b_i(y_i))- K_\alpha(z,\vec{y})\prod_{i=1}^m(b_i(z)-b_i(y_i))\right|\\
    &\qquad \times \prod_{i=1}^m|f_i(y_i)|\,d\vec{y}\\
    &\leq \sum_{\sigma\in S_m,\sigma\neq\mathbf{1}}\int_{(\mathbf{2B})^\sigma} |K_{\alpha}(x,\vec{y})-K_{\alpha}(z,\vec{y})|\prod_{i=1}^m|b_i(x)-b_i(y_i)|\,|f_i(y_i)|\,d\vec{y}\\
    &\qquad + \sum_{\sigma\in S_m,\sigma\neq\mathbf{1}}\int_{(\mathbf{2B})^\sigma} |K_{\alpha}(z,\vec{y})|\,\left|\prod_{i=1}^m(b_i(x)-b_i(y_i))-\prod_{i=1}^m(b_i(z)-b_i(y_i))\right|\\
    &\qquad \quad\times \prod_{i=1}^m|f_i(y_i)|\,d\vec{y}\\
    &=\sum_{\sigma\in S_m,\sigma\neq\mathbf{1}} (I_1^{\sigma}+I_2^{\sigma}).
\end{align*}
Fix $\sigma\in S_m,\sigma\neq \bf{1}$. Let us first estimate $I_1^\sigma$. By applying condition \eqref{eq: condicion de suavidad} we have that

\begin{align*}
  |K_\alpha(x,\vec{y})-K_\alpha(z,\vec{y})|&\lesssim \frac{|x-z|^\gamma}{(\sum_{i=1}^m |x-y_i|)^{mn-\alpha+\gamma}}\\
  &\lesssim\frac{|B|^{\gamma/n}}{(\sum_{i=1}^m |x_B-y_i|)^{mn-\alpha+\gamma}}.
\end{align*}
Therefore we have that
\begin{align*}
I_1^\sigma&\lesssim |B|^{\gamma/n}\prod_{i=1}^m\|b_i\|_{\Lambda(\delta)} \int_{(\mathbf{2B})^\sigma}\frac{(\sum_{i=1}^m|x_B-y_i|)^{m\delta}\prod_{i=1}^m|f_i(y_i)|}{(\sum_{i=1}^m|x_B-y_i|)^{mn-\alpha+\gamma}}\,d\vec{y}\\
&\lesssim\|\mathbf{b}\|_{(\Lambda(\delta))^m}^m|B|^{\gamma/n} \int_{(\mathbf{2B})^\sigma}\frac{\prod_{i=1}^m|f_i(y_i)|}{(\sum_{i=1}^m|x_B-y_i|)^{mn-\tilde\alpha+\delta+\gamma-\delta}}\,d\vec{y}\\
&\lesssim\|\mathbf{b}\|_{(\Lambda(\delta))^m}^m|B|^{\delta/n}\int_{(\mathbf{2B})^\sigma}\frac{\prod_{i=1}^m|f_i(y_i)|}{(\sum_{i=1}^m|x_B-y_i|)^{mn-\tilde\alpha+\delta}}\,d\vec{y},
\end{align*}
since $\gamma>\delta$ and $\vec{y}\in (\mathbf{2B})^{\sigma}$ implies that $|x_B-y_j|\geq C |B|^{1/n}$ for at least one index $1\leq j\leq m$. From this expression we can continue as in the estimate performed in page~\pageref{pag: estimacion de J_1(x,sigma)} in order to obtain
\[I_1^\sigma \lesssim \|\mathbf{b}\|_{(\Lambda(\delta))^m}^m\frac{|B|^{\tilde\delta/n}}{\|w\mathcal{X}_B\|_\infty}\prod_{i=1}^m\|f_iv_i\|_{p_i}.\]
\color{black}
Next we proceed to estimate $I_2^\sigma$. Fix $\sigma\in S_m, \sigma\neq\mathbf{1}$. By applying Lemma~\ref{lema: estimacion diferencia de productos} we have that
\begin{align*}
  \left|\prod_{i=1}^m(b_i(x)-b_i(y_i))-\prod_{i=1}^m(b_i(z)-b_i(y_i))\right|
  &\leq \sum_{j=1}^m |b_j(x)-b_j(z)|\prod_{i>j}|b_i(z)-b_i(y_i)|\prod_{i<j}|b_i(x)-b_i(y_i)|\\
  &\lesssim \left(\prod_{i=1}^m \|b_i\|_{\Lambda(\delta)}\right)|B|^{\delta/n}\sum_{j=1}^m\prod_{i>j}|z-y_i|^{\delta}\prod_{i<j}|x-y_i|^{\delta} 
\end{align*}
Since $x$ and $z$ belong to $B$ and $\vec{y}\in (\mathbf{2B})^\sigma$, we have that
\[|x-y_i|\lesssim \sum_{j=1}^m |x_B-y_j|\]
and also
\[|z-y_i|\lesssim \sum_{j=1}^m |x_B-y_j|,\]
for each $i$, regardless $y_i$ belongs to $2B$ or $\mathbb{R}^n\backslash 2B$.
Therefore we arrive to
\[\left|\prod_{i=1}^m(b_i(x)-b_i(y_i))-\prod_{i=1}^m(b_i(z)-b_i(y_i))\right|\lesssim  \|\mathbf{b}\|_{(\Lambda(\delta))^m}^m |B|^{\delta/n}\left(\sum_{j=1}^m|x_B-y_j|\right)^{(m-1)\delta}.\]
By using this estimate we can proceed with $I_2^\sigma$ as follows
\begin{align*}
I_2^\sigma&\lesssim \|\mathbf{b}\|_{(\Lambda(\delta))^m}^m|B|^{\delta/n}\int_{(\mathbf{2B})^\sigma}\frac{\prod_{i=1}^m |f_i(y_i)|}{\left(\sum_{i=1}^m|x_B-y_i|\right)^{mn-\alpha+(1-m)\delta}}\,d\vec{y}\\
&\lesssim\|\mathbf{b}\|_{(\Lambda(\delta))^m}^m|B|^{\delta/n}\int_{(\mathbf{2B})^\sigma}\frac{\prod_{i=1}^m |f_i(y_i)|}{\left(\sum_{i=1}^m|x_B-y_i|\right)^{mn-\tilde\alpha+\delta}}\,d\vec{y}\\
&\lesssim\|\mathbf{b}\|_{(\Lambda(\delta))^m}^m|B|^{\delta/n}\prod_{i: \sigma_i=1}\int_{2B}\frac{|f_i(y_i)|}{|2B|^{1-\tilde\alpha_i/n+\delta/(mn)}}\,dy_i\prod_{i: \sigma_i=0}\int_{\mathbb{R}^n\backslash 2B}\frac{|f_i(y_i)|}{|x_B-y_i|^{n-\tilde\alpha_i+\delta/m}}\,dy_i.
\end{align*}
By applying Hölder inequality and condition \eqref{eq: condicion mezclada para sigma} we get
\begin{align*}
    I_2^\sigma&\lesssim \|\mathbf{b}\|_{(\Lambda(\delta))^m}^m|B|^{\delta/n-\theta(\sigma)}\prod_{i=1}\|f_iv_i\|_{p_i}\prod_{i: \sigma_i=1}\|v_i^{-1}\mathcal{X}_{2B}\|_{p_i'}\prod_{i: \sigma_i=0}\left\|\frac{v_i^{-1}\mathcal{X}_{\mathbb{R}^n\backslash 2B}}{|x_B-\cdot|^{n-\tilde\alpha_i+\delta/m}}\right\|_{p_i'}\\
    &\lesssim \|\mathbf{b}\|_{(\Lambda(\delta))^m}^m \frac{|B|^{\tilde\delta/n}}{\|w\mathcal{X}_B\|_\infty}\prod_{i=1}^m \|f_iv_i\|_{p_i},
\end{align*}
where $\theta(\sigma)=\sum_{i:\sigma_i=1}\left(1-\tilde\alpha_i/n+\delta/(mn)\right) $. So \eqref{eq: teo: acotacion Lp Lipschitz para T_alpha,b (producto) - eq3} holds and the proof is complete.
\end{proof}

\section{The class \texorpdfstring{$\mathbb{H}_m(\vec{p},\beta,\tilde\delta)$}{$Hm(p,\beta,\delta)$}}\label{seccion: clase de pesos}

In this section we give a complete study of the class $\mathbb{H}_m(\vec{p},\beta,\tilde\delta)$ related with the boundedness properties stated in our main results. Recall that $(w,\vec{v})$ belongs to the class $\mathbb{H}_m(\vec{p},\beta,\tilde\delta)$ if there exists a positive constant $C$ for which the inequality
\begin{equation*}
	    \frac{\|w\mathcal{X}_B\|_\infty}{|B|^{(\tilde \delta-\delta)/n}}\prod_{i=1}^m\left(\int_{\mathbb{R}^n}\frac{v_i^{-p_i'}}{(|B|^{1/n}+|x_B-y|)^{(n-\beta_i+\delta/m)p_i'}}\,dy\right)^{1/p_i'}\leq C
	\end{equation*}
holds for every ball $B=B(x_B, R)$.

We begin with a characterizaction of this class of weights in terms of the global condition \eqref{eq: condicion global}. The proof follows similar lines as Lemma 2.1 in \cite{BPR22} and we shall omit it. We recall the notation $\mathcal{I}_1=\{i: p_i=1\}$ and $\mathcal{I}_2=\{i: 1<p_i\leq \infty\}$.

\begin{lema}\label{lema: equivalencia con local y global}
Let $0<\beta<mn$, $\tilde\delta\in \mathbb{R}$, $\vec{p}$ a vector of exponents and $(w,\vec{v})$ a pair of weights such that $v_i^{-1}\in\mathrm{RH}_\infty$ for $i\in\mathcal{I}_1$ and $v_i^{-p_i'}$ is doubling for $i\in\mathcal{I}_2$. Then, condition $\mathbb{H}_m(\vec{p},\beta,\tilde\delta)$ is equivalent to \eqref{eq: condicion global}.
\end{lema}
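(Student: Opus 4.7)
The implication $\mathbb{H}_m(\vec{p},\beta,\tilde\delta)\Rightarrow\eqref{eq: condicion global}$ is immediate. In each factor of \eqref{eq: condicion H_m(p,alfa,delta)} I restrict the domain of integration (or the essential supremum) from $\mathbb{R}^n$ to $\mathbb{R}^n\setminus B$ and use that $|B|^{1/n}+|x_B-y|\approx|x_B-y|$ on $\mathbb{R}^n\setminus B$.

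For the converse, I split $\int_{\mathbb{R}^n}=\int_B+\int_{\mathbb{R}^n\setminus B}$ in \eqref{eq: condicion H_m(p,alfa,delta)}. Since $|B|^{1/n}+|x_B-y|\approx|x_B-y|$ on $\mathbb{R}^n\setminus B$, the tail piece of each factor is comparable to the corresponding factor in \eqref{eq: condicion global}; on $B$ we instead have $|B|^{1/n}+|x_B-y|\approx|B|^{1/n}$, so, writing $\eta_i=n-\beta_i+\delta/m$, the local contribution to the $i$-th factor reduces to
$$|B|^{-\eta_i p_i'/n}\int_B v_i^{-p_i'}\quad (i\in\mathcal{I}_2),\qquad |B|^{-\eta_i/n}\|v_i^{-1}\mathcal{X}_B\|_\infty\quad (i\in\mathcal{I}_1).$$
On $3B\setminus B$ one also has $|x_B-y|\approx|B|^{1/n}$, which gives
$$\int_{\mathbb{R}^n\setminus B}\frac{v_i^{-p_i'}}{|x_B-y|^{\eta_i p_i'}}\,dy\gtrsim |B|^{-\eta_i p_i'/n}\int_{3B\setminus B} v_i^{-p_i'},$$
and analogously in the $\mathcal{I}_1$ case. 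Hence the whole problem reduces to the annular reverse-doubling estimate
$$\int_B w\lesssim \int_{3B\setminus B}w\qquad (\ast)$$
for $w=v_i^{-p_i'}$ when $i\in\mathcal{I}_2$, together with its analogue for $w=v_i^{-1}$ combined with the reverse-H\"older inequality $\|v_i^{-1}\mathcal{X}_B\|_\infty\lesssim |B|^{-1}\int_B v_i^{-1}$ granted by $\mathrm{RH}_\infty$ when $i\in\mathcal{I}_1$.

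The main obstacle is $(\ast)$, since ordinary doubling $\int_{2B}w\leq C\int_B w$ gives control in the wrong direction, namely $\int_{2B\setminus B}w\leq(C-1)\int_B w$. I would establish $(\ast)$ by a shifted-ball argument. Fixing a unit vector $e$ and letting $R$ denote the radius of $B$, set $B'=B(x_B+2Re,R)$. Then $B\cap B'$ has measure zero, $B'\subset 3B\setminus B$, and $3B\subset 5B'$. Iterating the doubling hypothesis on $B'$ gives $\int_{3B} w\leq \int_{5B'} w\leq C\int_{B'} w$, so
$$\int_B w\leq \int_{3B} w\leq C\int_{B'} w\leq C\int_{3B\setminus B} w,$$
which is $(\ast)$. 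For $i\in\mathcal{I}_1$ the inclusion $\mathrm{RH}_\infty\subset A_\infty$ ensures that $v_i^{-1}$ is doubling, so the shifted-ball argument applies; the $\mathrm{RH}_\infty$ condition is what then allows me to pass between $L^\infty$-norms and averages of $v_i^{-1}$ on both $B$ and $3B\setminus B$, closing the argument. The overall scheme parallels the proof of Lemma~2.1 in \cite{BPR22}.
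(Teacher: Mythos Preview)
Your proposal is correct and follows precisely the strategy the paper defers to, namely Lemma~2.1 in \cite{BPR22}: the forward implication is trivial, and for the converse you split each factor into local and annular pieces and absorb the former into the latter via a reverse-doubling estimate obtained by the shifted-ball trick, using $\mathrm{RH}_\infty\subset A_\infty$ to handle the $i\in\mathcal{I}_1$ case. Since the paper omits the argument entirely and simply points to that reference, your sketch is exactly what a reader reconstructing the proof would write.
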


As an immediate consequence of this lemma we have the following.

\begin{coro}
	Under the hypotheses of Lemma~\ref{lema: equivalencia con local y global} we have that conditions \eqref{eq: condicion global}  implies \eqref{eq: condicion local}. 
\end{coro}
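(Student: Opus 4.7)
The plan is to chain two implications that are already in hand. The target statement asserts that, under the hypotheses of Lemma~\ref{lema: equivalencia con local y global} (namely $v_i^{-1}\in\mathrm{RH}_\infty$ for $i\in\mathcal{I}_1$ and $v_i^{-p_i'}$ doubling for $i\in\mathcal{I}_2$), the global condition \eqref{eq: condicion global} implies the local condition \eqref{eq: condicion local}. Since both conditions are direct consequences of the full class condition \eqref{eq: condicion H_m(p,alfa,delta)}, the natural route is to recover membership in $\mathbb{H}_m(\vec p,\beta,\tilde\delta)$ from \eqref{eq: condicion global} and then extract \eqref{eq: condicion local} from it.

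First, I would invoke Lemma~\ref{lema: equivalencia con local y global} in the nontrivial direction: under the stated reverse Hölder and doubling hypotheses on the $v_i$'s, the global condition \eqref{eq: condicion global} already forces $(w,\vec v)\in\mathbb{H}_m(\vec p,\beta,\tilde\delta)$, i.e., the full inequality controlling the product of integrals against the weight $(|B|^{1/n}+|x_B-y|)^{-(n-\beta_i+\delta/m)p_i'}$ over all of $\mathbb{R}^n$. This is exactly the content of the equivalence established in that lemma, whose proof is referred back to \cite{BPR22}.

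Second, I would appeal to the observation recorded right after the definition of the class $\mathbb{H}_m(\vec p,\beta,\tilde\delta)$: splitting the integral defining \eqref{eq: condicion H_m(p,alfa,delta)} into the contribution over $B$ and the contribution over $\mathbb{R}^n\setminus B$, one sees at once that \eqref{eq: condicion H_m(p,alfa,delta)} implies both \eqref{eq: condicion local} and \eqref{eq: condicion global}. In particular, membership in $\mathbb{H}_m(\vec p,\beta,\tilde\delta)$ yields \eqref{eq: condicion local} for free, by restricting each factor to $B$ and using the lower bound $|B|^{1/n}+|x_B-y|\gtrsim |B|^{1/n}$ on $B$.

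Combining these two steps gives the desired chain \eqref{eq: condicion global} $\Rightarrow$ $\mathbb{H}_m(\vec p,\beta,\tilde\delta)$ $\Rightarrow$ \eqref{eq: condicion local}. There is no real obstacle here since both implications have already been established; the corollary is essentially a formal consequence of Lemma~\ref{lema: equivalencia con local y global} together with the elementary remark preceding it. The only mild point to keep in mind is to verify that the hypotheses of the lemma are precisely what is needed to bridge the global tail to full $\mathbb{H}_m$ control, so that no additional assumptions need to be imposed on $(w,\vec v)$ beyond those in the lemma statement.
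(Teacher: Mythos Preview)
Your proposal is correct and matches the paper's intent: the corollary is stated there as ``an immediate consequence'' of Lemma~\ref{lema: equivalencia con local y global} with no further argument given, and your chain \eqref{eq: condicion global} $\Rightarrow$ $\mathbb{H}_m(\vec p,\beta,\tilde\delta)$ $\Rightarrow$ \eqref{eq: condicion local} is precisely what that phrase unpacks to.
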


The next lemma establishes a useful property in order to give examples of weights in the considered class. We shall assume that $\beta_i=\beta/m$ for every $i$. 

\begin{lema}\label{lema: local implica global}
Let $0<\beta<mn$, $\tilde \delta<\tau=(\beta-mn)(1-1/m)+\delta/m$, $\vec{p}$ a vector of exponents and $(w,\vec{v})$ a pair of weights satisfying condition \eqref{eq: condicion local}. Then $(w,\vec{v})$ satisfies \eqref{eq: condicion global}.
\end{lema}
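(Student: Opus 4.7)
The plan is to perform an annular decomposition of $\mathbb{R}^n\setminus B$ and reduce \eqref{eq: condicion global} to a multi-index series on $\mathbb{N}^m$ whose convergence is governed exactly by the hypothesis $\tilde\delta<\tau$. Set $B_k=2^kB$, $\eta=n-\beta/m+\delta/m$, and, uniformly in $i\in\mathcal{I}_1\cup\mathcal{I}_2$, let
\[u_{i,k}:=\left(\frac{1}{|B_k|}\int_{B_k}v_i^{-p_i'}\right)^{1/p_i'}\ \text{if }i\in\mathcal{I}_2,\qquad u_{i,k}:=\|v_i^{-1}\mathcal{X}_{B_k}\|_\infty\ \text{if }i\in\mathcal{I}_1.\]
For $y\in B_k\setminus B_{k-1}$ one has $|x_B-y|\approx|B_k|^{1/n}$; the $\mathcal{I}_2$-integrals are therefore controlled by $\sum_k|B_k|^{1-\eta p_i'/n}\int_{B_k}v_i^{-p_i'}$ and, taking the $p_i'$-th root and using $(\sum_ka_k)^{1/p_i'}\leq\sum_ka_k^{1/p_i'}$ (valid since $p_i'\geq1$), are bounded by $\sum_k|B_k|^{1/p_i'-\eta/n}u_{i,k}$. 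The $\mathcal{I}_1$ essential suprema are analogously dominated by $\sum_k|B_k|^{-\eta/n}u_{i,k}$. Expanding the resulting product, I would obtain
\[\prod_{i=1}^mE_i\ \lesssim\ \sum_{\mathbf{k}\in\mathbb{N}^m}\prod_{i\in\mathcal{I}_1}|B_{k_i}|^{-\eta/n}u_{i,k_i}\prod_{i\in\mathcal{I}_2}|B_{k_i}|^{1/p_i'-\eta/n}u_{i,k_i},\]
where $E_i$ denotes the $i$-th factor in the product on the left of \eqref{eq: condicion global}.

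For each multi-index $\mathbf{k}$, I would then set $k^*=\max_ik_i$ and fix an index $i^*$ attaining this maximum. The inclusion $B_{k_i}\subseteq B_{k^*}$ yields the trivial monotonicity $u_{i,k_i}\leq u_{i,k^*}$ for $i\in\mathcal{I}_1$ and $u_{i,k_i}\leq(|B_{k^*}|/|B_{k_i}|)^{1/p_i'}u_{i,k^*}$ for $i\in\mathcal{I}_2$. Applying \eqref{eq: condicion local} at $B_{k^*}$ to bound $\prod_iu_{i,k^*}$ and absorbing $\|w\mathcal{X}_B\|_\infty\leq\|w\mathcal{X}_{B_{k^*}}\|_\infty$, the factors $|B_{k_i}|^{1/p_i'}$ cancel; writing $|B_k|=2^{kn}|B|$ and using $\sum_{i\in\mathcal{I}_2}1/p_i'=m-1/p$, the $|B|$-powers collect to $|B|^{(\tilde\delta-\delta)/n}$, leaving
\[\prod_{i=1}^mE_i\ \lesssim\ \frac{|B|^{(\tilde\delta-\delta)/n}}{\|w\mathcal{X}_B\|_\infty}\sum_{\mathbf{k}\in\mathbb{N}^m}2^{\mathrm{Exp}(\mathbf{k})},\]
where an algebraic manipulation gives
\[\mathrm{Exp}(\mathbf{k})=k^*\!\left[\tilde\delta-\frac{\delta}{m}+(m-1)\frac{mn-\beta}{m}\right]-\frac{mn-\beta+\delta}{m}\sum_{i\neq i^*}k_i.\]

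The coefficient of $k^*$ is precisely $\tilde\delta-\tau$, which is strictly negative by the hypothesis $\tilde\delta<\tau$, while the coefficient of each remaining $k_i$ equals $-(mn-\beta+\delta)/m<0$ since $\beta<mn$ and $\delta>0$. Partitioning the sum on $\mathbf{k}$ by the pair $(k^*,i^*)$ controls it by $m$ copies of a product of convergent geometric series and so by a constant depending only on $m,\beta,\delta,\tilde\delta$. Multiplying through by $\|w\mathcal{X}_B\|_\infty/|B|^{(\tilde\delta-\delta)/n}$ then yields \eqref{eq: condicion global}.

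The main obstacle is the bookkeeping producing the explicit form of $\mathrm{Exp}(\mathbf{k})$ and, in particular, the identification of its leading coefficient with $\tilde\delta-\tau$; this is exactly the step where the critical value of $\tau$ emerges. What makes the implication local $\Rightarrow$ global work here without any doubling or reverse Hölder assumption on the weights $v_i^{-p_i'}$ is that the monotonicity above uses only the inclusion of balls $B_{k_i}\subseteq B_{k^*}$, in contrast to Lemma~\ref{lema: equivalencia con local y global}.
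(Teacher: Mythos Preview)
Your proposal is correct and follows essentially the same route as the paper's proof: annular decomposition of $\mathbb{R}^n\setminus B$ for each factor, expansion of the product into a sum over $\mathbb{N}^m$, covering $\mathbb{N}^m$ by the sets where a given coordinate is maximal, replacing each $u_{i,k_i}$ by its value on the largest ball via monotonicity of the integrals, and then applying \eqref{eq: condicion local} at that largest ball to reduce everything to a geometric series whose exponent is exactly $\tilde\delta-\tau$. The paper sums the inner indices first (obtaining $\sum_{k_i\le k_1}|B_{k_i}|^{-\theta/n}\lesssim|B_{k_1}|^{-\theta/n}2^{k_1\theta}$) and only then sums over $k_1$, whereas you keep the full exponent $\mathrm{Exp}(\mathbf{k})$ and analyze both coefficients; these are just two bookkeepings of the same computation, and your identification of the leading coefficient with $\tilde\delta-\tau$ is correct. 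One small remark: the negativity of $-\eta=-(mn-\beta+\delta)/m$ only needs $\delta>\beta-mn$ (automatic here since $\beta<mn$ forces this whenever $\delta\ge0$); the paper's proof uses the same positivity of $\theta=\eta$ implicitly in the step $\sum_{k_i\le k_1}2^{(k_1-k_i)\theta}\lesssim 2^{k_1\theta}$, so this is not a discrepancy.
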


\begin{proof}
    The proof follows similar lines as in Lemma 2.3 in \cite{BPR22}. We shall give a scheme 
	for the sake of completeness. Let $\theta=n-\beta/m+\delta/m$. Let $B$ be a ball and $B_k=2^k B$, for $k\in \mathbb{N}$. If $i\in\mathcal{I}_1$ we have that
	\begin{align*}
	\left\|\frac{v_i^{-1}\mathcal{X}_{\mathbb{R}^n\backslash B}}{|x_B-\cdot|^{\theta}}\right\|_\infty &\leq \sum_{k=1}^\infty \left\|\frac{v_i^{-1}\mathcal{X}_{B_{k+1}\backslash B_k}}{|x_B-\cdot|^{\theta}}\right\|_\infty\\
	&\lesssim \sum_{k=1}^\infty |B_k|^{-\theta/n}\left\|v_i^{-1}\mathcal{X}_{B_{k+1}}\right\|_\infty.
	\end{align*}
	On the other hand, for $i\in\mathcal{I}_2$
\begin{align*}
\left(\int_{\mathbb{R}^n\backslash B} \frac{v_i^{-p_i'}(y)}{|x_B-y|^{\theta p_i'}}\,dy\right)^{1/p_i'}&\leq \left(\sum_{k=1}^\infty\int_{B_{k+1}\backslash B_k} \frac{v_i^{-p_i'}(y)}{|x_B-y|^{\theta p_i'}}\,dy\right)^{1/p_i'}\\
&\lesssim \sum_{k=1}^\infty |B_k|^{-\theta/n}\left(\int_{B_{k+1}}v_i^{-p_i'}\right)^{1/p_i'}.
\end{align*}
	
	By taking $\vec{k}=(k_1,k_2.\dots,k_m)$, the left-hand side of \eqref{eq: condicion global} can be bounded by a multiple constant of
	\[\sum_{\vec{k}\in\mathbb{N}^m} \prod_{i\in\mathcal{I}_1} |B_{k_i}|^{-\theta/n}\left\|v_i^{-1}\mathcal{X}_{B_{k_i+1}}\right\|_\infty\, \prod_{i\in\mathcal{I}_2} |B_{k_i}|^{-\theta/n}\left(\int_{B_{k_i+1}}v_i^{-p_i'}\right)^{1/p_i'}=\sum_{\vec{k}\in\mathbb{N}^m}I\left(B,\vec{k}\right).\]
	Observe that $\mathbb{N}^m\subset \bigcup_{i=1}^m K_i,$
	where $K_i=\{\vec{k}=(k_1,k_2,\dots,k_m): k_i\geq k_j \textrm{ for every }j\}$. Let us estimate the sum over $K_1$, being similar for the other sets. Therefore 
		\begin{align*}
	\sum_{\vec{k}\in K_1} I\left(B,\vec{k}\right)&\leq  \sum_{k_1=1}^\infty|B_{k_1}|^{-\tfrac{\theta}{n}}\prod_{i\in\mathcal{I}_1}\left\|v_i^{-1}\mathcal{X}_{B_{k_1+1}}\right\|_\infty\,\prod_{i\in\mathcal{I}_2}\left(\int_{B_{k_1+1}}v_i^{-p_i'}\right)^{1/p_i'} \prod_{i\neq 1}\sum_{k_i=1}^{k_1}|B_{k_i}|^{-\tfrac{\theta}{n}}.
	\end{align*}

	Notice that
	\[\sum_{k_i=1}^{k_1}|B_{k_i}|^{-\theta/n}=|B|^{-\theta/n}\sum_{k_i=1}^{k_1} 2^{-k_i\theta}\lesssim |B_{k_1}|^{-\theta/n}\sum_{k_i=1}^{k_1}2^{(k_1-k_i)\theta}\lesssim |B_{k_1}|^{-\theta/n}2^{k_1\theta}.\]
	
	Thus, from the estimation above  and \eqref{eq: condicion local} we obtain that
	\begin{align*}
	\frac{\|w\mathcal{X}_B\|_\infty}{|B|^{(\tilde\delta-\delta)/n}}\sum_{\vec{k}\in K_1} I\left(B,\vec{k}\right)&\lesssim |B|^{(\delta-\tilde\delta)/n}\sum_{k_1=1}^\infty 2^{(m-1)k_1\theta}\,|B_{k_1}|^{-m\theta/n}\,\|w\mathcal{X}_{B_{k_1+1}}\|_\infty\\
	&\qquad\times \prod_{i\in\mathcal{I}_1}\left\|v_i^{-1}\mathcal{X}_{B_{k_1+1}}\right\|_\infty\, \prod_{i\in\mathcal{I}_2}\left(\int_{B_{k_1+1}}v_i^{-p_i'}\right)^{1/p_i'}\\
	&\lesssim \sum_{k_1=1}^\infty 2^{-k_1(\theta-mn-\tilde\delta+\beta)},
	\end{align*}
being the last sum finite since $\tilde\delta<\tau$.
\end{proof}

The following result establishes that when we restrict the pair of weights $(w,v)$ to satisfy the relation $w=\prod_{i=1}^m v_i$, condition $\mathbb{H}_m(\vec{p},\beta,\tilde\delta)$ is equivalent, for a suitable range of the parameter $\tilde\delta$, to $A_{\vec{p},\infty}.$ 

\begin{coro}\label{coro: suficiencia clase Ap,infinito}
Let $\delta\in\mathbb{R}$, $0<\beta<mn$, $\tilde \delta<\tau=(\beta-mn)(1-1/m)+\delta/m$. Then  $\vec{v}\in \mathbb{H}_m(\vec{p},\beta,\tilde\delta)$ if and only if $\vec{v}\in A_{\vec{p},\infty}$.
\end{coro}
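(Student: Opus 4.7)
The plan is to combine Lemma~\ref{lema: consecuencia de pesos iguales} and Lemma~\ref{lema: local implica global} with the observation that, once $w=\prod_{i=1}^{m}v_i$ is fixed, Lemma~\ref{lema: consecuencia de pesos iguales} forces $\tilde\delta=\beta-n/p$, so that the exponent $\tilde\delta/n+1/p-\beta/n$ of $|B|$ appearing in the local condition \eqref{eq: condicion local} vanishes. Consequently \eqref{eq: condicion local} becomes literally the $A_{\vec{p},\infty}$ condition \eqref{eq: clase Ap,infinito}, and the hypothesis $\tilde\delta<\tau$ of the corollary is understood against this forced value.

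For the implication $\mathbb{H}_m\Rightarrow A_{\vec{p},\infty}$, I would restrict the outer integration in \eqref{eq: condicion H_m(p,alfa,delta)} to the ball $B$ itself, where $|B|^{1/n}+|x_B-y|\approx |B|^{1/n}$; this yields \eqref{eq: condicion local}, which by the observation above is $A_{\vec{p},\infty}$. For the reverse implication $A_{\vec{p},\infty}\Rightarrow \mathbb{H}_m$, I would first read $A_{\vec{p},\infty}$ as \eqref{eq: condicion local}, then apply Lemma~\ref{lema: local implica global} (whose hypothesis $\tilde\delta<\tau$ is precisely the one assumed) to obtain the global condition \eqref{eq: condicion global}, and finally reconstruct \eqref{eq: condicion H_m(p,alfa,delta)} by splitting each of the $m$ integrals as $\int_B+\int_{\mathbb{R}^n\setminus B}$, using subadditivity of $t\mapsto t^{1/p_i'}$, expanding the product, and controlling the resulting sum over $\sigma\in S_m$ of mixed expressions \eqref{eq: condicion mezclada para sigma} by an annular decomposition in the ``global'' coordinates $\{i:\sigma_i=0\}$ paired with the ``local'' coordinates on $B$.

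The main obstacle I anticipate is this last reconstruction step. Only the extreme cases $\sigma=\mathbf{1}$ and $\sigma=\mathbf{0}$ correspond directly to \eqref{eq: condicion local} and \eqref{eq: condicion global}, while each intermediate $\sigma$ requires repeating the annular argument of Lemma~\ref{lema: local implica global} on just the global coordinates, with the local coordinates feeding in the $A_{\vec{p},\infty}$ condition on each enlarged ball $B_k$. One must track the exponents of $|B_k|$ across all coordinates and check that the combined geometric series is summable, a convergence controlled precisely by the gap $\tau-\tilde\delta>0$. This is bookkeeping-heavy but otherwise a direct extension of the computation already carried out in the proof of Lemma~\ref{lema: local implica global}.
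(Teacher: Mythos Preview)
Your approach is correct but differs from the paper's in the reverse implication. After reading $A_{\vec{p},\infty}$ as the local condition \eqref{eq: condicion local}, the paper does \emph{not} attempt to reconstruct \eqref{eq: condicion H_m(p,alfa,delta)} directly from local and global pieces. Instead it invokes Lemma~\ref{lema: equivalencia con local y global}, which says that \eqref{eq: condicion global} alone is equivalent to $\mathbb{H}_m(\vec{p},\beta,\tilde\delta)$ provided $v_i^{-1}\in\mathrm{RH}_\infty$ for $i\in\mathcal{I}_1$ and $v_i^{-p_i'}$ is doubling for $i\in\mathcal{I}_2$. The work then shifts to verifying these structural properties of the individual $v_i$ directly from the $A_{\vec{p},\infty}$ condition: for $i\in\mathcal{I}_1$ one shows $v_i^r\in A_1$ (with $r=p/(mp-1)$), hence $v_i^{-1}\in\mathrm{RH}_\infty$; for $i\in\mathcal{I}_2$ one uses the inclusion $A_{\vec{p},\infty}\subseteq A_{\vec{p},p}$ together with Lemma~\ref{lema: relacion entre Ap y Ap,q} and the known factorization of multilinear $A_{\vec{p}}$ classes to get $v_i^{-p_i'}\in A_{mp_i'}\subseteq A_\infty$, hence doubling.

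Your direct reconstruction via mixed $\sigma$-terms does go through: enlarging the local factors from $B$ to $B_{k_{\max}+1}$ and applying \eqref{eq: condicion local} on that ball, the geometric series in $k_{\max}$ has ratio $2^{-(\tau-\tilde\delta)}$ regardless of how many coordinates are global, and the residual powers of $|B|$ cancel exactly. This is more elementary in that it avoids any appeal to $\mathrm{RH}_\infty$ or doubling and does not need Lemma~\ref{lema: relacion entre Ap y Ap,q}. The paper's route, by contrast, extracts genuine one-weight information about each $v_i$ as a byproduct and reuses the already-packaged Lemma~\ref{lema: equivalencia con local y global}, trading your annular bookkeeping for a short structural argument.
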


\begin{proof}
Let $\vec{v}\in \mathbb{H}_m(\vec{p},\beta,\tilde\delta)$. Then condition \eqref{eq: condicion local} holds. On the other hand, by Lemma~\ref{lema: consecuencia de pesos iguales}, we have that $\tilde \delta=\beta-n/p$. This implies that $\vec{v}\in A_{\vec{p},\infty}$. 

Conversely, let $\vec{v}\in  A_{\vec{p},\infty}$. Since $\tilde\delta <\tau$, by Lemma~\ref{lema: local implica global} we have that $\vec{v}$ satisfies \eqref{eq: condicion global}. By Lemma~\ref{lema: equivalencia con local y global} it will be enough to check that $v_i^{-1}\in \mathrm{RH}_\infty$ for $i\in \mathcal{I}_1$ and $v_i^{-p_i'}$ is doubling for $i\in\mathcal{I}_2$. Let us first check that $v_i^{-1}\in \mathrm{RH}_\infty$ for $i\in \mathcal{I}_1$. Let $r=p/(mp-1)$,  fix $i_0\in \mathcal{I}_1$ and observe that
	\begin{align*}
	\frac{1}{|B|}\int_B v_{i_0}^r&=\frac{1}{|B|}\int_B\prod_{i=1}^m v_i^{r}\prod_{i\neq i_0} v_i^{-r}\\
	&\leq \left\|\mathcal{X}_B\prod_{i=1}^m v_i\right\|_\infty^r\prod_{i\in\mathcal{I}_1, i\neq i_0} \|v_i^{-1}\mathcal{X}_B\|_\infty^r\prod_{i\in\mathcal{I}_2}\left(\frac{1}{|B|}\int_B v_i^{-p_i'}\right)^{r/p_i'}\\
	&\leq \frac{[\vec{v}]_{A_{\vec{p},\infty}}^r}{\|v_{i_0}^{-1}\mathcal{X}_B\|_\infty^r} \\
	&=[\vec{v}]_{A_{\vec{p},\infty}}^r\inf_B v_{i_0}^{r}.
	\end{align*}
	Therefore, $v_{i_0}^r$ is an $A_1$ weight. Then we can conclude $v_{i_0}^{-1}$ is an $\mathrm{RH}_\infty$ weight. 
	
	On the other hand, observe that $A_{\vec{p},\infty}\subseteq A_{\vec{p},q}$ for every $q>0$. If we pick $q=p$ we can apply Lemma~\ref{lema: relacion entre Ap y Ap,q} to conclude that $\vec{z}=(z_1,\dots,z_m)$ belongs to $A_{\vec{\ell}}$, where $\ell=p$, $z_i=v_i^{\ell_i}$ and $\ell_i=p_i$ for every $i$ . This implies (see, for example, Theorem 3.6 in \cite{L-O-P-T-T}) that $z_i^{1-\ell_i'}\in A_{m\ell_i'}$, that is, $v_i^{-p_i'}\in A_{mp_i'}\subseteq A_\infty$, so it is a doubling weight for every $i\in \mathcal{I}_2$. This completes the proof.
\end{proof}

The following two theorems allows us to describe the region where we can find nontrivial weights in $\mathbb{H}_m(\vec{p},\beta,\tilde\delta)$ in terms of the parameters $p$, $\beta$ and $\tilde\delta$.

\begin{teo}\label{teo: no ejemplos para la clase H_m(p,beta,delta)}
Let $\delta\in\mathbb{R}$ be fixed. Let $0<\beta<mn$,  $\tilde \delta\in \mathbb{R}$ and $\vec{p}$ be a vector of exponents. The following statements hold:
		\begin{enumerate}[\rm(a)]
		\item\label{item: teo: no ejemplos para la clase H_m(p,beta,delta) - item a} If $\tilde\delta>\delta$ or $\tilde\delta>\beta-n/p$ then condition $\mathbb{H}_m(\vec{p},\beta,\tilde\delta)$ is satisfied if and only if $v_i=\infty$ a.e. for some $1\le i\le m$.
		\item\label{item: teo: no ejemplos para la clase H_m(p,beta,delta) - item b} The same conclusion holds if $\tilde\delta=\beta-n/p=\delta$.
		\item \label{item: teo: no ejemplos para la clase H_m(p,beta,delta) - item c}If $\tilde\delta<\beta-mn$, then condition $\mathbb{H}_m(\vec{p},\beta,\tilde\delta)$ is satisfied if and only if $v_i=\infty$ a.e. for some $1\le i\le m$ or $w=0$ a.e.
		\end{enumerate}	
\end{teo}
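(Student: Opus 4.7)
The overall strategy is to prove the ``if'' direction of each part by observing that when $v_i=\infty$ a.e.\ (or $w=0$ a.e.\ in part~(c)) the left-hand side of the $\mathbb{H}_m(\vec{p},\beta,\tilde\delta)$ inequality reduces to $0\le C$, and to prove the ``only if'' direction by contradiction: assume that every $v_i$ is a genuine weight (so $v_i^{-p_i'}>0$ a.e.) and $w\not\equiv 0$, and exhibit balls along which \eqref{eq: condicion H_m(p,alfa,delta)} fails. To do this, fix once and for all a point $x_0$ which is simultaneously a density point of $\{w>\varepsilon\}$ for some $\varepsilon>0$ and a common Lebesgue point of every $v_i^{-p_i'}$ (replaced by $v_i^{-1}$ when $p_i=1$); such $x_0$ exists because the set of Lebesgue points has full measure and intersects the positive-measure set $\{w>\varepsilon\}$. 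For the balls $B_R=B(x_0,R)$ and $R$ small this yields the uniform lower bounds $\|w\mathcal{X}_{B_R}\|_\infty\ge\varepsilon$, $\|v_i^{-1}\mathcal{X}_{B_R}\|_\infty\ge v_i^{-1}(x_0)/2$ for $i\in\mathcal{I}_1$, and $|B_R|^{-1}\int_{B_R} v_i^{-p_i'}\ge v_i^{-p_i'}(x_0)/2$ for $i\in\mathcal{I}_2$.

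For part~(a) I would let $R\to 0$. In the subcase $\tilde\delta>\delta$ I would lower bound each integral in \eqref{eq: condicion H_m(p,alfa,delta)} by restricting it to the fixed ball $B(x_0,1)$; for $R\le 1$ this gives $\prod_i(\cdot)^{1/p_i'}\gtrsim 1$ uniformly in $R$, while the right-hand side $C|B_R|^{(\tilde\delta-\delta)/n}\to 0$. In the subcase $\tilde\delta>\beta-n/p$ I would instead restrict each integral to $B_R$ itself to recover the local condition \eqref{eq: condicion local}; the lower bounds above make its left-hand side bounded below by a strictly positive constant, while $|B_R|^{\tilde\delta/n+1/p-\beta/n}\to 0$.

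Part~(b) is the principal technical obstacle, since $\tilde\delta=\delta$ makes $|B|^{(\tilde\delta-\delta)/n}=1$ and $\tilde\delta=\beta-n/p$ makes $|B|^{\tilde\delta/n+1/p-\beta/n}=1$, so the previous limiting arguments yield only bounded quantities. However, the two equalities jointly force $\sum_{i=1}^m(\delta/m+n/p_i-\beta_i)=0$, so there exists an index $i_0$ with $(n-\beta_{i_0}+\delta/m)p_{i_0}'\ge n$ (and with $n-\beta_{i_0}+\delta/m>0$ when $p_{i_0}=1$). Using a dyadic decomposition of $B(x_0,1)\setminus B_R$ into annuli around $x_0$ together with the Lebesgue point property of $v_{i_0}^{-p_{i_0}'}$, one checks that
\[\left(\int_{\mathbb{R}^n}\frac{v_{i_0}^{-p_{i_0}'}(y)\,dy}{(R+|x_0-y|)^{(n-\beta_{i_0}+\delta/m)p_{i_0}'}}\right)^{1/p_{i_0}'}\longrightarrow\infty\qquad\text{as }R\to 0,\]
polynomially if the inequality is strict and logarithmically at the borderline $(n-\beta_{i_0}+\delta/m)p_{i_0}'=n$; when $p_{i_0}=1$ the essential supremum factor blows up like a negative power of $R$ analogously. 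The remaining $m-1$ factors and $\|w\mathcal{X}_{B_R}\|_\infty$ stay bounded below by positive constants, so the product in \eqref{eq: condicion H_m(p,alfa,delta)} diverges as $R\to 0$, contradicting the uniform bound $C$. Identifying the critical index $i_0$ from the sum constraint and obtaining the (possibly only logarithmic) blow-up at the borderline is the delicate step.

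For part~(c) I would let $R\to\infty$. Restricting each integral in \eqref{eq: condicion H_m(p,alfa,delta)} to $B=B(x_0,R)$ itself yields
\[\|w\mathcal{X}_B\|_\infty\prod_{i\in\mathcal{I}_1}\|v_i^{-1}\mathcal{X}_B\|_\infty\prod_{i\in\mathcal{I}_2}\left(\int_B v_i^{-p_i'}\right)^{1/p_i'}\le C|B|^{(\tilde\delta+mn-\beta)/n}.\]
The left-hand side is monotone non-decreasing in $B$ and strictly positive once $B$ contains a fixed initial ball on which all factors are non-trivial, while the exponent $\tilde\delta+mn-\beta<0$ forces the right-hand side to vanish as $|B|\to\infty$, producing the desired contradiction.
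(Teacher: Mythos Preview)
Your treatment of parts~(a) and~(c) coincides with the paper's: both arguments pass to the local consequence \eqref{eq: condicion local} of the $\mathbb{H}_m$ condition and send the radius of balls centred at a well-chosen point to $0$ (for (a)) or to $\infty$ (for (c)). The paper phrases (c) by fixing a small ball $B_0$ inside a growing $B$, which is only a cosmetic variant of your monotonicity formulation.

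For part~(b) you take a genuinely different route. The paper applies H\"older's inequality with exponents $\xi p_i'$ (where $\xi=m-1/p=\sum_{i}1/p_i'$) to collapse the product over $i\in\mathcal{I}_2$ into a single integral of $\bigl(\prod_{i\in\mathcal{I}_2} v_i^{-1}\bigr)^{1/\xi}$ against a kernel whose exponent, under the hypothesis $\tilde\delta=\beta-n/p=\delta$, becomes exactly $(mn-\beta+\delta)/\xi=n$; the conclusion is then obtained by invoking an argument from \cite{Pradolini01}. You instead exploit the identity $\sum_{i=1}^m(\delta/m+n/p_i-\beta_i)=0$ to pick a \emph{single} index $i_0$ whose factor alone diverges as $R\to 0$, via a dyadic annular decomposition and the Lebesgue-point property. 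Your argument is more self-contained (no external reference), at the cost of having to handle the logarithmic borderline $(n-\beta_{i_0}+\delta/m)p_{i_0}'=n$ explicitly. One caution: your parenthetical assertion that $n-\beta_{i_0}+\delta/m>0$ whenever the selected $i_0$ lies in $\mathcal{I}_1$ does not follow automatically from the sum constraint; if that exponent vanishes the corresponding factor reduces to $\|v_{i_0}^{-1}\|_\infty$, which need not blow up, and you must pass to another index. This is possible unless every $p_i=1$ and every summand $\delta/m+n-\beta_i$ vanishes, a degenerate configuration that neither your argument nor the paper's H\"older reduction actually resolves.
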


\begin{proof}
	Let $(w,\vec{v}) \in~\mathbb{H}_m(\vec{p},\beta,\tilde\delta)$. We start with the proof of item~\eqref{item: teo: no ejemplos para la clase H_m(p,beta,delta) - item a}.  We shall first assume that 
	$\tilde\delta >\delta$. By picking a ball $B=B(x_B, R)$ such that $x_B$ is a Lebesgue point of $w^{-1}$, from \eqref{eq: condicion H_m(p,alfa,delta)} we obtain 
	\begin{align*}
	\prod_{i\in\mathcal{I}_1}\left\|\frac{v_i^{-1}}{(|B|^{1/n}+|x_B-\cdot|)^{n-\beta_i+\delta/m}}\right\|_\infty\,\prod_{i\in\mathcal{I}_2}\left(\int_{\mathbb{R}^n} \frac{v_i^{-p_i'}}{(|B|^{1/n}+|x_B-\cdot|)^{(n-\beta_i+\delta/m)p_i'}}\right)^{\tfrac{1}{p_i'}}&\lesssim \frac{|B|^{(\tilde\delta-\delta)/n}}{\|w\mathcal{X}_B\|_\infty}\\
	&\lesssim \frac{w^{-1}(B)}{|B|R^{\delta-\tilde\delta}},
	\end{align*}
	for every $R>0$. By letting $R$ approach to zero we can deduce  that there exists $1\leq i \leq m$ such that $v_i = \infty$ almost everywhere.
	
 	Let us now consider the case $\tilde\delta > \beta - n/p$. Pick a ball $B=B(x_B, R)$, being $x_B$ a Lebesgue point of $w^{-1}$ and of every $v_i^{-1}$. Then condition \eqref{eq: condicion local} implies that
 	\[\prod_{i=1}^m \frac{1}{|B|}\int_B v_i^{-1}\leq \prod_{i\in\mathcal{I}_1}\left\|v_i^{-1}\mathcal{X}_B\right\|_\infty\prod_{i\in\mathcal{I}_2}\left( \frac{1}{|B|}\int_B v_i^{-p'_i} \right)^{1/p'_i }\lesssim \frac{|B|^{\frac{\tilde\delta}{n}-\frac{\beta}{n}+\frac{1}{p}  } }{||w\mathcal{X}_B||_{\infty}}
	\lesssim \frac{w^{-1}(B)}{|B|} R^{\tilde\delta - \beta + n/p}\]
	for every $R>0$. If we let again $R$ approach to zero, we obtain
	\begin{equation*}
	\prod_{i=1 }^{m} v_{i}^{-1}(x_B)=0, 
	\end{equation*}	
	and then $\prod_{i=1 }^{m} v_{i}^{-1}$ is zero a.e. This allows us to conclude that the set $\bigcap_{i=1}^m \{v_i^{-1} >0 \}$ has null measure. Since $v_i(y)>0$ for almost every $y$ and every $i$, there exists $j$ such that $v_j = \infty$ a.e.
	
	We now proceed with item~\eqref{item: teo: no ejemplos para la clase H_m(p,beta,delta) - item b}. Suppose $\tilde\delta= \beta - n/p =\delta$. We define
	\begin{equation*}
	\xi = \sum_{i=1 }^{m }\frac{1}{p'_i} = m - \frac{1}{p}.
	\end{equation*}
	By applying Hölder inequality we obtain 
	\begin{equation*}
	\left(\int_{\mathbb{R}^n } \frac{\left(\prod_{i\in\mathcal{I}_2} v_i^{-1 }\right)^{1/\xi}}{(|B|^{1/n} + |x_B -\cdot|)^{\sum_{i\in\mathcal{I}_2}(n-\beta_i +\delta/m)/\xi }} \right)^{\xi} 
	\lesssim \prod_{i\in\mathcal{I}_2} 
	\left(
	\int_{\mathbb{R}^n } \frac{ v_i^{-p'_i }}{(|B|^{1/n} + |x_B -\cdot|)^{(n-\beta_i + \delta/m)p'_i} } \right)^{\tfrac{1}{p_i'}}
	\end{equation*}
	and since $(w,\vec{v})\in \mathbb{H}_{m}(\vec{p}, \beta, \tilde\delta)$ this implies that
	\[\prod_{i\in \mathcal{I}_1}\left\|\frac{v_i^{-1}}{(|B|^{1/n}+|x_B-\cdot|)^{n-\beta_i+\delta/m}}\right\|_\infty\left(\int_{\mathbb{R}^n } \frac{(\prod_{i\in\mathcal{I}_2} v_i^{-1 })^{1/\xi}}{(|B|^{1/n} + |x_B -\cdot|)^{\sum_{i\in\mathcal{I}_2}(n-\beta_i +\delta/m)/\xi }} \right)^{\xi}\lesssim \frac{w^{-1}(B)}{|B|},\]
	and so we can deduce that for every ball $B$
	\[\left(\int_{\mathbb{R}^n } \frac{(\prod_{i=1}^m v_i^{-1 })^{1/\xi}}{(|B|^{1/n} + |x_B -y|)^{(mn-\beta +\delta)/\xi }}\,dy\right) ^{\xi}\lesssim \frac{w^{-1}(B)}{|B|}.\]

	From this inequality we can continue by adapting an argument of \cite{Pradolini01} in order to conclude that there exists $i$ such that $v_i=\infty$ a.e. 

	We conclude with the proof of item~\eqref{item: teo: no ejemplos para la clase H_m(p,beta,delta) - item c}. If we assume that $\tilde\delta<\beta-mn$,  given a ball $B=B(x_B, R)$ and $B_0\subset B$, condition \eqref{eq: condicion local} implies that
	\begin{equation*}
	\|w\mathcal{X}_{B_0}\|_\infty \prod_{i=1}^{m}\|v_i^{-1 }\mathcal{X}_{B_0}\|_{p'_i}\leq \|w\mathcal{X}_{B}\|_\infty \prod_{i=1 }^{m}\|v_i^{-1 }\mathcal{X}_{B}\|_{p'_i}\lesssim R^{\tilde\delta -\beta+mn}.
	\end{equation*}
	The right-hand side of the inequality above tends to zero when $R$ approaches to $\infty$, which  implies that either $\|w\mathcal{X}_{B_0}\|_\infty =0$ or $\|v_i^{-1 }\mathcal{X}_{B_0}\|_{p'_i}=0$, for some  $i$. By the arbitrariness of $B_0$ we obtain either $w=0$ or $v_i =\infty$ for some $i$, respectively.
\end{proof}

\begin{teo}\label{teo: ejemplos para la clase H_m(p,beta,delta)}
	Let $\delta\in\mathbb{R}$ be fixed. Given $0<\beta<mn$, there exist pairs of weights $(w,\vec{v})$ satisfying \eqref{eq: condicion H_m(p,alfa,delta)} for every $\vec{p}$ and $\tilde\delta$ such that $\beta-mn\leq \tilde\delta\leq \min\{\delta,\beta-n/p\}$, excluding the case $\tilde\delta=\delta$ when $\beta-n/p=\delta$.
\end{teo}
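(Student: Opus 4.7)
The plan is to exhibit explicit power-type weights that realize every admissible $\tilde\delta$. First I would invoke Lemma~\ref{lema: equivalencia con local y global} to reduce condition $\mathbb{H}_m(\vec p,\beta,\tilde\delta)$ to the conjunction of the local condition \eqref{eq: condicion local} and the global condition \eqref{eq: condicion global}, provided the chosen $v_i^{-p_i'}$ are doubling (for $i\in\mathcal I_2$) and $v_i^{-1}\in\mathrm{RH}_\infty$ (for $i\in\mathcal I_1$). Then, whenever $\tilde\delta<\tau=(\beta-mn)(1-1/m)+\delta/m$, Lemma~\ref{lema: local implica global} lets me drop the global check and verify only the local one.

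For a prescribed $\tilde\delta$ in the allowed region I would distribute $\beta_i=\beta/m$ for every $i$, set $A=\beta-n/p-\tilde\delta\ge 0$, and take
\[
v_i(x)=(1+|x|)^{a_i},\qquad w\equiv 1,
\]
with $a_i\ge 0$ chosen so that $\sum_{i=1}^m a_i=A$ (for instance $a_i$ proportional to $1/p_i'$, which gives the integrability needed at infinity when $p$ is large and $\tilde\delta$ is close to its upper bound). The identity $A=\beta-n/p-\tilde\delta$ is exactly the exponent balance that appears when one computes \eqref{eq: condicion local}: splitting into balls with $|x_B|\le R$ (where the averages of $v_i^{-p_i'}$ are comparable to constants) and balls with $|x_B|>R$ (where the averages behave like $(1+|x_B|)^{-a_ip_i'}$), the local condition reduces to the uniform bound $(R/(1+|x_B|))^A\lesssim 1$, which holds trivially.

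For $\tilde\delta\in[\tau,\min\{\delta,\beta-n/p\}]$ (excluding the corner $\tilde\delta=\delta=\beta-n/p$, where the exclusion is forced by Theorem~\ref{teo: no ejemplos para la clase H_m(p,beta,delta)}\eqref{item: teo: no ejemplos para la clase H_m(p,beta,delta) - item b}) the global condition must be checked directly. I would estimate the integrals in \eqref{eq: condicion global} by decomposing $\mathbb{R}^n\setminus B$ into the dyadic annuli $B_{k+1}\setminus B_k$ used in the proof of Lemma~\ref{lema: local implica global}; on each annulus $v_i^{-p_i'}$ is essentially constant and the kernel $(|B|^{1/n}+|x_B-\cdot|)^{-(n-\beta_i+\delta/m)p_i'}$ decays geometrically, giving a convergent geometric sum whose total is again controlled by $R^{\tilde\delta-\delta}$. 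The doubling of $(1+|x|)^{-a_ip_i'}$ and the $\mathrm{RH}_\infty$ property of $(1+|x|)^{-a_i}$ are immediate, so Lemma~\ref{lema: equivalencia con local y global} applies and condition $\mathbb{H}_m(\vec p,\beta,\tilde\delta)$ is verified.

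The main obstacle I anticipate is ensuring the integrability conditions $(n-\beta_i+\delta/m+a_i)p_i'>n$ at infinity and $a_ip_i'<n$ near the origin simultaneously for every $i$. A symmetric choice $a_i=A/m$ is not always feasible when the $p_i$ are unbalanced; this is resolved by allowing the $\beta_i$ and $a_i$ to depend on $i$ (with $\sum\beta_i=\beta$, $0<\beta_i<n$, and $\sum a_i=A$) and tuning them against the $p_i'$, which gives enough flexibility to cover the whole strip $\beta-mn\le\tilde\delta\le\min\{\delta,\beta-n/p\}$ save for the single corner already excluded by Theorem~\ref{teo: no ejemplos para la clase H_m(p,beta,delta)}.
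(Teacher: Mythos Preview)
Your reduction via Lemmas~\ref{lema: equivalencia con local y global} and~\ref{lema: local implica global} and the use of power-type weights is the right overall idea, and for $\tilde\delta<\tau$ the choice $w\equiv1$, $v_i=(1+|x|)^{a_i}$ with $\sum a_i=A$ does verify the local condition essentially as you describe.

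The genuine gap is in the range $\tau\le\tilde\delta\le\min\{\delta,\beta-n/p\}$, where your choice $w\equiv1$ cannot work in general. Take $B=B(0,R)$ with $R$ small. Then for each $i$ with $\theta_i:=n/p_i+(\delta-\beta)/m>0$ (in particular for every $i\in\mathcal I_1$, since then $\theta_i=n+(\delta-\beta)/m>0$ throughout the region $\tilde\delta\ge\beta-mn$) the $i$-th factor in \eqref{eq: condicion global} is bounded below by a constant times $R^{-\theta_i}$, independently of the value of $a_i$: near $|y|\approx R$ one has $(1+|y|)^{-a_i}\approx 1$, so the weight plays no role there. Hence the left side of \eqref{eq: condicion global} is at least of order $R^{\delta-\tilde\delta-\sum_{\theta_i>0}\theta_i}$, and as soon as $\tilde\delta>\tau$ one can produce $\vec p$ (for example $p_1=1$, $p_2=\infty$ when $m=2$) for which this exponent is strictly negative. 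No tuning of the $\beta_i$ or $a_i$ helps, because the divergence comes from the kernel $|x_B-\cdot|^{-(n-\beta_i+\delta/m)}$ near the boundary of a shrinking ball, not from $v_i$. (Your stated concern about ``$a_ip_i'<n$ near the origin'' is a red herring: $(1+|x|)^{-a_ip_i'}$ has no singularity.)

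The paper repairs this by letting $w(x)=|x|^{\rho}$ with $\rho>0$ in cases~\eqref{item: teo: ejemplos para la clase H_m(p,beta,delta) - item c}--\eqref{item: teo: ejemplos para la clase H_m(p,beta,delta) - item e}; then $\|w\mathcal X_B\|_\infty\lesssim R^{\rho}$ for small balls near the origin, and this extra decay is exactly what absorbs the $R^{-\sum_{\theta_i>0}\theta_i}$ blow-up. With $\rho=\tilde\delta-\tau$ and $\xi_i=(\delta-\tau)/m-\theta_i$ the algebra closes. So the missing idea in your proposal is that a nontrivial $w$ is essential once $\tilde\delta\ge\tau$.
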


The next figure depicts the area in which we can find nontrivial pair of weights belonging to $\mathbb{H}_m(\vec{p},\beta,\tilde\delta)$, for a fixed value $\delta$ and depending on $\beta$.

\begin{center}
	\begin{tikzpicture}[scale=0.75]
	\node[above] at (-4,6) {$\beta>\delta$};
	\draw [-stealth, thick] (-6,-5)--(-6,5);
	\draw [-stealth, thick] (-7,0)--(-1,0);
	\draw [thick] (-6.05,3)--(-5.95, 3);
	\node [left] at (-6,5) {$\tilde\delta$};
	\node [left] at (-6,3) {$\delta$};
	\node [below] at (-1,0) {$1/p$};
	\node [below] at (-2,0) {$m$};
	\draw [thick] (-2,0.05)--(-2,-0.05);
	\draw [thick] (-6.05,-4)--(-5.95, -4);
	\node [left] at (-6,-4) {$\beta-mn$};
	\draw [fill=americanrose, fill opacity=0.5] (-6,-4)--(-2,-4)--(-4,3)--(-6,3)--cycle;
	\draw [dashed, thick, color=arsenic] (-6,1)--(-3.4286,1);
	\node [left] at (-6,1) {$\tau$};
	\draw [fill=white] (-4,3) circle (0.08cm);
	\node [right] at (-3.5,2) {$\tilde\delta=\beta-n/p$};
	\node[above] at (3,6) {$\beta=\delta$};
	\draw [-stealth, thick] (1,-5)--(1,5);
	\draw [-stealth, thick] (0,0)--(6,0);
	\draw [thick] (0.95,3)--(1.05, 3);
	\node [left] at (1,5) {$\tilde\delta$};
	\node [left] at (1,3) {$\delta$};
	\node [below] at (6,0) {$1/p$};
	\node [below] at (5,0) {$m$};
	\draw [thick] (5,0.05)--(5,-0.05);
	\draw [thick] (0.95,-4)--(1.05, -4);
	\node [left] at (1,-4) {$\beta-mn$};
	\draw [fill=americanrose, fill opacity=0.5] (1,-4)--(5,-4)--(1,3)--cycle;
	\draw [dashed, thick, color=arsenic] (1,1)--(2.152857,1);
	\node [left] at (1,1) {$\tau$};
	\draw [fill=white] (1,3) circle (0.08cm);
	\node [right] at (2,2) {$\tilde\delta=\beta-n/p$};
	\node[above] at (10,6) {$\beta<\delta$};
	\draw [-stealth, thick] (8,-5)--(8,5);
	\draw [-stealth, thick] (7,0)--(13,0);
	\draw [thick] (7.95,3)--(8.05, 3);
	\node [left] at (8,5) {$\tilde\delta$};
	\node [left] at (8,3) {$\delta$};
	\node [below] at (13,0) {$1/p$};
	\node [below] at (12,0) {$m$};
	\draw [thick] (12,0.05)--(12,-0.05);
	\draw [thick] (7.95,-4)--(8.05, -4);
	\node [left] at (8,-4) {$\beta-mn$};
	\draw [fill=americanrose, fill opacity=0.5] (8,-4)--(12,-4)--(8,2)--cycle;
	\draw [dashed, thick, color=arsenic] (8,1)--(8.6667,1);
	\node [left] at (8,1) {$\tau$};
	\node [right] at (8.5,2) {$\tilde\delta=\beta-n/p$};
	\end{tikzpicture}
\end{center}

Since the classes $\mathbb{H}_m(\vec{p},\beta,\tilde\delta)$ have a similar structure as those defined in \cite{BPR22}, the proof of the theorem above will follow similar lines as that in Theorem 5.1 of \cite{BPR22}, with adequate changes. We include a sketch for the sake of completeness. We shall need the following auxiliary lemma. 
\begin{lema}\label{lema: estimacion de la integral de |x|^a en una bola}
	For a ball $B=B(x_B,R)$ in $\mathbb{R}^n$ and $\alpha>-n$, we have that
	\[\int_B |x|^{\alpha}\,dx\approx R^n\left(\max\{R,|x_B|\}\right)^\alpha.\]
\end{lema}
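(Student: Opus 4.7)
The plan is to split the analysis into two regimes according to whether the ball $B=B(x_B,R)$ lies far from or close to the origin, and treat the two bounds $\lesssim$ and $\gtrsim$ separately in each regime.

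\textbf{Case 1: $|x_B|\geq 2R$.} For every $x\in B$ we have $|x_B|-R\leq |x|\leq |x_B|+R$, and the hypothesis $R\leq |x_B|/2$ yields $|x_B|/2\leq |x|\leq 3|x_B|/2$, so $|x|\approx |x_B|$ uniformly on $B$. Integrating over $B$ gives
\[\int_B |x|^\alpha\,dx\approx |x_B|^\alpha |B|\approx R^n|x_B|^\alpha,\]
and since $\max\{R,|x_B|\}=|x_B|$ in this regime, the conclusion follows.

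\textbf{Case 2: $|x_B|<2R$.} Here $\max\{R,|x_B|\}\approx R$, so we must show $\int_B|x|^\alpha\,dx\approx R^{n+\alpha}$. For the upper bound, $B\subset B(0,3R)$, and passing to polar coordinates
\[\int_B |x|^\alpha\,dx\leq \int_{B(0,3R)}|x|^\alpha\,dx=c_n\int_0^{3R}r^{n+\alpha-1}\,dr=CR^{n+\alpha},\]
which is finite thanks to the hypothesis $\alpha>-n$. For the lower bound I would exhibit a subball $B'\subset B$ of radius comparable to $R$ on which $|x|^\alpha\gtrsim R^\alpha$. When $\alpha\geq 0$ I would choose $B'=B(z,R/4)$ with $z=x_B+\tfrac{R}{2}e$ for a unit vector $e$ chosen so that $|z|\geq R/2$ (taking $e=x_B/|x_B|$ when $x_B\neq 0$, and any unit vector otherwise); on this $B'$ one has $|x|\geq R/4$ and hence $|x|^\alpha\gtrsim R^\alpha$. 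When $\alpha<0$ I would instead take $B'=B(x_B,R/2)\subset B$, on which $|x|\leq |x_B|+R/2\leq 5R/2$ gives $|x|^\alpha\gtrsim R^\alpha$. In either case $\int_B|x|^\alpha\,dx\geq \int_{B'}|x|^\alpha\,dx\gtrsim R^n\cdot R^\alpha=R^{n+\alpha}$.

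The only mild subtlety is the sign-dependent choice of subball in the lower bound for Case 2; once that is handled, the rest is a direct polar-coordinate computation using $\alpha>-n$ for integrability at the origin.
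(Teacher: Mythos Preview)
Your proof is correct. The paper, however, states this lemma as an auxiliary fact without proof, so there is no argument in the paper to compare your approach against. Your two-case split (ball far from the origin versus ball near the origin) together with the sign-dependent choice of subball for the lower bound in Case~2 is the standard way to handle this estimate, and all the details check out: in Case~1 the pointwise equivalence $|x|\approx |x_B|$ is immediate from the triangle inequality, and in Case~2 the upper bound via $B\subset B(0,3R)$ uses $\alpha>-n$ exactly where it is needed, while the lower bounds are obtained from subballs of radius comparable to $R$ on which $|x|^\alpha$ is bounded below by a constant times $R^\alpha$.
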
 

\begin{proof}[Proof of Theorem~\ref{teo: ejemplos para la clase H_m(p,beta,delta)}]
Recall that $\tau=(\beta-mn)(1-1/m)+\delta/m$ is the number appearing in Lemma~\ref{lema: local implica global}, we shall split the proof into the following cases:
\begin{enumerate}[\rm(a)]
	\item \label{item: teo: ejemplos para la clase H_m(p,beta,delta) - item a}$\beta-mn<\tilde\delta<\tau\leq \beta-n/p$;
	\item \label{item: teo: ejemplos para la clase H_m(p,beta,delta) - item b}$\beta-mn<\tilde\delta\leq \beta-n/p<\tau$;
	\item \label{item: teo: ejemplos para la clase H_m(p,beta,delta) - item c}$\beta-mn<\tilde\delta=\tau<\delta<\beta-n/p$; 
	\item \label{item: teo: ejemplos para la clase H_m(p,beta,delta) - item d} $\beta-mn<\tilde\delta=\tau<\beta-n/p<\delta$;
	\item \label{item: teo: ejemplos para la clase H_m(p,beta,delta) - item e} $\tau<\tilde\delta \leq\min\{\delta,\beta-n/p\}$;
	\item \label{item: teo: ejemplos para la clase H_m(p,beta,delta) - item f}$\tilde\delta=\beta-mn$.
\end{enumerate}
Let us prove \eqref{item: teo: ejemplos para la clase H_m(p,beta,delta) - item a}. Recall that $\mathcal{I}_1=\{i: p_i=1\}$, $\mathcal{I}_2=\{i: p_i>1\}$ and let $m_j=\#\mathcal{I}_j$, for $j=1,2$. Since  $m_1<m$ by the restrictions on the parameters, we can take
\[0<\varepsilon<\frac{mn-\beta+\tilde\delta}{m-m_1}.\]
For $1\leq i\leq m$ we define
\[\xi_i=\left\{\begin{array}{ccl}
0 & \textrm{ if } & i\in\mathcal{I}_1,\\
\frac{n}{p_i'}-\varepsilon & \textrm{ if } & i\in\mathcal{I}_2.
\end{array}
\right.
\]
 Let $\rho=\sum_{i=1}^m \xi_i+\tilde\delta-\beta+n/p>0$. Then we take
\[w(x)=|x|^{\rho}\quad\textrm{ and }\quad v_{i}(x)=|x|^{\xi_i}.\]

By virtue of Lemma~\ref{lema: local implica global} it will be enough to show that $(w,\vec{v})$ verifies condition \eqref{eq: condicion local}. Let $B=B(x_B, R)$ and assume that $|x_B|\leq R$. If $i\in\mathcal{I}_2$, by Lemma~\ref{lema: estimacion de la integral de |x|^a en una bola} we get
\begin{equation*}
\left(\frac{1}{|B|} \int_B v_i^{-p'_i}
\right)^{1/{p_i'}}=
\left(\frac{1}{|B|} \int_B |x|^{-\xi_ip'_i}\, dx
\right)^{1/{p_i'}}\approx R^{-\xi_i},
\end{equation*}
and $\left\|v_i^{-1}\mathcal{X}_B\right\|_\infty=1$ for $i\in\mathcal{I}_1$.
On the other hand, $\|w\mathcal{X}_B\|_{\infty}\lesssim R^\rho$
since $\rho>0$. Therefore,  
\[\frac{\|w\mathcal{X}_B\|_{\infty}}{|B|^{\tilde\delta/n-\beta/n+1/p}}\prod_{i\in\mathcal{I}_1}\left\|v_i^{-1}\mathcal{X}_B\right\|_{\infty}\prod_{i\in \mathcal{I}_2}\left(\frac{1}{|B|}\int_B v_i^{-p_i'}\right)^{1/p_i'}\lesssim R^{\rho -\sum_{i=1 }^{m}\xi_i-\tilde\delta +\beta -n/p}\leq C.\]

We now consider the case $|x_B|>R$. We have that 
\begin{equation*}
\|w\mathcal{X}_B\|_{\infty} \lesssim |x_B|^{\rho},
\end{equation*}
whilst for $i\in\mathcal{I}_2$	
\begin{equation*}
\left(\frac{1}{|B|} \int_B |x|^{-\xi_i p'_i}\,dx \right)^{1/{p_i'}} \approx |x_B|^{-\xi_i}.
\end{equation*}
Consequently, since $\tilde\delta<\beta-n/p$ we get
\begin{equation*}
\frac{\|w\mathcal{X}_B\|_{\infty}}{|B|^{\tilde\delta/n-\beta/n+1/p}}\prod_{i\in\mathcal{I}_1}\left\|v_i^{-1}\mathcal{X}_B\right\|_{\infty}\prod_{i\in \mathcal{I}_2}\left(\frac{1}{|B|}\int_B v_i^{-p_i'}\right)^{1/p_i'}\lesssim
|x_B|^{\rho - \sum_{i=1}^m \xi_i -\tilde\delta +\beta -n/p} \leq C,
\end{equation*}
which completes the proof of \eqref{item: teo: ejemplos para la clase H_m(p,beta,delta) - item a}. 

We now prove \eqref{item: teo: ejemplos para la clase H_m(p,beta,delta) - item b}. In this case we take $w=1$ and $v_i=|x|^{\xi_i}$,  $\xi_i=(\beta-\tilde\delta)/m-n/p_i$ for every $1\leq i\leq m$. By Lemma~\ref{lema: local implica global} it will be enough to prove that $(w,\vec{v})$ satisfies condition \eqref{eq: condicion local}. Pick a ball $B=B(x_B,R)$ and assume that $|x_B|\le R$. If $i\in\mathcal{I}_1$ we get $\xi_i<0$, since we are assuming $\tilde\delta>\beta-mn$. In this case we get
\[\left\|v_i^{-1}\mathcal{X}_B\right\|_\infty\approx R^{-\xi_i}.\]
On the other hand, for $i\in\mathcal{I}_2$ we have $\xi_i<n/p_i'$, so Lemma~\ref{lema: estimacion de la integral de |x|^a en una bola} yields
\[\left(\frac{1}{|B|}\int_B v_i^{-p_i'}\right)^{1/p_i'}\approx R^{-\xi_i}.\]
These two estimates imply that
\[\frac{\|w\mathcal{X}_B\|_{\infty}}{|B|^{\tilde\delta/n-\beta/n+1/p}}\prod_{i\in\mathcal{I}_1}\left\|v_i^{-1}\mathcal{X}_B\right\|_{\infty}\prod_{i\in \mathcal{I}_2}\left(\frac{1}{|B|}\int_B v_i^{-p_i'}\right)^{1/p_i'}\lesssim \frac{R^{-\sum_{i=1}^m\xi_i}}{R^{\tilde\delta-\beta+n/p}}=1.\]
If $|x_B|>R$, we have that $\left\|v_i^{-1}\mathcal{X}_B\right\|_{\infty}\lesssim |x_B|^{-\xi_i}$ and also 
\[\left(\frac{1}{|B|}\int_B v_i^{-p_i'}\right)^{1/p_i'}\approx |x_B|^{-\xi_i}\]
by Lemma~\ref{lema: estimacion de la integral de |x|^a en una bola}. Thus
\[\frac{\|w\mathcal{X}_B\|_{\infty}}{|B|^{\tilde\delta/n-\beta/n+1/p}}\prod_{i\in\mathcal{I}_1}\left\|v_i^{-1}\mathcal{X}_B\right\|_{\infty}\prod_{i\in \mathcal{I}_2}\left(\frac{1}{|B|}\int_B v_i^{-p_i'}\right)^{1/p_i'}\lesssim \frac{|x_B|^{-\sum_{i=1}^m\xi_i}}{R^{\tilde\delta-\beta+n/p}}\leq 1,\]
since $\tilde\delta\leq\beta-n/p$. This concludes the proof of item~\eqref{item: teo: ejemplos para la clase H_m(p,beta,delta) - item b}. 

In order to prove \eqref{item: teo: ejemplos para la clase H_m(p,beta,delta) - item c} we pick $(\beta-\tau)/m-n/p_i<\xi_i<n/p_i'$ for $i\in\mathcal{I}_2$ and $\xi_i=0$ for $i\in\mathcal{I}_1$.  We also take $\rho=\sum_{i=1}^m\xi_i+\tau-\beta+n/p>0$ and define $w(x)=|x|^\rho$ and $v_i(x)=|x|^{\xi_i}$, for $1\leq i\leq m$. We first notice that
\[\prod_{i\in\mathcal{I}_1}\left\|\frac{v_i^{-1}\mathcal{X}_{\mathbb{R}^n\backslash B}}{|x_B-\cdot|^{n-\beta/m+\delta/m}}\right\|_\infty\leq R^{-\sum_{i\in\mathcal{I}_1}(n-\beta/m+\delta/m)}.\]

By virtue of Lemma~\ref{lema: equivalencia con local y global} we have to prove that condition \eqref{eq: condicion global} holds. By using the estimate above, it will be enough to show that

\begin{equation}\label{eq: teo: ejemplos para Hbb - item c - eq1}
R^{\delta-\tilde\delta-\sum_{i\in\mathcal{I}_1}(n-\beta/m+\delta/m)}\|w\mathcal{X}_B\|_\infty\prod_{i\in\mathcal{I}_2}\left(\int_{\mathbb{R}^n\backslash B} \frac{v_i^{-p_i'}(y)}{|x_B-y|^{(n-\beta/m+\delta/m)p_i'}}\,dy\right)^{1/p_i'}\leq C
\end{equation}
for every ball $B$. We shall first assume that $|x_B|\leq R$. Let $B_k=B\left(x_B, 2^kR\right)$ for $k\in\mathbb{N}_0$ and $i\in\mathcal{I}_2$. By Lemma~\ref{lema: estimacion de la integral de |x|^a en una bola}, we get \refstepcounter{BPR}\label{pag: estimacion para i fuera de I_1, |x_B|<=R}
\begin{align*}
\left(\int_{\mathbb{R}^n\backslash B} \frac{v_i^{-p_i'}(y)}{|x_B-y|^{(n-\beta/m+\delta/m)p_i'}}\,dy\right)^{1/p_i'}&\lesssim \sum_{k=0}^\infty (2^kR)^{-n+\beta/m-\delta/m}\left(\int_{B_{k+1}\backslash B_k} |y|^{-\xi_ip_i'}\,dy\right)^{1/p_i'}\\
&\lesssim \sum_{k=0}^\infty (2^kR)^{-n+\beta/m-\delta/m-\xi_i+n/p_i'}\\
&\lesssim R^{-n/p_i+\beta/m-\delta/m-\xi_i},
\end{align*}
since $-n/p_i+\beta/m-\delta/m-\xi_i<0$ by the election of $\xi_i$. Since $\tilde\delta =\tau$, the left-hand side of \eqref{eq: teo: ejemplos para Hbb - item c - eq1} is bounded by a multiple constant of
\[R^{\delta-\tilde\delta-\sum_{i\in\mathcal{I}_1}(n-\beta/m+\delta/m)+\rho-\sum_{i\in\mathcal{I}_2}(n/p_i-\beta/m+\delta/m+\xi_i)}= R^{-\tilde\delta-n/p+\beta+\rho-\sum_{i=1}^m\xi_i}= 1.\]

We now assume $|x_B|>R$. There exists a number $N$ such that $2^NR<|x_B|\leq 2^{N+1}R$. If $i\in\mathcal{I}_2$ we have that \refstepcounter{BPR}\label{pag: estimacion del producto para i fuera de I_1, |x_B|>R}
\begin{align*}
\left(\int_{\mathbb{R}^n\backslash B} \frac{v_i^{-p_i'}(y)}{|x_B-y|^{(n-\beta/m+\delta/m)p_i'}}\,dy\right)^{1/p_i'}&\lesssim \sum_{k=0}^\infty(2^{k}R)^{-n+\beta/m-\delta/m}\left(\int_{B_k} |y|^{-\xi_ip_i'}\,dy\right)^{1/p_i'}\\
&=\sum_{k=0}^N+\sum_{k=N+1}^\infty=S_1^i+S_2^i.
\end{align*}
Let $\theta_i=n/p_i+(\delta-\beta)/m$, for $1\leq i\leq m$.  We shall first prove that if $\theta_i<0$, then
\begin{equation}\label{eq: teo: ejemplos para Hbb - item c - eq2}
S_j^i\lesssim|x_B|^{-\xi_i-\theta_i},
\end{equation} 
for $j=1,2$. Indeed, by Lemma~\ref{lema: estimacion de la integral de |x|^a en una bola} we obtain
\begin{align*}
S_1^i&\lesssim \sum_{k=0}^N(2^{k}R)^{-n+\beta/m-\delta/m+n/p_i'}|x_B|^{-\xi_i}\\
&\lesssim |x_B|^{-\xi_i}R^{-\theta_i}\sum_{k=0}^N 2^{-k\theta_i}\\
&\lesssim |x_B|^{-\xi_i}(2^NR)^{-\theta_i}\\
&\lesssim |x_B|^{-\xi_i-\theta_i}.
\end{align*}
 For $S_2^i$ we apply again Lemma~\ref{lema: estimacion de la integral de |x|^a en una bola} in order to get
\begin{align*}
S_2^i&\lesssim \sum_{k=N+1}^\infty\left(2^{k}R\right)^{-n+\beta/m-\delta/m+n/p_i'-\xi_i}\lesssim \sum_{k=N+1}^\infty \left(2^{k}R\right)^{-\xi_i-\theta_i}\\
&= \left(2^{N+1}R\right)^{-\xi_i-\theta_i}\sum_{k=0}^\infty 2^{-k(\xi_i+\theta_i)}\lesssim |x_B|^{-\xi_i-\theta_i},
\end{align*}
since $\theta_i+\xi_i=n/p_i+(\delta-\beta)/m+\xi_i>0$.

We now assume that $\theta_i=0$. By proceeding similarly as in the previous case, we have
\[S_1^i\lesssim |x_B|^{-\xi_i}N\lesssim |x_B|^{-\xi_i}\log_2\left(\frac{|x_B|}{R}\right),\]
and
\[S_2^i\lesssim |x_B|^{-\xi_i}\]
since $\xi_i>0$ when $\theta_i=0$. Consequently,
\begin{equation}\label{eq: teo: ejemplos para Hbb - item c - eq3}
S_1^i+S_2^i\lesssim |x_B|^{-\xi_i}\left(1+\log_2\left(\frac{|x_B|}{R}\right)\right)\lesssim |x_B|^{-\xi_i}\log_2\left(\frac{|x_B|}{R}\right).
\end{equation}

We finally consider the case $\theta_i>0$. For $S_2^i$ we can proceed exactly as in the case $\theta_i<0$ and get the same bound. On the other hand, for $S_1^i$ we have that \refstepcounter{BPR}\label{pag: estimacion de S_1^i y S_2^i,  theta_i>0}
\[S_1^i\lesssim \sum_{k=0}^N(2^{k}R)^{-n+\beta/m-\delta/m+n/p_i'}|x_B|^{-\xi_i}
\lesssim |x_B|^{-\xi_i}R^{-\theta_i}\sum_{k=0}^N 2^{-k\theta_i}
\lesssim |x_B|^{-\xi_i-\theta_i}2^{N\theta_i}.\]
Therefore, if $i\in\mathcal{I}_2$ and $\theta_i>0$ we get
\begin{equation}\label{eq: teo: ejemplos para Hbb - item c - eq4}
S_1^i+S_2^i\lesssim |x_B|^{-\xi_i-\theta_i}\left(1+2^{N\theta_i}\right)\lesssim 2^{N\theta_i}|x_B|^{-\xi_i-\theta_i}. 
\end{equation}
By combining \eqref{eq: teo: ejemplos para Hbb - item c - eq2}, \eqref{eq: teo: ejemplos para Hbb - item c - eq3} and \eqref{eq: teo: ejemplos para Hbb - item c - eq4} we obtain
\begin{align*}
\prod_{i\in\mathcal{I}_2}\left(\int_{\mathbb{R}^n\backslash B} \frac{v_i^{-p_i'}(y)}{|x_B-y|^{(n-\beta/m+\delta/m)p_i'}}\,dy\right)^{1/p_i'}&\lesssim \prod_{i\in\mathcal{I}_2, \theta_i<0} |x_B|^{-\xi_i-\theta_i} \prod_{i\in\mathcal{I}_2, \theta_i=0} |x_B|^{-\xi_i}\log_2\left(\frac{|x_B|}{R}\right) \\
&\quad\times\prod_{i\in\mathcal{I}_2, \theta_i>0} |x_B|^{-\xi_i-\theta_i}\,2^{N\theta_i} \\
&\lesssim |x_B|^{-\sum_{i\in\mathcal{I}_2}(\xi_i+\theta_i)}\,2^{N\sum_{i\in\mathcal{I}_2,\theta_i> 0}\theta_i}\\
&\quad \times\left(\log_2\left(\frac{|x_B|}{R}\right)\right)^{\#\{i\in\mathcal{I}_2, \theta_i=0\}},
\end{align*}
so the left-hand side of \eqref{eq: teo: ejemplos para Hbb - item c - eq1} can be bounded by a multiple constant of
\[R^{\delta-\tilde\delta-(n-\beta/m+\delta/m)m_1}|x_B|^{\rho-\sum_{i\in\mathcal{I}_2}(\xi_i+\theta_i)}\,2^{N\sum_{i\in\mathcal{I}_2,\theta_i> 0}\theta_i}\left(\log_2\left(\frac{|x_B|}{R}\right)\right)^{\#\{i\in\mathcal{I}_2, \theta_i=0\}}\]
which is equivalent to
\begin{equation}\label{eq: teo: ejemplos para Hbb - item c - eq5}
\left(\frac{|x_B|}{R}\right)^{\tau-\delta+(n-\beta/m+\delta/m)m_1+\sum_{i\in\mathcal{I}_2,\theta_i> 0}\theta_i}\left(\log_2\left(\frac{|x_B|}{R}\right)\right)^{\#\{i\in\mathcal{I}_2, \theta_i=0\}}.
\end{equation}
Since $\theta_i<n+(\delta-\beta)/m$ for $i\in\mathcal{I}_2$, there exists $\varepsilon>0$ that verifies \refstepcounter{BPR}\label{pag: estimacion del log_2(|x_B|/R)}
\[\sum_{i\in\mathcal{I}_2,\theta_i> 0}\theta_i+\varepsilon\#\{i\in\mathcal{I}_2, \theta_i=0\}\leq \left(n+\frac{\delta-\beta}{m}\right)\#\{i\in\mathcal{I}_2, \theta_i>0\}.\]
By using the fact that $\log_2 t\lesssim \varepsilon^{-1}t^{\varepsilon}$ for every $t\geq 1$, we can majorize \eqref{eq: teo: ejemplos para Hbb - item c - eq5} by a constant factor provided that
\[\tau-\delta+\left(n+\frac{\delta-\beta}{m}\right)\left(m_1+\#\{i\in\mathcal{I}_2: \theta_i>0\}\right)\leq \tau-\delta+\left(n+\frac{\delta-\beta}{m}\right)(m-1)=0.\]
Indeed, if this last inequality did not hold, then we would have that $\theta_i>0$ for every $i\in\mathcal{I}_2$. We also observe that $\theta_i>0$ for $i\in\mathcal{I}_1$. This would lead to $n/p>\beta-\delta$, a contradiction.

In order to prove \eqref{item: teo: ejemplos para la clase H_m(p,beta,delta) - item d} we only consider two cases. If there exists some $i\in\mathcal{I}_2$ such that $\theta_i\leq 0$, the proof follows exactly as in \eqref{item: teo: ejemplos para la clase H_m(p,beta,delta) - item c}. If not, that is $\theta_i>0$ for every  $i\in\mathcal{I}_2$, observe that
\begin{align*}
\tau-\delta+\left(n+\frac{\delta-\beta}{m}\right)m_1+\sum_{i\in\mathcal{I}_2,\theta_i> 0}\theta_i&=\tau-\delta+\sum_{i=1}^m \left(\frac{n}{p_i}+\frac{\delta-\beta}{m}\right)\\
&=\tau+\frac{n}{p}-\beta\\
&<0,
\end{align*}
then we can choose $\varepsilon>0$ small enough so that the resulting exponent for $|x_B|/R$ in \eqref{eq: teo: ejemplos para Hbb - item c - eq5} becomes negative. 

We now proceed with the proof of \eqref{item: teo: ejemplos para la clase H_m(p,beta,delta) - item e}. Let us first suppose that $\tilde\delta<\min\{\delta,\beta-n/p\}$. We take
$\rho=\tilde\delta-\tau>0$ and $\xi_i=(\delta-\tau)/m-\theta_i$, for every $i$. Then we define $w(x)=|x|^\rho$ and $v_i=|x|^{\xi_i}$, $1\leq i\leq m$. These functions are locally integrable since $\rho>0$ and $\xi_i<n/p_i'$. Furthermore, $\xi_i<0$ for $i\in\mathcal{I}_1$, so $v_i^{-1}\in \mathrm{RH}_\infty$ for these index. Then, by Lemma~\ref{lema: equivalencia con local y global}, it will be enough to show that condition \eqref{eq: condicion global} holds. Fix a ball $B=B(x_B, R)$ and assume that $|x_B|<R$. Then we get
\begin{equation}\label{eq: teo: ejemplos para Hbb - item e - eq1}
\frac{\|w\mathcal{X}_B\|_\infty}{|B|^{(\tilde\delta-\delta)/n}}\lesssim R^{\delta-\tilde\delta+\rho}=R^{\delta-\tau}.
\end{equation}

On the other hand, if $i\in\mathcal{I}_1$ we have
\begin{align*}
\left\|\frac{v_i^{-1}\mathcal{X}_{\mathbb{R}^n\backslash B}}{|x_B-\cdot|^{n-\beta/m+\delta/m}}\right\|_\infty&\lesssim \sum_{k=0}^\infty \left\|\frac{v_i^{-1}\mathcal{X}_{B_{k+1}\backslash B_k}}{|x_B-\cdot|^{n-\beta/m+\delta/m}}\right\|_\infty\\
&\lesssim \sum_{k=0}^\infty \left(2^kR\right)^{-\xi_i-n+\beta/m-\delta/m}\\
&\lesssim   R^{(\tau-\delta)/m},
\end{align*}
since $\tau<\tilde\delta<\delta$. This yields
\begin{equation}\label{eq: teo: ejemplos para Hbb - item e - eq2}
\prod_{i\in\mathcal{I}_1}\left\|\frac{v_i^{-1}\mathcal{X}_{\mathbb{R}^n\backslash B}}{|x_B-\cdot|^{n-\beta/m+\delta/m}}\right\|_\infty\lesssim R^{m_1(\tau-\delta)/m}.
\end{equation}
Finally, since $\xi_i+\theta_i=(\delta-\tau)/m>0$ for $i\in\mathcal{I}_2$, we can proceed as in page~\pageref{pag: estimacion para i fuera de I_1, |x_B|<=R} to obtain
\begin{equation}\label{eq: teo: ejemplos para Hbb - item e - eq3}
\prod_{i\in\mathcal{I}_2}\left(\int_{\mathbb{R}^n\backslash B}\frac{v_i^{-p_i'}(y)}{|x_B-y|^{(n-\beta/m+\delta/m)p_i'}}\,dy\right)^{1/p_i'}\lesssim R^{m_2(\tau-\delta)/m}.
\end{equation}
By combining \eqref{eq: teo: ejemplos para Hbb - item e - eq1}, \eqref{eq: teo: ejemplos para Hbb - item e - eq2} and \eqref{eq: teo: ejemplos para Hbb - item e - eq3}, the left-hand side of \eqref{eq: condicion global} is bounded by a constant $C$.

We now consider the case $|x_B|>R$. We have that
\begin{equation}\label{eq: teo: ejemplos para Hbb - item e - eq4}
\frac{\|w\mathcal{X}_B\|_\infty}{|B|^{(\tilde\delta-\delta)/n}}\lesssim R^{\delta-\tilde\delta}|x_B|^\rho.
\end{equation}
Since $|x_B|>R$, there exists a number $N\in\mathbb{N}$ such that $2^NR<|x_B|\leq 2^{N+1}R$. For $i\in\mathcal{I}_1$ we write
\begin{align*}
\left\|\frac{v_i^{-1}\mathcal{X}_{\mathbb{R}^n\backslash B}}{|x_B-\cdot|^{n-\beta/m+\delta/m}}\right\|_\infty&\lesssim \sum_{k=0}^N \left\|\frac{v_i^{-1}\mathcal{X}_{B_{k+1}\backslash B_k}}{|x_B-\cdot|^{n-\beta/m+\delta/m}}\right\|_\infty+\sum_{k=N+1}^\infty \left\|\frac{v_i^{-1}\mathcal{X}_{B_{k+1}\backslash B_k}}{|x_B-\cdot|^{n-\beta/m+\delta/m}}\right\|_\infty\\
&=S_1^i+S_2^i.
\end{align*}
By proceeding as in page~\pageref{pag: estimacion de S_1^i y S_2^i,  theta_i>0} with $p_i=1$ we have that
\begin{equation}\label{eq: teo: ejemplos para Hbb - item e - eq5}
\prod_{i\in\mathcal{I}_1}\left\|\frac{v_i^{-1}\mathcal{X}_{\mathbb{R}^n\backslash B}}{|x_B-\cdot|^{n-\beta/m+\delta/m}}\right\|_\infty\lesssim |x_B|^{-\sum_{i\in\mathcal{I}_1}(\xi_i+\theta_i)}2^{N\sum_{i\in\mathcal{I}_1}\theta_i}.
\end{equation}
Finally, if $i\in\mathcal{I}_2$ our choice of $\xi_i$ allows us to follow the argument given in page~\pageref{pag: estimacion del producto para i fuera de I_1, |x_B|>R} to conclude that
\begin{align*}
\prod_{i\in\mathcal{I}_2}\left(\int_{\mathbb{R}^n\backslash B} \frac{v_i^{-p_i'}(y)}{|x_B-y|^{(n-\beta/m+\delta/m)p_i'}}\,dy\right)^{1/p_i'}
&\lesssim |x_B|^{-\sum_{i\in\mathcal{I}_2}(\xi_i+\theta_i)}2^{N\sum_{i\in\mathcal{I}_2,\theta_i> 0}\theta_i}\\
&\quad \times\left(\log_2\left(\frac{|x_B|}{R}\right)\right)^{\#\{i\in\mathcal{I}_2, \theta_i=0\}}.
\end{align*}
By combining the inequality above with \eqref{eq: teo: ejemplos para Hbb - item e - eq4} and \eqref{eq: teo: ejemplos para Hbb - item e - eq5}, the left-hand side of \eqref{eq: condicion global} can be bounded by a multiple constant of
\[R^{\delta-\tilde\delta} |x_B|^{\rho-\sum_{i=1}^m(\theta_i+\xi_i)}2^{N\sum_{i:\theta_i> 0}\theta_i}
\left(\log_2\left(\frac{|x_B|}{R}\right)\right)^{\#\{i\in\mathcal{I}_2, \theta_i=0\}}\]
which is equal to
\[\left(\frac{R}{|x_B|}\right)^{\delta-\tilde\delta-\sum_{i: \theta_i>0}\theta_i}\left(\log_2\left(\frac{|x_B|}{R}\right)\right)^{\#\{i\in\mathcal{I}_2, \theta_i=0\}}.\]
If $\theta_i<0$ for every $i$ then the exponent of $R/|x_B|$ is positive. On the other hand, if $\theta_i\geq 0$ for every $i$ then
\[\delta-\tilde\delta-\sum_{i: \theta_i>0}\theta_i=\delta-\tilde\delta-\sum_{i=1}^m \theta_i=\delta-\tilde\delta-\frac{n}{p}+\beta-\delta>0,\]
since $\tilde\delta<\beta-n/p$. In both cases we can repeat the argument given in page~\pageref{pag: estimacion del log_2(|x_B|/R)} to conclude that $(w,\vec{v})$ belongs to $\mathbb{H}_m(\vec{p},\beta,\tilde\delta)$. Let us observe that, for example, if $\beta\leq\delta$ then every $\theta_i$ is nonnegative.

If $\tilde\delta=\delta<\beta-n/p$ or $\tilde\delta=\beta-n/p<\delta$ the same estimation as above works when we take $\theta_i<0$ for every $i$. The second case also works when $\theta_i>0$ for every $i$.

We finish with the proof of item~\eqref{item: teo: ejemplos para la clase H_m(p,beta,delta) - item f}. In this case we fix $\rho>0$ and take $w(x)=\left(1+|x|^\rho\right)^{-m_1}$. If $g_i$ are nonnegative fixed functions in $L^{p_i'}(\mathbb{R}^n)$ for $i\in\mathcal{I}_2$, we define
\[v_i(x)=\left\{
\begin{array}{ccl}
e^{|x|}&\textrm{ if }& i\in\mathcal{I}_1,\\
g_i^{-1}&\textrm{ if }& i\in\mathcal{I}_2.
\end{array}
\right.\] 
Fix a ball $B=B(x_B, R)$. It is enough to check condition \eqref{eq: condicion local}, since $\tilde\delta=\beta-mn<\tau$. Notice that 
\[\left\|w\mathcal{X}_B\right\|_\infty\prod_{i\in\mathcal{I}_1}\left\|v_i^{-1}\mathcal{X}_B\right\|_\infty\leq \prod_{i\in\mathcal{I}_1}\left\|(1+|\cdot|^{\rho})^{-1}\mathcal{X}_B\right\|_\infty\left\|e^{-|\cdot|}\mathcal{X}_B\right\|_\infty\leq 1.\]
Therefore,
\begin{align*}
\frac{\|w\mathcal{X}_B\|_{\infty}}{|B|^{\tilde\delta/n-\beta/n+1/p}}\prod_{i\in\mathcal{I}_1}\left\|v_i^{-1}\mathcal{X}_B\right\|_{\infty}\prod_{i\in \mathcal{I}_2}\left(\frac{1}{|B|}\int_B v_i^{-p_i'}\right)^{1/p_i'}
&\lesssim \prod_{i\in\mathcal{I}_2}\|g_i\|_{p_i'},
\end{align*}
for every ball $B$. This concludes the proof of \eqref{item: teo: ejemplos para la clase H_m(p,beta,delta) - item f}.
\end{proof}


\def\cprime{$'$}
\providecommand{\bysame}{\leavevmode\hbox to3em{\hrulefill}\thinspace}
\providecommand{\MR}{\relax\ifhmode\unskip\space\fi MR }
\providecommand{\MRhref}[2]{%
  \href{http://www.ams.org/mathscinet-getitem?mr=#1}{#2}
}
\providecommand{\href}[2]{#2}

\end{document}